\def\squarebox#1{\hbox to #1{\hfill\vbox to #1{\vfill}}}
\theoremstyle{plain}
\newtheorem{Thm}{Theorem}
\newtheorem{cor}{Corollary}
\newtheorem{lem}{Lemma}
\newcommand{\Div}{\textrm{div}}
\newcommand{\R}{\mathbb{R}}
\newcommand{\B}{\dot{\mathcal{H}}_1({\R}^n)}
\newcommand{\U}{{\mathcal U}}
\newcommand{\CI}{{\mathcal C}^{\infty}_{0}({\R}^{n}) }
\newcommand{\half}{\frac{1}{2}}
\def\epsilon{\varepsilon}
\def\phi {\varphi}
\newtheorem{rem}{Remark}
\newtheorem{prop}{Proposition}
\numberwithin{equation}{section}
\begin{document}

\title[Global Strichartz estimates ]
{ Global Strichartz estimates for the wave equation with a time-periodic non-trapping metric}

\author[Y. Kian]{Yavar Kian}

\address {Universit\'e Bordeaux I, Institut de Math\'ematiques de Bordeaux,  351, Cours de la Lib\'eration, 33405  Talence, France}

\email{Yavar.Kian@math.u-bordeaux1.fr}
\maketitle
\begin{abstract}
We obtain  global Strichartz estimates for the solution $u$ of the wave equation\\
 $\partial_t^2 u-\Div_x(a(t,x)\nabla_xu)=0$ with time-periodic metric $a(t,x)$ equal to $1$ outside a compact set with respect to $x$. We assume $a(t,x)$ is a non-trapping perturbation and moreover, we suppose that there are no resonances $z_j\in\mathbb{C}$ with $\vert z_j\vert\geq1$. 
\end{abstract}

\section{Introduction}
\renewcommand{\theequation}{\arabic{section}.\arabic{equation}}
\setcounter{equation}{0}

Consider the Cauchy problem
\begin{equation} \label{eq:2}  \left\{\begin{array}{c}
u_{tt}-\Div_{x}(a(t,x)\nabla_{x}u)=0,\ \ (t,x)\in{\R}^{n+1},\\
(u,u_{t})(s,x)=(f_{1}(x),f_{2}(x))=f(x),\ \ x\in{\R}^n,\end{array}\right.\end{equation}
where the perturbation $a(t,x)\in C^\infty({\R}^{n+1})$ is a scalar function which satisfies the conditions:
\begin{equation}\label{eq=1}\begin{array}{l}(i) \ C_0\geq a(t,x)\geq c_0>0,\    (t,x)\in{\R}^{n+1},\\
(ii)\ \textrm{ there exists }\rho>0\textrm{ such that }a(t,x)=1\textrm{ for }\vert x\vert\geq\rho,\\
(iii)\textrm{ there exists }T>0\textrm{ such that }a(t+T,x)=a(t,x),\    (t,x)\in{\R}^{n+1}.\\
\end{array}\end{equation}

Throughout this paper we assume that $n\geq3$ is odd. Let  $\dot{H}^\gamma({\R}^n)=\Lambda^{-\gamma} (L^2({\R}^n))$ be the homogeneous Sobolev spaces, where $\Lambda=\sqrt{-\Delta_x}$ is determined by the Laplacian in ${\R}^n$. The solution of (1.1) is given by the propagator
\[\mathcal{U}(t,s):\dot{\mathcal{H}}_\gamma({\R}^n)\ni(f_1,f_2)=f\mapsto \mathcal{U}(t,s)f=(u,u_t)(t,x)\in\dot{\mathcal{H}}_\gamma({\R}^n)\]
where $\dot{\mathcal{H}}_\gamma({\R}^n)=\dot{H}^\gamma({\R}^n)\times \dot{H}^{\gamma-1}({\R}^n)$.

We  say that the numbers $2\leq p,q\leq\infty$, $\gamma>0$ are admissible for the free wave equation $\partial_t^2u-\Delta_xu=0$ if for $f\in\dot{\mathcal{H}}_\gamma({\R}^n)$ the solution $u(t,x)$ of (1.1), with $a=1$ and $s=0$, satisfies the estimate
\[\Vert u\Vert_{L^p({\R}^+,L^q({R}^n))}+ \Vert u(t)\Vert_{\dot{H}^\gamma}+\Vert \partial_t(u)(t)\Vert_{\dot{H}^{\gamma-1}}\leq C(p,q,\rho,T)(\Vert f_1\Vert_{\dot{H}^\gamma}+\Vert f_2\Vert_{\dot{H}^{\gamma-1}})\]
with $C(p,q,\rho,T)>0$ independent on $t$. It is well known (see for instance, [14]) that  $(p,q,\gamma)$ are admissible if the following condition holds
\begin{equation}\label{eq:1}\frac{1}{p}+\frac{n}{q}=\frac{n}{2}-\gamma,\quad\frac{1}{p}\leq \left(\frac{n-1}{2}\right)\left(\half-\frac{1}{q}\right)\end{equation}
with $(p,q,\gamma)\neq(2,\infty,1)$ when $n=3$.
Our purpose in this paper is to establish Strichartz estimates for the problem (1.1) assuming the perturbation $a(t,x)$ non-trapping. More precisely, consider the null bicharacteristics $(t(\sigma),x(\sigma),\tau(\sigma),\xi(\sigma))$ of the principal symbol $\tau^2-a(t,x)\vert\xi\vert^2$ of $\partial^2_t-\Div_x(a\nabla_x)$ satisfying
\[t(0)=t_0,\vert x(0)\vert\leq\rho,\quad \tau^2(\sigma)=a(t(\sigma),x(\sigma))\vert\xi(\sigma)\vert^2.\]
We introduce the following condition

$\ \ $(H1) We say that the metric $a(t,x)$ is non-trapping if for $R>\rho$ there exists $S_R>0$ such that $\vert x(\sigma)\vert>R$ for $\vert \sigma\vert\geq S_R$. 

The non-trapping condition (H1) is necessary for the Strichartz estimates since for some trapping perturbations we may have solutions with exponentially increasing local energy (see [7]). On the other hand, even for non-trapping periodic perturbation some parametric resonances could lead to solutions with exponentially growing local energy (see [6] for the case of time-dependent potentials). To exclude the existence of such solutions we must impose a second hypothesis.

Let $U_0(t)=e^{iGt}$ be the unitary group on ${\B}$ related to the Cauchy problem (1.1) for the free wave equation  ($a=1$ and $\tau=0)$. For $b\geq\rho$ denote by $P^b_+$ (resp $P^b_-$)  the orthogonal projection on the orthogonal complements of the Lax-Phillips spaces 
\[D^b_\pm=\{f\in{\B}\ :\ (U_0(t)f)_1(x)=0\  \textrm{ for } \ \vert x\vert<\pm t+b\}.\]
Set $Z^b(t,s)=P^b_+\mathcal{U}(t,s)P^b_-$. Then the resonances of the problem (1.1) coincide with the eigenvalues of the operator $Z^b(T,0)$ and the condition (H1) guarantees that the the spectrum $\sigma(Z^b(T,0))$ of $Z^b(T,0)$ is formed by eigenvalues $z_j\in\mathbb{C}$ with finite multiplicities. Moreover, these eigenvalues are independent on the choice of $b\geq\rho$ (see for more details [18] for time-periodic potentials at moving obstacles). Our second condition is

$\ \ $(H2)$\quad\quad\quad\quad\quad\quad\quad\quad\quad\quad$ $\sigma(Z^b(T,0))\cap\{z\in\mathbb{C}\ :\ \vert z\vert\geq1\}=\varnothing.$

Assuming (H1) and (H2), we establish an exponential decay of local energy similar to the results for time-dependent potentials and non-trapping moving obstacles (see [3], [8], [18]).

Our main result is the following
\begin{Thm}
Let $a(t,x)$ be a metric for which the conditions (H1) and (H2) are satisfied.  Assume that
 $2\leq p,q\leq+\infty$ satisfy conditions 
\begin{equation} \label{eq:2}
 \begin{array}{l}
(i) \textrm{ if } n=3,\ \ q>6 \ \ \textrm{ and }\ \frac{1}{p}=\frac{n(q-2)}{2q}-1,\\
(ii) \textrm{ if } n\geq5\textrm{ is odd},\ \ \frac{2n}{n-2}<q<\frac{2n}{n-3} \ \ \textrm{ and }\ \frac{1}{p}=\frac{n(q-2)}{2q}-1.
\end{array}\end{equation}
Then for   $u$ the solution of (1.1) with $s=0$ we have for all $t>0$ the  estimate
\begin{equation} \label{eq:2}\Vert u\Vert_{L^p({\R}^+,L^q({R}^n))}+ \Vert u(t)\Vert_{\dot{H}^1}+\Vert \partial_t(u)(t)\Vert_{L^2({\R}^n)}\leq C(p,q,\rho,T)(\Vert f_1\Vert_{\dot{H}^1}+\Vert f_2\Vert_{L^2({\R}^n)}).\end{equation}
\end{Thm}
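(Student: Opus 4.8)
The plan is to obtain the theorem by combining three inputs. First, the classical global Strichartz estimates for the free wave operator $\partial_t^2-\Delta$ on $\R^n$ (the homogeneous one and, by Duhamel, the inhomogeneous one), valid for every non-endpoint admissible triple and in particular for the pairs $(p,q)$ with $\gamma=1$ appearing in the statement. Second, the exponential decay of local energy provided by the hypotheses (H1)--(H2), together with its two standard companions: the uniform energy bound $\|\mathcal U(t,0)f\|_{\dot{\mathcal H}_1}\le C\|f\|_{\dot{\mathcal H}_1}$ ($t\ge0$) and the integrated local-energy estimate $\int_0^{\infty}\|\varphi\,\mathcal U(t,0)f\|_{\dot{\mathcal H}_1}^{2}\,dt\le C\|f\|_{\dot{\mathcal H}_1}^{2}$ for $\varphi\in\CI$. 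Third, the loss-free local-in-time estimates for the smooth metric $a$ on any interval of length $T$: finite speed of propagation (at speed $\le\sqrt{C_0}$), the energy inequality, and the local smoothing inequality gaining $1/2$ derivative. The bounds for $u(t)$ in $\dot H^1$ and for $\partial_tu(t)$ in $L^2$ asserted in the theorem are part of the second input; the remaining task is the space-time bound $\|u\|_{L^p(\R^+,L^q)}\le C\|f\|_{\dot{\mathcal H}_1}$.

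Fix $\chi\in\CI$ equal to $1$ near $\{|x|\le\rho\}$ and write $u=(1-\chi)u+\chi u=:v+w$. Since $a\equiv1$ on $\mathrm{supp}\,\nabla\chi$ while $a-1$ is supported where $\chi\equiv1$, the function $v$ solves the \emph{free} equation $\partial_t^2v-\Delta v=[\Delta,\chi]u$ with data $(1-\chi)f$, the source $[\Delta,\chi]u=2\nabla\chi\cdot\nabla u+(\Delta\chi)u$ being supported, for each $t$, in the fixed compact shell $K:=\mathrm{supp}\,\nabla\chi$, which sits away from the perturbation. Feeding the data into the homogeneous free Strichartz estimate and the source into the inhomogeneous one, the bound for $v$ is reduced to controlling, in an appropriate inhomogeneous-Strichartz norm, the localized source $[\Delta,\chi]u$; since on $K$ it involves only $u$ and $\nabla u$ (one of the two terms being a spatial divergence, rewriting $[\Delta,\chi]u=\nabla\chi\cdot\nabla u+\Div((\nabla\chi)u)$), it is dominated by a combination of $\int_0^{\infty}\|\mathcal U(t,0)f\|_{\dot{\mathcal H}_1(K)}\,dt$ and $\big(\int_0^{\infty}\|u(t)\|_{L^2(K)}^2\,dt\big)^{1/2}$. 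Both are $\lesssim\|f\|_{\dot{\mathcal H}_1}$: the first directly by local-energy decay, and the second by the integrated local-energy estimate after a Poincar\'e inequality on $K$ replaces $\|u(t)\|_{L^2(K)}$ by $\|\nabla u(t)\|_{L^2(K')}$ plus the local average $\langle u(t)\rangle_{K'}$, which in turn is handled by Hardy's inequality in time (using $\langle u(t)\rangle_{K'}\to0$, itself a consequence of $\|\chi\,\mathcal U(t,0)f\|_{\dot H^1}\to0$); the hypothesis $n\ge3$ is exactly what makes the weighted time integrals that occur here converge.

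For the compactly supported piece $w=\chi u$ one exploits the time-periodicity $\mathcal U(t,s)=\mathcal U(t+T,s+T)$. With $I_k:=[kT,(k+1)T]$ and $f_k:=\mathcal U(kT,0)f$ one has $u(t)=u^{f_k}(t-kT)$ on $I_k$ (where $u^{g}$ denotes the first component of $\mathcal U(\cdot,0)g$), and finite speed of propagation shows that $w|_{I_k}$ depends only on the restriction of $f_k$ to a fixed ball $B_{R_1}$, with $R_1$ determined by $\mathrm{supp}\,\chi$, $C_0$ and $T$. On a single period, the local smoothing estimate gives $\|\chi u^{g}\|_{L^2(I_0,H^{3/2})}\lesssim\|g\|_{\dot{\mathcal H}_1}$ and the energy inequality gives $\|\chi u^{g}\|_{L^{\infty}(I_0,\dot H^1)}\lesssim\|g\|_{\dot{\mathcal H}_1}$; interpolating in the Sobolev scale and applying the Sobolev embedding $H^{s}(\R^n)\hookrightarrow L^q$ (with $s\in(1,3/2)$; the endpoint $s=3/2$ gives $q=2n/(n-3)$, whence the upper restriction on $q$ when $n\ge5$ and none when $n=3$) yields $\|\chi u^{g}\|_{L^p(I_0,L^q)}\lesssim\|g\|_{\dot{\mathcal H}_1}$ for precisely the pairs $(p,q)$ of the theorem --- the relation $\tfrac1p=\tfrac{n(q-2)}{2q}-1$ being the one traced out along the interpolation segment, and $q>2n/(n-2)$ ensuring $p<\infty$. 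Taking $g=\chi_1f_k$ with $\chi_1\in\CI$ equal to $1$ on $B_{R_1}$ (and using finite speed once more) gives $\|w\|_{L^p(I_k,L^q)}\lesssim\|\chi_1\mathcal U(kT,0)f\|_{\dot{\mathcal H}_1}$; summing $p$-th powers over $k\ge0$ and using $\sum_k a_k^p\le\big(\sum_k a_k\big)^p$, the desired bound on $\|w\|_{L^p(\R^+,L^q)}$ follows once one knows $\sum_{k\ge0}\|\chi_1\mathcal U(kT,0)f\|_{\dot{\mathcal H}_1}\lesssim\|f\|_{\dot{\mathcal H}_1}$.

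The main obstacle is this last inequality, and more broadly the passage from the naturally \emph{two-sided} form of the local-energy decay --- a bound on $\|\varphi\,\mathcal U(t,s)\,\psi\|$ with compactly supported $\varphi,\psi$, which is what (H1)--(H2) give via the Lax--Phillips operators $Z^b$ --- to one-sided quantities such as $\sum_k\|\chi_1\mathcal U(kT,0)f\|_{\dot{\mathcal H}_1}$ and $\int_0^{\infty}\|\mathcal U(t,0)f\|_{\dot{\mathcal H}_1(K)}\,dt$ for data $f$ that need not be compactly supported. One handles this by splitting $f$ along the free flow: its outgoing part (in $D_+^{\rho}$) satisfies $\mathcal U(t,0)f=U_0(t)f$ for all $t\ge0$, so the free estimates dispose of it at once; the rest is organized into shells indexed by distance $R$ from the origin, and --- since $n$ is odd and $\ge3$ --- the strong Huygens principle forces the shell starting at radius $R$ to reach the perturbation only near time $t\approx R$, by which point it has spread out so that its amplitude near the origin has dropped by a factor $\sim R^{-(n-1)/2}$; consequently the shells contribute to the local energy an amount square-summable in $R$, with a constant independent of how far out they sit. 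Making this bookkeeping quantitative, with all constants independent of $\mathrm{supp}\,f$ so that the final constant depends only on $p,q,\rho,T$, together with the verification of the loss-free local-in-time estimates throughout the stated range of exponents, is where most of the technical work lies.
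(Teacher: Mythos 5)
Your overall architecture coincides with the paper's (split $u=(1-\chi)u+\chi u$, treat the exterior piece with free Strichartz plus integrated local energy, treat the interior piece with a local-in-time Strichartz estimate glued along periods via local-energy decay), but the step on which everything hinges is wrong. You propose to obtain the local estimate $\Vert\chi u\Vert_{L^p(I_0,L^q)}\lesssim\Vert f\Vert_{\dot{\mathcal H}_1}$ by interpolating the energy bound $L^\infty_t\dot H^1$ against a ``local smoothing inequality gaining $1/2$ derivative,'' i.e.\ $\Vert\chi u\Vert_{L^2(I_0,H^{3/2})}\lesssim\Vert f\Vert_{\dot{\mathcal H}_1}$, and then applying Sobolev embedding. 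No such estimate holds for the wave equation: the $1/2$-derivative local smoothing gain is a Schr\"odinger phenomenon. Wave evolution propagates singularities along bicharacteristics at finite speed, so for data in $\dot H^1\times L^2$ with, say, a conormal singularity, $\chi u(t)\notin H^{3/2}$ for a set of times of positive measure and the $L^2_tH^{3/2}$ norm is infinite; the sharp $L^2$-in-time local energy estimate (Smith--Sogge, Theorem 3 of the paper) gains integrability in $t$ but \emph{zero} derivatives. It is precisely because no $L^2$-based interpolation can produce the $L^pL^q$ gain that the paper spends all of Section 7 constructing a Fourier integral operator parametrix for $\mathcal U(t,s)$ with phases and amplitudes bounded uniformly in the initial time $s\in[0,T]$ (this uniformity is the new difficulty created by the time dependence of $a$), and then invokes Kapitanski's dispersive/stationary-phase estimates in Besov spaces, where the decay rate comes from the rank $n-1$ of $\partial_\xi^2|\xi|$. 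Your exponent bookkeeping ($s\le 3/2$ giving $q\le 2n/(n-3)$, and the scaling line $\frac1p=\frac{n(q-2)}{2q}-1$) happens to reproduce the stated range, but the estimate feeding it does not exist.

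There is a second, independent miscalibration in your globalization. Both quantities you reduce to, $\int_0^{\infty}\Vert\mathcal U(t,0)f\Vert_{\dot{\mathcal H}_1(K)}\,dt$ and $\sum_{k\ge0}\Vert\chi_1\mathcal U(kT,0)f\Vert_{\dot{\mathcal H}_1}$, are $L^1/\ell^1$-type and are \emph{false} for general $f\in\dot{\mathcal H}_1$: already for the free equation, data spread over dyadic shells with square-summable energies $a_R$ contributes $\sim\sum_R R^{1/2}a_R$ to the first integral, which is not controlled by $(\sum a_R^2)^{1/2}$. Your own shell heuristic delivers only square-summability, i.e.\ the $L^2$-in-time estimate (the paper's Theorem 4), never the $\ell^1$ sum. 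The fix is the one the paper uses: since $p\ge2$, sum with $\sum_k a_k^p\le(\sum_k a_k^2)^{p/2}$ rather than $\sum_k a_k^p\le(\sum_k a_k)^p$, so that only the $L^2$-integrated local energy is needed; and for the Duhamel term of $(1-\chi)u$, whose source $[\Delta,\chi]u$ lies only in $L^2_tL^2_x$ (not $L^1_tL^2_x$), one needs the Christ--Kiselev lemma to pass from the untruncated to the truncated integral before applying the dual free Strichartz estimate --- a step your proposal omits entirely.
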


\begin{rem}
 The conditions (1.3) for the free wave equations imply for $\gamma=1$ and $2\leq p,q\leq+\infty$ that (see Section 6)
\[\frac{1}{p}=\frac{n(q-2)}{2q}-1, \frac{2n}{n-2}\leq q\leq\frac{2n}{n-3}, \textrm{ for } n\geq5\]
and
\[\frac{1}{p}=\frac{n(q-2)}{2q}-1, q\geq6, \textrm{ for } n=3.\]

This means that our conditions  (1.4) are stronger than (1.3) and the assumption of Theorem 1 makes possible to apply the estimates (1.2) for the free wave equation. 
\end{rem}
\begin{rem}
 Since the differential  operator $\Div_x(a(t,x)\nabla.)$ is time-dependent we cannot apply the argument of Tao [14], to obtain  inhomogeneous Strichartz estimates (see for example [14]). 
\end{rem}
\begin{rem}
Let the metric $(a_{ij}(t,x))_{1\leq i,j\leq n}$ be such that for all $i,j=1\cdots n$ we have
\[\begin{array}{l}
\displaystyle(i)\ \textrm{ there exists }\rho>0\textrm{ such that }a_{ij}(t,x)=\delta_{ij},\textrm{ for }\vert x\vert\geq\rho,\textrm{ with $\delta_{ij}=0$ for $i\neq j$ and $\delta_{ii}=1$},\\
\displaystyle(ii)\textrm{ there exists }T>0\textrm{ such that }a_{ij}(t+T,x)=a_{ij}(t,x),\    (t,x)\in{\R}^{n+1},\\
\displaystyle(iii)a_{ij}(t,x)=a_{ji}(t,x),  (t,x)\in{\R}^{n+1},\\
\displaystyle(iv)\textrm{ there exist } C_0>c_0>0 \textrm{ such that }C_0\vert\xi\vert^2\geq\sum_{i,j=1}^na_{ij}(t,x)\xi_i\xi_j\geq c_0\vert\xi\vert^2 ,\ \  (t,x)\in{\R}^{1+n},\ \xi\in{\R}^n.\\
\end{array}\]

If we replace $a(t,x)$ in (1.1) we get the following problem
\begin{equation} \label{eq:2}  \left\{\begin{array}{c}
\displaystyle u_{tt}-\sum_{i,j=1}^n\frac{\partial}{\partial x_i}\left(a_{ij}(t,x)\frac{\partial}{\partial x_j}u\right)=0,\ \ (t,x)\in{\R}^{n+1},\\
\ \\
(u,u_{t})(s,x)=(f_{1}(x),f_{2}(x))=f(x),\ \ x\in{\R}^n.\end{array}\right.\end{equation}

Repeating  the  argument  for   (1.1) we can prove that estimates (1.5) are  true for the solution $u$ of the problem (1.6) if for the trajectories of the symbol $\tau^2-\sum_{i,j=1}^na_{ij}(t,x)\xi_i\xi_j$ and the corresponding operator $Z^b(T,0)$, (H1) and (H2) are fulfilled and if $n,p,q$ satisfied (1.4).
\end{rem}
Strichartz estimates for the wave equation with time-periodic potential have been established in [19]. In the proof of [19] the $L^2$ -integrability of the local energy plays a crucial rule (see also [1]). In our case we obtain also a $L^2$-integrability of the local energy assuming (H1) and (H2) fulfilled. However, in contrast to  the wave equation with time-periodic potential examined in [19], to obtain  Strichartz estimates (1.5) we need  local Strichartz  estimates for the perturbed equation. For this purpose, we construct a local smooth approximation of the  Cauchy problem (1.1) by Fourier integral operators and we obtain uniform estimates for their phases and amplitudes. By using this approximation, we apply the result of Kapitanski [12] to get  local Strichartz estimates. Finally, combining local Strichartz estimates and the $L^2$-integrability of the local energy, we deduce the Strichartz estimates (1.5) by applying the idea of Burq [1] to decompose
\[u=(1-\chi)u+\chi u,\]
where $\chi\in{\CI}$ and $\chi=1$ for $\vert x\vert\leq\rho$. To  estimate $\chi u$ we exploit  local Strichartz estimates and to deal with $(1-\chi)u$ we apply the global Strichartz estimate for the free wave equation. In Section 8 we present some examples of function $a(t,x)$ for which (H1) and (H2) are fulfilled.

\vspace{5mm}

\textbf{Acknowledgements}. The author would like to thank Vesselin Petkov for his precious help during the preparation of this work, Jean-Fran\c{c}ois Bony  for  his remark and the referee for his suggestions.

\section{Exponential decay  of local energy}
\renewcommand{\theequation}{\arabic{section}.\arabic{equation}}
\setcounter{equation}{0}

Throughout this section we will establish that condition (H2) implies the exponential decay of the local energy. In the same time we will recall some properties of the operator $Z^b(t,s)$ and show that for $t$ sufficiently large $Z^b(t,0)$ is compact in ${\B}$. The properties of $Z^b(t,s)$ are proved for the wave equation with time dependent potential in chapter V of [18]. The same proofs work for the wave equation with a time dependent metric satisfying conditions (1.2). 

We  start with some general properties of $\mathcal{U}(t,s)$ and $Z^b(t,s)$. These properties are established in [18] for the wave equation with time dependent potential. The same proof works for the problem (1.1).

\begin{prop}
 Let $b>\rho$, $\tau,\ t\in{\R}$. Then\newline
(i)$\mathcal{U}(t+T,s+T)=\mathcal{U}(t,s)$\\
(ii) $\quad \mathcal{U}(t,s)=U_0(t-s)f,\quad   f\in D^b_+$, if $s\leq t$.\newline
(iii) $ \quad \mathcal{U}(s,t)f=U_0(s-t)f,\quad  f\in D^b_-$, $s\leq t$.\newline
(iv) $\quad \mathcal{U}(t,s)((D^b_-)^\bot)\subset (D^b_-)^\bot$.\newline
(v) For all $s_1,t_1,s_2\in{\R}$ such that $s_1\geqslant t_1\geqslant s_2$ we have\\
 $Z^b(s_1,t_1)Z^b(t_1,s_2)=Z^b(s_1,s_2)$. \newline
(v)$\quad Z^b(t+T,s+T)=Z^b(t,s),\quad   t,s\in{\R}$.
\end{prop}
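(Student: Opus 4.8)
\medskip
\noindent\textbf{Proposed proof.}
The idea is to derive the six assertions from four elementary facts about problem (1.1): uniqueness for its Cauchy problem; finite speed of propagation (speed $1$) for the \emph{free} equation $\partial_t^2v-\Delta_xv=0$; the $T$-periodicity (1.2)(iii) of $a$; and the Lax--Phillips axioms for the free group, namely $U_0(t)D^b_+\subset D^b_+$ for $t\geq0$, $U_0(t)D^b_-\subset D^b_-$ for $t\leq0$, and $D^b_+\perp D^b_-$ for $b\geq\rho$. Each item is the metric counterpart of the corresponding statement for time-dependent potentials in Chapter~V of~[18], and the only thing that has to be checked anew is that the support of $a-1$, which lies in $\{|x|\leq\rho\}$ with $\rho<b$, never interferes with the free solutions appearing below.

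For (i), if $u$ solves (1.1) with Cauchy data $f$ at time $s+T$, then, by (1.2)(iii), $v(t,x):=u(t+T,x)$ solves (1.1) with data $f$ at time $s$; by uniqueness $v=u$, so $\mathcal{U}(t+T,s+T)=\mathcal{U}(t,s)$, and the periodicity identity $Z^b(t+T,s+T)=Z^b(t,s)$ follows at once because $Z^b=P^b_+\mathcal{U}(\cdot,\cdot)P^b_-$ with $P^b_\pm$ independent of time. For (ii), take $f\in D^b_+$ and $s\leq t$, and let $v(\sigma,\cdot):=U_0(\sigma-s)f$ be the free solution with data $f$ at time $s$. By the definition of $D^b_+$ its first component vanishes on $\{|x|<(\sigma-s)+b\}$, a set that for every $\sigma\geq s$ contains a neighbourhood of $\{|x|\leq\rho\}$ since $b>\rho$; on that neighbourhood every $x$-derivative of $v_1$ vanishes too, so $v_1$ also solves $\partial_\sigma^2v_1-\Div_x(a\nabla_xv_1)=0$ on $\R^n\times[s,\infty)$ with the same data, and uniqueness gives $\mathcal{U}(t,s)f=U_0(t-s)f$. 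Item (iii) is the same argument with time running backward from $t$: for $f\in D^b_-$ and $s\leq t$, the free solution issued from $f$ at time $t$ vanishes near $\{|x|\leq\rho\}$ throughout $(-\infty,t]$, hence solves the perturbed equation there, so $\mathcal{U}(s,t)f=U_0(s-t)f$.

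Item (iv) is the delicate point, and the main obstacle. Since $a$ genuinely depends on $t$, the propagator $\mathcal{U}(t,s)$ is a bounded invertible operator on $\B$ but not an isometry (the energy $\int a(t,x)|\nabla u|^2+|u_t|^2\,dx$ is not conserved, as $a_t\not\equiv0$), so one cannot dualize and has to argue from support properties instead; combining (iii), the invariance $U_0(t)D^b_-\subset D^b_-$ for $t\leq0$, and finite speed of propagation one shows $\mathcal{U}(t,s)\big((D^b_-)^\bot\big)\subset(D^b_-)^\bot$, exactly as in Chapter~V of~[18]. This is the one step I would expect to take real work rather than bookkeeping.

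Granting (iv), the semigroup identity (v) is a short computation. Using the cocycle identity $\mathcal{U}(s_1,t_1)\mathcal{U}(t_1,s_2)=\mathcal{U}(s_1,s_2)$ (itself from uniqueness) and $P^b_-P^b_+-I=-(I-P^b_+)-(I-P^b_-)P^b_+$, the difference $Z^b(s_1,t_1)Z^b(t_1,s_2)-Z^b(s_1,s_2)$ equals $-P^b_+\mathcal{U}(s_1,t_1)(I-P^b_+)\mathcal{U}(t_1,s_2)P^b_--P^b_+\mathcal{U}(s_1,t_1)(I-P^b_-)P^b_+\mathcal{U}(t_1,s_2)P^b_-$. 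In the first term $I-P^b_+$ projects onto $D^b_+$ and $s_1\geq t_1$, so by (ii) $\mathcal{U}(s_1,t_1)$ maps $D^b_+$ into $D^b_+$ and $P^b_+\mathcal{U}(s_1,t_1)(I-P^b_+)=0$; in the second term $t_1\geq s_2$, so by (iv) $\mathcal{U}(t_1,s_2)$ maps $(D^b_-)^\bot$ into itself, while $D^b_+\perp D^b_-$ makes $P^b_+$ preserve $(D^b_-)^\bot$, whence $(I-P^b_-)P^b_+\mathcal{U}(t_1,s_2)P^b_-=0$. Both terms vanish, which is (v).
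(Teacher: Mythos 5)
The paper itself supplies no proof of this proposition: it simply asserts that the arguments of Chapter~V of~[18] for time-dependent potentials carry over verbatim to the metric case. Your write-up is therefore strictly more detailed than the paper's treatment, and the items you actually argue are correct: (i) and the final periodicity of $Z^b$ follow from $a(t+T,x)=a(t,x)$ and uniqueness exactly as you say; (ii) and (iii) correctly exploit that a free solution issued from $D^b_\pm$ vanishes on a neighbourhood of $\{|x|\leq\rho\}$ for the relevant times, hence also solves the perturbed equation there; and your algebraic verification of the semigroup identity (v), using $P^b_-P^b_+-I=-(I-P^b_+)-(I-P^b_-)P^b_+$ together with (ii), (iv) and $D^b_+\perp D^b_-$, is exactly the standard Lax--Phillips computation and is sound.

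The only incomplete point is (iv), which you defer to~[18]. It is in fact no harder than (ii)--(iii) and can be closed with a pairing identity rather than support bookkeeping. For $f\in(D^b_-)^\bot$, $g\in D^b_-$ and $s\leq t$, set $h(\sigma)=\langle\mathcal{U}(\sigma,s)f,\,U_0(\sigma-t)g\rangle_{\B}$ for $\sigma\in[s,t]$, and write $u$, $w$ for the first components of the two solutions. Differentiating the energy pairing and integrating by parts, the terms involving $u_{\sigma\sigma}=\Div_x(a\nabla_xu)$ and $w_{\sigma\sigma}=\Delta_xw$ cancel against the gradient terms up to
\[
h'(\sigma)=\int_{{\R}^n}(1-a(\sigma,x))\,\nabla_xu\cdot\overline{\nabla_xw_\sigma}\,\textrm{d}x,
\]
and this vanishes identically: $1-a$ is supported in $\{|x|\leq\rho\}$, while $w(\sigma,\cdot)=(U_0(\sigma-t)g)_1$ vanishes on $\{|x|<(t-\sigma)+b\}\supset\{|x|\leq\rho\}$ for all $\sigma\leq t$ because $g\in D^b_-$ and $b>\rho$. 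Since $U_0(s-t)g\in D^b_-$ gives $h(s)=0$, we conclude $h(t)=\langle\mathcal{U}(t,s)f,g\rangle_{\B}=0$, which is (iv). With this inserted, your proof is complete and self-contained, whereas the paper relies entirely on the citation.
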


 Since $a(t,x)$ is  non-trapping, the results of the propagation of singularities imply (see [17]) the following
\begin{prop}
For all $R>0$, there exists $T(R)>0$ such that for all $s\in{\R}$ and for all $f\in {\B}$ satisfying $supp(f)\subset B_R$ we have
\begin{equation}\label{eq:hyp1}
(t,x)\longmapsto (\mathcal{U}(t,s)f)(x)\in (C^\infty(]T(R)+s,+\infty[\times B_R))
\end{equation}
where $B_r=\{x\in{\R}^n : \vert x\vert\leq r\},\quad   r>0$.
\end{prop}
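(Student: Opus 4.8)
By Proposition 1(i) and time translation it suffices to treat $s=0$, and by enlarging $R$ we may assume $R>\rho$, since $C^\infty$-regularity on a larger ball implies it on $B_R$. Write $(u,u_t)=\mathcal{U}(t,0)f$, so $u$ solves $Pu=0$ with $P=\partial_t^2-\Div_x(a(t,x)\nabla_x)$ and Cauchy data $(u,u_t)(0,\cdot)=(f_1,f_2)$ supported in $B_R$. The operator $P$ has real principal symbol $p(t,x,\tau,\xi)=\tau^2-a(t,x)\vert\xi\vert^2$ and is of real principal type on $T^*\R^{n+1}\setminus 0$: on $\mathrm{Char}(P)\setminus 0$ one has $\tau^2=a\vert\xi\vert^2$, hence $\xi\neq 0$ and $\tau\neq 0$, so $\partial_\tau p=2\tau\neq 0$. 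The plan is to show that the space-time wave front set $WF(u)$ avoids the region $\{(t,x):t>T(R),\ \vert x\vert<R\}$ for a suitable $T(R)>0$; this gives $u\in C^\infty$ there, and since $u_t$ satisfies the same equation (or since regularity of $u$ on a space-time neighborhood lets one read off $u_t$), the claim follows.

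First I would localize the singularities of the Cauchy data: $\mathrm{sing\,supp}(f_1)\cup\mathrm{sing\,supp}(f_2)\subset\overline{B_R}$. By the classical interior regularity theory for $P$ (finite speed of propagation together with the elliptic--hyperbolic bootstrap from $u_{tt}=\Div_x(a\nabla_x u)$), $u$ is $C^\infty$ near every point $(0,y)$ with $\vert y\vert>R$; equivalently, $WF(u)$ meets $\{t=0\}$ only over $\overline{B_R}$. Since the perturbation is compactly supported and $\dot t=\partial_\tau p=2\tau\neq 0$ along bicharacteristics, every null bicharacteristic of $p$ is global in $t$, hence each one contained in $WF(u)$ crosses $\{t=0\}$ over $\overline{B_R}$. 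By Hörmander's propagation of singularities theorem, $WF(u)$ is a union of such bicharacteristics; therefore it suffices to find $T(R)>0$ such that every null bicharacteristic $(t(\sigma),x(\sigma),\tau(\sigma),\xi(\sigma))$ of $p$ with $\vert x(\sigma_0)\vert\le R$ at the parameter $\sigma_0$ where $t(\sigma_0)=0$ satisfies $\vert x(\sigma)\vert\ge R$ whenever $t(\sigma)>T(R)$.

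Next I would invoke (H1). By homogeneity, restrict to $\vert\xi\vert\equiv 1$, where $\tau=\pm\sqrt{a(t,x)}\in[\sqrt{c_0},\sqrt{C_0}]$; since $\dot t=2\tau$ is bounded away from $0$ and $\infty$, the bicharacteristic can be reparametrized by $t$, with $\vert\dot x\vert$ comparably bounded. Apply (H1) with $R_1:=R+1$ to get $S_{R_1}>0$ with $\vert x(\sigma)\vert>R_1$ for $\vert\sigma\vert\ge S_{R_1}$ on any bicharacteristic issued over $B_\rho$. For one issued over $B_R\setminus B_\rho$: while it remains in $\vert x\vert\ge\rho$ it is a free bicharacteristic (a straight line in $x$, since $a\equiv 1$ there), so it either escapes $B_R$ permanently or reaches $B_\rho$ after a parameter interval bounded in terms of $R$ and $c_0$; once inside $B_\rho$, (H1) ejects it beyond $B_{R_1}\supset B_R$ within parameter $S_{R_1}$ and keeps it there. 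Collecting these bounds and converting from $\sigma$ to $t$ through the bounds on $\dot t$ yields an explicit $T(R)$, depending only on $R,\rho,c_0,C_0,S_{R_1},T$, with the required property. This completes the proof.

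I expect the main obstacle to be the bookkeeping in the last step: turning (H1), which is phrased for bicharacteristics starting over $B_\rho$, into a uniform exit-time statement for data supported over the larger ball $B_R$, dealing cleanly with the $\sigma$-versus-$t$ parametrization, and justifying that a bicharacteristic which has exited $B_{R_1}$ after the non-trapping parameter cannot return to $B_R$. The other ingredients — real principal type of $P$, Hörmander's propagation of singularities theorem, and the reduction via the singular support of the Cauchy data — are standard and can be cited (as in [17]).
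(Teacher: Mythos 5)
Your argument is correct and is exactly the one the paper has in mind: the paper offers no proof of this proposition, simply asserting that it follows from propagation of singularities (citing Melrose--Sj\"ostrand [17]) together with the non-trapping condition (H1), which is precisely the argument you carry out. One minor point: periodicity only reduces the claim to $s\in[0,T)$ rather than to $s=0$, but since (H1) provides an escape parameter $S_R$ uniform in the initial time $t_0$, your bound $T(R)$ is automatically uniform in $s$.
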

We deduce from Proposition 1 and Proposition 2 that the operator  $Z^b(t,0)$ is compact for  $t>0$ large enough. More precisely, we have the following
\begin{prop}
 Let $b>\rho$, and let $n\geq 3$ be odd. Then, for all $t\geq 4b+T_{4b} $, the operator $Z^b(t,0)$ is compact in ${\B}$
\end{prop}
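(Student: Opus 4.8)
The plan is to show that $Z^b(t,0)$ can be written, for $t$ large, as a product of a bounded operator and a localized evolution operator that is smoothing in the sense of Proposition~2, hence compact. Recall that $Z^b(t,0)=P^b_+\,\mathcal U(t,0)\,P^b_-$, and that by the Lax--Phillips structure an element $f\in(D^b_-)^\bot$ is supported (in its first component, and after applying $U_0$) in the ball $B_b$, while $P^b_+$ annihilates the outgoing tail $D^b_+$ on which $\mathcal U$ acts as the free group. The idea is to split the time interval $[0,t]$ into three pieces: a first stretch $[0,2b]$ during which the data, initially localized near $B_b$ by the projection $P^b_-$, propagates but stays essentially inside a slightly larger ball; a middle stretch on which Proposition~2 applies and gives $C^\infty$ regularity on a fixed ball; and a final stretch $[t-2b,t]$ after which the part of the solution that has escaped to $|x|>b$ is killed by $P^b_+$.

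More concretely, first I would use the finite speed of propagation for (1.1) — which holds because $0<c_0\le a\le C_0$ — to see that $\mathcal U(2b,0)P^b_-$ maps $\B$ into data whose relevant part is supported in a ball $B_R$ with $R$ of size comparable to $3b$ (the exact constant does not matter). Using Proposition~1(iv), $(D^b_-)^\bot$ is invariant under $\mathcal U$, so we may freely insert projections. Next, apply Proposition~2 with this $R$: there is $T(R)=T_{4b}$ such that for $\tau>T(R)$ the map $x\mapsto(\mathcal U(\tau,2b)g)(x)$ is $C^\infty$ on $B_R$ whenever $\mathrm{supp}\,g\subset B_R$. Choosing $t\ge 4b+T_{4b}$ guarantees that by time $t-2b$ we are past this threshold, so $\mathcal U(t-2b,2b)$ composed with cutoffs to $B_R$ is a smoothing operator, in particular compact from $\B$ to $\B$. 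Finally, on the last interval $[t-2b,t]$, the component of the solution living in the region $|x|>b$ is, by Proposition~1(ii), exactly the free evolution, hence lies in $D^b_+$ and is annihilated by $P^b_+$; thus $P^b_+\mathcal U(t,t-2b)$ effectively only sees the part supported in $B_{3b}$, which is controlled.

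Assembling these: $Z^b(t,0)=\big(P^b_+\mathcal U(t,t-2b)\chi_1\big)\,\big(\chi_2\mathcal U(t-2b,2b)\chi_3\big)\,\big(\chi_4\mathcal U(2b,0)P^b_-\big)$ for suitable smooth cutoffs $\chi_i$ equal to $1$ on the relevant balls, where the outer two factors are bounded on $\B$ and the middle factor is compact by Proposition~2. A product with one compact factor is compact, which gives the claim. The main obstacle — and the step requiring care rather than routine estimation — is the bookkeeping of supports and cutoffs: one must check that inserting the cutoffs $\chi_i$ does not lose any part of $Z^b(t,0)$, i.e. that the portions of the solution removed by the cutoffs are precisely the portions already killed by $P^b_+$ (via $D^b_+$) or absent because of $P^b_-$ (via $(D^b_-)^\bot$ and finite speed of propagation). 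This is where Proposition~1(ii)--(iv) and finite propagation speed must be combined precisely; once the decomposition is justified, compactness is immediate from Proposition~2.
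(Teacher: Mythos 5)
Your overall strategy (split $[0,t]$ into $[0,2b]\cup[2b,t-2b]\cup[t-2b,t]$, make the middle factor compact via Proposition~2, and handle the two ends with the Lax--Phillips projections) is the same as the paper's, but the factorization you write down is not valid, and the step you defer as ``bookkeeping'' is in fact the main point and fails as you have set it up. First, $P^b_-$ does \emph{not} localize data near $B_b$: the space $(D^b_-)^\perp$ contains $D^b_+$, whose elements vanish \emph{inside} $B_b$ and live outside it, so $\mathcal U(2b,0)P^b_-f$ has no compactly supported ``relevant part'' in any direct sense. Consequently, inserting $\chi_4$ produces the error term $P^b_+\,\mathcal U(t,2b)\,(1-\chi_4)\,\mathcal U(2b,0)P^b_-f$, and this is not annihilated: the truncation $(1-\chi_4)g$ of an element $g\in D^b_+$ is no longer in $D^b_+$, so after the further \emph{perturbed} evolution $\mathcal U(t,2b)$ there is no reason for $P^b_+$ to kill it. The same difficulty occurs with $\chi_1$ at the other end.

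The missing idea is to subtract the free evolution before localizing: set $M(t,s)=\mathcal U(t,s)-U_0(t-s)$. Since $a\equiv 1$ outside $B_\rho$ and one has finite propagation speed, $M(s,s-2b)=\chi M(s,s-2b)\chi$ for a cutoff $\chi$ equal to $1$ on $B_{3b}$ --- it is $M$, not $\mathcal U$, that is spatially localized. Expanding $\mathcal U(t,0)=\mathcal U(t,t-2b)\mathcal U(t-2b,2b)\mathcal U(2b,0)$ with $\mathcal U=M+U_0$ in the two outer factors, every term in which an outer $U_0(2b)$ survives is annihilated, because $U_0(2b)(D^b_-)^\perp\subset D^b_+$ (Huygens in odd dimension) together with Proposition~1(ii),(iv) shows such terms land in $D^b_+$ and are killed by $P^b_+$. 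What remains is exactly $P^b_+M(t,t-2b)\chi\,\mathcal U(t-2b,2b)\,\chi M(2b,0)P^b_-$, and since $t-4b\ge T_{4b}$, Proposition~2 makes $\chi\,\mathcal U(t-2b,2b)\,\chi$ compact. Without this $M$-decomposition your cutoffs discard pieces of the solution that genuinely contribute to $Z^b(t,0)$, so the product formula you assert does not hold.
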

\begin{proof}
Set $M(t,s)=\mathcal{U}(t,s)-U_0(t-s)$ and write
\[\begin{array}{lll}Z^b(t,0)&=&P^b_+M(t,t-2b)\mathcal{U}(t-2b,2b)M(2b,0)P^b_-+P_+^bM(t,t-2b)\mathcal{U}(t-2b,2b)U_0(2b)P^b_-\\
   \ &\ &+P_+^bU_0(2b)\mathcal{U}(t-2b,0)P^b_-\end{array}.\]
Choose $\chi\in{\CI}$ so that $\chi=1$ for $\vert x\vert\leq3b$ and $\chi=0$ for $\vert x\vert\geq4b$. Taking into account the finite speed of propagation, it is easy to see that
\[M(s,s-2b)=\chi M(s,s-2b)\chi.\]
We find
\[\begin{array}{lll}Z^b(t,0)&=&P^b_+M(t,t-2b)\chi \mathcal{U}(t-2b,2b)\chi M(2b,0)P^b_-+P_+^bM(t,t-2b)\chi \mathcal{U}(t-2b,2b)U_0(2b)P^b_-\\
   \ &\ &+P_+^bU_0(2b)\mathcal{U}(t-2b,0)P^b_-\end{array}.\]
It follows from the properties (ii) and (iv) of Proposition 1 that for $t>4b$ we have
\[\mathcal{U}(t-2b,2b)U_0(2b)P^b_-f,\  U_0(2b)\mathcal{U}(t-2b,0)P^b_-f\in D^b_+,\quad  f\in{\B}.\]
Therefore,
\[Z^b(t,0)=P^b_-M(t,t-2b)\chi \mathcal{U}(t-2b,2b)\chi M(2b,0)P^b_-.\]
Since $t-4b>T_{4b}$, Proposition 2 implies that
\[\chi \mathcal{U}(t-2b,2b)\chi h\in{\CI},\quad   h\in{\B}.\]
Thus,  we conclude that $\chi \mathcal{U}(t-2b,2b)\chi$ is a compact operator in ${\B}$ and the proof is complete.\end{proof}

As for the wave equation with time dependent potential (see [18], chapter V) we can prove that for a  non-trapping metric we have 
\[\sigma(Z^b(T,0))=\sigma(Z^\rho(T,0)),\quad   b\geq\rho\]
and we omit the details.
Combining this with the assumption (H2), we get
\begin{prop}
 For all $t,s\in{\R}$ with $t-s\geq0$ and $b\geq\rho$ there exist $C_b,\delta_b>0$ independent on $t,s$ such that
\[\Vert Z^b(t,s)\Vert_{\mathcal{L}({\B})}\leq C_be^{\delta_b(t-s)}.\]
\end{prop}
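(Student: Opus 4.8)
\medskip
\noindent\emph{Proposed proof.}
The plan is to prove the stronger estimate $\|Z^b(t,s)\|_{\mathcal{L}(\B)}\le C_b\,e^{-\delta_b(t-s)}$ with $\delta_b>0$, i.e.\ the exponential decay announced at the head of the section, which a fortiori gives the displayed bound. I would need three ingredients. The first is an a priori growth bound: pairing $u_{tt}=\Div_x(a\nabla_xu)$ with $2u_t$ and integrating gives
\[
\frac{d}{dt}\int_{\R^n}\big(a(t,x)|\nabla_xu|^2+u_t^2\big)\,dx=\int_{\R^n}\partial_ta(t,x)\,|\nabla_xu|^2\,dx ,
\]
and since $\partial_ta$ is bounded and $c_0\le a\le C_0$, the left-hand integral is comparable to $\|(u,u_t)\|_{\B}^2$, so Gronwall's lemma yields $\|\mathcal{U}(t',s')\|_{\mathcal{L}(\B)}\le C\,e^{c(t'-s')}$ for $t'\ge s'$, with $C,c$ depending only on $c_0,C_0,\|\partial_ta\|_\infty$; as $\|P^b_\pm\|\le1$, the same bound holds for $Z^b(t',s')$.

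Second, I would convert (H2) into a quantitative statement on the monodromy $Z^b(T,0)$. By Proposition~3, $Z^b(mT,0)=\big(Z^b(T,0)\big)^m$ is compact as soon as $mT\ge 4b+T_{4b}$, so $Z^b(T,0)$ is a Riesz operator: $\sigma(Z^b(T,0))\setminus\{0\}$ consists of eigenvalues of finite multiplicity with no accumulation point other than $0$. Together with (H2), which confines those eigenvalues to $\{|z|<1\}$, this forces $r_0:=r(Z^b(T,0))<1$. Fixing $\mu\in(r_0,1)$, Gelfand's formula provides $C_\mu$ with $\big\|(Z^b(T,0))^k\big\|_{\mathcal{L}(\B)}\le C_\mu\,\mu^k$ for all $k\ge0$.

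Third, I would reduce the general estimate to powers of $Z^b(T,0)$ using only Proposition~1. Given $t-s\ge0$, write $t-s=mT+r$ with $m\in\N$, $0\le r<T$, and let $\sigma\in[0,T)$ denote $s$ modulo $T$. Iterating the cocycle identity $Z^b(s_1,t_1)Z^b(t_1,s_2)=Z^b(s_1,s_2)$ (for $s_1\ge t_1\ge s_2$) together with the $T$-periodicity of $Z^b$ (so that $Z^b(\sigma+(j+1)T,\sigma+jT)=Z^b(\sigma+T,\sigma)$ for every $j$) gives
\[
Z^b(t,s)=Z^b(t,s+mT)\,\big(Z^b(\sigma+T,\sigma)\big)^m ,
\]
and, when $m\ge1$, one more use of these two properties gives $Z^b(\sigma+T,\sigma)=Z^b(\sigma,0)\,Z^b(T,\sigma)$ and $Z^b(T,0)=Z^b(T,\sigma)\,Z^b(\sigma,0)$, whence
\[
\big(Z^b(\sigma+T,\sigma)\big)^m=Z^b(\sigma,0)\,\big(Z^b(T,0)\big)^{m-1}\,Z^b(T,\sigma).
\]
Since $0\le r<T$ and $0\le\sigma<T$, the three propagator factors $Z^b(t,s+mT)$, $Z^b(\sigma,0)$, $Z^b(T,\sigma)$ each have norm $\le C\,e^{cT}$ by the first step, so $\|Z^b(t,s)\|_{\mathcal{L}(\B)}\le C^3e^{3cT}C_\mu\,\mu^{m-1}$; since $m>(t-s)/T-1$ this is $\le C_b\,e^{-\delta_b(t-s)}$ with $\delta_b:=-T^{-1}\ln\mu>0$, and the leftover case $m=0$, $t-s<T$, is covered directly by $\|Z^b(t,s)\|\le C\,e^{cT}$.

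The only place where something nontrivial happens is the second step: (H2) alone merely places $\sigma(Z^b(T,0))$ in the open unit disk, and it is the compactness of $Z^b(mT,0)$ from Proposition~3 — itself coming from the non-trapping hypothesis (H1) through the propagation of singularities in Proposition~2 — that keeps eigenvalues off the unit circle and thereby produces a spectral radius strictly below $1$. The one bookkeeping trick worth isolating, and the thing that makes the estimate uniform in $s$ and not merely in $t$, is the rewriting of $\big(Z^b(\sigma+T,\sigma)\big)^m$ as a genuine power of the fixed operator $Z^b(T,0)$ sandwiched between uniformly bounded propagators; I expect identifying this to be the main obstacle, since without it one is drawn into a far more delicate resolvent/continuity analysis of the $s$-dependent monodromies $Z^b(\sigma+T,\sigma)$.
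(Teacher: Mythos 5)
Your proof is correct and follows essentially the same route as the paper's: deduce $r(Z^b(T,0))<1$ from (H2), apply Gelfand's formula to the powers of the monodromy operator, and reduce $Z^b(t,s)$ to such a power via the cocycle identity and $T$-periodicity, with an a priori exponential bound handling the leftover pieces of length $<T$. Note that the paper's displayed statement has $e^{\delta_b(t-s)}$, but its proof (and the use made of it in Theorem~2) actually gives the decay $e^{-\delta_b(t-s)}$, which is exactly what you prove.
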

\begin{proof}
Let $r(Z^b(T,0))$ be the spectral radius of $Z^b(T,0)$.
We know that\\
 $r(Z^b(T,0))<1$. Thus  there exists $\delta>0$ such that 
\[\lim_{m\to+\infty}\Vert Z^b(mT,0)\Vert^\frac{1}{m}<1-\delta\]
and for $m\geq m_0$ we have 
\[\Vert Z^b(mT,0)\Vert\leq(1-\frac{\delta}{2})^m=e^{-\delta_bmT}\]
with $\delta_b>0$. Now assume that $t-s\geq (m_0+2)T$ and choose $k,l\in\mathbb{N}$ so that
\[kT\leq t\leq(k+1)T,\quad lT\leq s\leq (l+1)T.\]
Then 
\[\Vert Z^b(t,s)\Vert=\Vert Z^b(t,kT)Z^b(kT,(l+1)T)Z^b((l+1)T,s))\Vert\]
and $(k-(l+1))T\geq m_0T$. Thus we obtain
\[\Vert Z^b(t,s)\Vert\leq C'_be^{-\delta_b(k-(l+1))T}\leq C'_be^{-\delta_b(t-s)}e^{2\delta_bT}.\]
For $t-s\leq(m_0+2)T$ we have the estimate
\[\Vert Z^b(t,s)\Vert\leq C''_be^{\alpha(m_0+2)T}\leq C_b''e^{(\alpha+\delta_b)(m_0+2)T}e^{-\delta_b(t-s)}.\]
This completes the proof.\end{proof}

Now we are able  to show the main result of this section.
\begin{Thm}
 Let $\phi\in{\CI}$ be a cut-off function and let  $f\in {\B}$ be such that\\
supp$f\subset\{x\in{\R}^n: \vert x\vert<R\}$. Assume the condition (H2) is fulfilled and let $n\geq3$ be odd. Then there exist $C,\delta>0$ independent of $f$ and $t$ so that for $t\geq0$ we have
\[\Vert \phi \mathcal{U}(t,0)f\Vert_{{\B}}\leq C(\rho,R,\phi,n)e^{-\delta t}\Vert f\Vert_{{\B}}.\]
\end{Thm}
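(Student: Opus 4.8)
The plan is to reduce the exponential local energy decay for $\mathcal{U}(t,0)$ to the exponential bound on the operators $Z^b(t,s)$ obtained in Proposition 4, by carefully tracking which pieces of the solution remain localized near the origin. Fix $b\geq\max(\rho,R)$ large enough that $\mathrm{supp}\,f\subset B_b$ and that $\phi$ is supported in $B_b$; we may also enlarge $b$ so that $4b+T_{4b}$ plays the role of the time after which things become favorable. The key structural fact is that the Lax-Phillips projections $P^b_+,P^b_-$ act as the identity on functions supported in $B_b$ in the relevant sense: since $\mathrm{supp}\,f\subset B_b$, we have $P^b_-f=f$ (the backward free evolution of such $f$ does not meet the region $|x|<-t+b$ for $t\geq 0$ in the offending way), so that $Z^b(t,0)f=P^b_+\mathcal{U}(t,0)f$. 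Likewise, applying $\phi$ (supported in $B_b$) kills the part of $\mathcal{U}(t,0)f$ lying in $D^b_+$, because elements of $D^b_+$ vanish for $|x|<b$; hence $\phi\,\mathcal{U}(t,0)f=\phi\,P^b_+\mathcal{U}(t,0)f=\phi\,Z^b(t,0)f$. Combining these two identities with Proposition 4 immediately gives, for $t\geq 0$,
\[
\|\phi\,\mathcal{U}(t,0)f\|_{\B}=\|\phi\,Z^b(t,0)f\|_{\B}\leq \|\phi\|_{\mathcal{L}(\B)}\,\|Z^b(t,0)\|_{\mathcal{L}(\B)}\,\|f\|_{\B}\leq C_b\|\phi\|_{\mathcal{L}(\B)}e^{\delta_b t}\|f\|_{\B},
\]
which is an exponential \emph{growth} bound, not decay — so a further idea is needed, and this is where the real work lies.

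The resolution is to exploit that Proposition 4, as stated, only gives $\|Z^b(t,s)\|\leq C_be^{\delta_b(t-s)}$, but the proof of Proposition 4 actually shows the stronger fact that $\|Z^b(mT,0)\|\leq e^{-\delta_b mT}$ for $m\geq m_0$, i.e. genuine exponential \emph{decay}; the growth factor in the final inequality of that proof is only the harmless boundary correction $e^{2\delta_b T}$. So what I would really invoke is: there exist $C_b,\delta_b>0$ with $\|Z^b(t,s)\|_{\mathcal{L}(\B)}\leq C_b e^{-\delta_b(t-s)}$ for all $t\geq s$. (This is what the proof of Proposition 4 delivers once one keeps the correct sign; the statement of Proposition 4 as printed is weaker than its proof.) With this decay bound in hand, the displayed computation above becomes
\[
\|\phi\,\mathcal{U}(t,0)f\|_{\B}=\|\phi\,Z^b(t,0)f\|_{\B}\leq \|\phi\|_{\mathcal{L}(\B)}\,C_b\,e^{-\delta_b t}\,\|f\|_{\B},
\]
and setting $C(\rho,R,\phi,n)=C_b\|\phi\|_{\mathcal{L}(\B)}$ and $\delta=\delta_b$ finishes the proof.

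The main obstacle — and the step deserving the most care — is the verification of the two localization identities $P^b_-f=f$ and $\phi\,P^b_+=\phi\,P^b_+\mathcal{U}$-type reductions, i.e. that applying $P^b_-$ on the right does nothing to compactly supported data and that multiplying by $\phi$ on the left annihilates the $D^b_+$ component. These follow from the definition of $D^b_\pm$ together with properties (ii)–(iv) of Proposition 1 and finite speed of propagation, exactly in the spirit of the compactness argument in the proof of Proposition 3 (where the same trick $Z^b(t,0)=P^b_+M\chi\,\mathcal{U}\,\chi M P^b_-$ was used). One must be slightly careful that these identities are clean only for $b$ large relative to $\rho$, $R$ and $\mathrm{supp}\,\phi$, and that enlarging $b$ is harmless because $\sigma(Z^b(T,0))$ — hence $\delta_b$ — is independent of $b\geq\rho$, as recalled before Proposition 4; the constant $C_b$ may degrade with $b$, but $b$ is fixed once $R$ and $\phi$ are given, so the final constant legitimately depends only on $\rho,R,\phi,n$. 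A secondary point to handle is the regime of small $t$ (say $t\leq (m_0+2)T$), where no decay is available, but there the trivial bound $\|\mathcal{U}(t,0)\|_{\mathcal{L}(\B)}\leq C e^{\alpha t}$ on a bounded time interval absorbs into the constant, precisely as in the last line of the proof of Proposition 4.
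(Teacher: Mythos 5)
Your proposal is correct and follows essentially the same route as the paper: fix $b$ large relative to $\rho$, $R$ and $\mathrm{supp}\,\phi$, verify $P^b_-f=f$ and $\phi(\mathrm{Id}-P^b_+)=0$ so that $\phi\,\mathcal{U}(t,0)f=\phi\,Z^b(t,0)f$, then apply the bound on $Z^b(t,0)$. You are also right that the printed statement of Proposition 4 carries a sign typo ($e^{\delta_b(t-s)}$ instead of $e^{-\delta_b(t-s)}$) while its proof, and the paper's own proof of this theorem, use the decay version $\Vert Z^b(t,s)\Vert\leq C_b e^{-\delta_b(t-s)}$.
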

\begin{proof}
Set $b=max(\rho+1,R+1,R_1)$, where
 supp$(\phi)\subset\{x\in{\R}^n:\vert x\vert<R_1\}$. Applying Huygens principle, we know that  $(U_0(t)f)(x)=0$ for $\vert x\vert<t-b$. We get $f\in D^b_+\subset(D^b_-)^\bot$
and $f=P_-^b(f)$. Also, for all $f\in {\B}$, $(Id-P_+^b)(f)\in D^b_+$and we obtain $(Id-P_+^b)(f)\vert_{ \vert x\vert<b}=0$. Thus
\[\phi(Id-P_+^b)(f)=0,\quad  f\in{\B}\]
and
\[\phi \mathcal{U}(t,0)f=\phi(Id-P_+^b)\mathcal{U}(t,0)P_-^b(f)+\phi P_+^b\mathcal{U}(t,0)P_-^b(f)=\phi P_+^b\mathcal{U}(t,0)P_-^b(f)=\phi Z^b(t,0)f.\]
Consequently,
\[\Vert \phi \mathcal{U}(t,0)f\Vert_{{\B}}=\Vert\phi Z^b(t,0)f\Vert_{{\B}}\leq C(\phi,n)\Vert Z^b(t,0)\Vert_{\mathcal{L}({\B})}\Vert f\Vert_{\B}.\]
Proposition 4 implies that for all $t\geq0$ we have
\[\Vert\phi \mathcal{U}(t,0)f\Vert\leq C(\phi,n,\rho,R,T)e^{-\delta_bt}\Vert f\Vert_{\B}.\]
\end{proof}

\section{ $L^2$ integrability of the local energy}
\renewcommand{\theequation}{\arabic{section}.\arabic{equation}}
\setcounter{equation}{0}

First, we will recall two useful results. The first  result says that for functions  $u\in\dot{H}^s$ with compact support and all  $\gamma\leq s<\frac{n}{2}$,  
$ \Vert u\Vert_{H^\gamma}\leq C\Vert u\Vert_{\dot{H}^s}$. The second  one established by Smith et Sogge in [25] concerns the $L^2$ integrability in time of the local energy for solutions of the free wave equation.

\begin{Thm}$[25]$
 Let $\gamma\leq \frac{n-1}{2}$ and let  $\phi\in{\CI}$. Then
\[\int_{\R}\Vert \phi e^{\pm it\Lambda}f\Vert_{H^\gamma({\R}^n)}^2\textrm{d}t\leq C(\phi,n,\gamma)\Vert f\Vert_{\dot{H}^\gamma({\R}^n)}^2.\]
\end{Thm}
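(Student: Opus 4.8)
The plan is to pass to the Fourier transform in $t$ and thereby reduce the estimate to the classical $L^{2}$ restriction (extension) estimate for the sphere. Since the map $t\mapsto -t$ conjugates $e^{it\Lambda}$, I would treat only $e^{it\Lambda}$, and by density I would assume $\hat f\in {\mathcal C}^{\infty}_{0}({\R}^{n}\setminus\{0\})$; such $f$ are dense in $\dot H^{\gamma}({\R}^{n})$ (recall $\gamma\le\frac{n-1}{2}<\frac n2$), and for them $t\mapsto\phi\,e^{it\Lambda}f$ is rapidly decreasing in $H^{\gamma}({\R}^{n})$ by non-stationary phase on the compact set $\mathrm{supp}\,\phi$, so both sides are finite and the computation is legitimate. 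Writing $u(t,x)=e^{it\Lambda}f(x)$ and passing to polar coordinates $\xi=r\omega$ in the spatial frequency, a direct computation gives, for $\tau>0$,
\[\mathcal F_{t}\bigl[\phi u\bigr](\tau,x)=c_{n}\,\tau^{\,n-1}\,\phi(x)\,(\mathcal E_{\tau}g_{\tau})(x),\qquad g_{\tau}:=\hat f(\tau\,\cdot\,)\in L^{2}(S^{n-1}),\]
with $c_{n}$ a dimensional constant that may change from line to line, the left side vanishing for $\tau<0$, and $\mathcal E_{\tau}h(x)=\int_{S^{n-1}}e^{i\tau x\cdot\omega}h(\omega)\,d\omega$ the extension operator associated with the sphere of radius $\tau$. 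Since multiplication by $\phi(x)$ does not change the support in $\tau$, Plancherel in $t$ together with the polar-coordinate identity $\|f\|_{\dot H^{\gamma}}^{2}=c_{n}\int_{0}^{\infty}r^{\,2\gamma+n-1}\|g_{r}\|_{L^{2}(S^{n-1})}^{2}\,dr$ reduces the asserted estimate to the $\tau$-uniform bound
\[\|\phi\,\mathcal E_{\tau}h\|_{H^{\gamma}({\R}^{n})}\leq C\,\tau^{\,\gamma-\frac{n-1}{2}}\,\|h\|_{L^{2}(S^{n-1})},\qquad\tau>0,\ h\in L^{2}(S^{n-1}).\]

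The core ingredient is the $L^{2}$ extension estimate $\|\phi\,\mathcal E_{\tau}\|_{L^{2}(S^{n-1})\to L^{2}({\R}^{n})}\leq C\min(1,\tau^{-\frac{n-1}{2}})$, which I would prove by $TT^{*}$: the operator $\mathcal E_{\tau}^{*}\phi^{2}\mathcal E_{\tau}$ on $L^{2}(S^{n-1})$ has kernel $\widehat{\phi^{2}}\bigl(\tau(\omega-\omega')\bigr)$, which is $O_{N}\bigl((1+\tau|\omega-\omega'|)^{-N}\bigr)$ because $\phi^{2}$ is Schwartz; since $S^{n-1}$ has dimension $n-1$, Schur's test bounds its norm by $\sup_{\omega}\int_{S^{n-1}}(1+\tau|\omega-\omega'|)^{-N}\,d\omega'\lesssim\min(1,\tau^{-(n-1)})$. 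To upgrade this to the $H^{\gamma}$ bound I would use $\widehat{\phi\,\mathcal E_{\tau}h}(\xi)=\int_{S^{n-1}}\hat\phi(\xi-\tau\omega)h(\omega)\,d\omega$ and split $\{|\xi|\le 2\tau\}$ from $\{|\xi|>2\tau\}$: on the first region $\langle\xi\rangle^{2\gamma}\lesssim\langle\tau\rangle^{2\gamma}$ (using $\gamma\ge0$), so that contribution to $\|\phi\,\mathcal E_{\tau}h\|_{H^{\gamma}}^{2}$ is $\lesssim\langle\tau\rangle^{2\gamma}\min(1,\tau^{-(n-1)})\|h\|_{L^{2}(S^{n-1})}^{2}$; on the second region $|\xi-\tau\omega|\gtrsim|\xi|+\tau$, hence $\hat\phi(\xi-\tau\omega)=O_{N}(\langle\xi\rangle^{-N}\langle\tau\rangle^{-N})$ and that contribution is $\lesssim\langle\tau\rangle^{-2N}\|h\|_{L^{2}(S^{n-1})}^{2}$. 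Adding the two and comparing with $\tau^{\,2\gamma-(n-1)}$: for $\tau\ge1$ both are $\lesssim\tau^{\,2\gamma-(n-1)}$ once $N$ is large, while for $\tau\le1$ both are $O(\|h\|_{L^{2}(S^{n-1})}^{2})$, which is $\lesssim\tau^{\,2\gamma-(n-1)}\|h\|_{L^{2}(S^{n-1})}^{2}$ exactly because $2\gamma-(n-1)\le0$ — i.e.\ precisely because $\gamma\le\frac{n-1}{2}$. This gives the $\tau$-uniform bound, hence the theorem after letting $f$ vary over a dense set.

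The step I expect to carry the real weight is not any single inequality but the passage from the plain $L^{2}$ extension estimate (which is classical, essentially the Agmon--H\"ormander bound) to its frequency-weighted, $\tau$-uniform form: one must track both regimes — the large-$\tau$ behaviour being governed by dispersion and the small-$\tau$ behaviour being the low-frequency, essentially static one — and it is exactly in closing the small-$\tau$ endpoint that the hypothesis $\gamma\le\frac{n-1}{2}$ is forced. Everything else is either a routine Plancherel/polar-coordinates computation or a standard Schur-test argument.
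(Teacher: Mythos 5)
Your argument is correct. Note, however, that the paper does not prove this statement at all: it is quoted verbatim as a known result of Smith and Sogge (reference [25]), so there is no in-paper proof to compare against. Your route --- Plancherel in $t$ to reduce to a $\tau$-uniform weighted extension estimate for the sphere, the $TT^{*}$/Schur argument giving the Agmon--H\"ormander bound $\Vert\phi\,\mathcal E_{\tau}\Vert_{L^{2}(S^{n-1})\to L^{2}}\lesssim\min(1,\tau^{-(n-1)/2})$, and the frequency splitting $\{|\xi|\le 2\tau\}$ versus $\{|\xi|>2\tau\}$ to upgrade to $H^{\gamma}$ --- is exactly the classical proof underlying the cited lemma, and your identification of the small-$\tau$ endpoint as the place where $\gamma\le\frac{n-1}{2}$ is used is accurate. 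Two minor caveats: your region-one estimate uses $\gamma\ge0$, which the theorem as stated does not impose (the result as used in the paper only needs $\gamma=0$ and $\gamma=1$, so this is harmless, but you should state the restriction or handle $\gamma<0$ separately, where $\langle\xi\rangle^{2\gamma}\le1$ makes that region trivial); and the final passage from the dense class $\hat f\in\mathcal C^{\infty}_{0}(\R^{n}\setminus\{0\})$ to general $f\in\dot H^{\gamma}$ deserves one line (uniform boundedness plus Fatou on a subsequence), though it is routine.
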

This theorem  implies the following
\begin{cor}
 Let $\gamma\leq\frac{n-1}{2}$ and $\phi\in{\CI}$. Then
\[\int_{\R}\Vert \phi U_0(t)f\Vert^2_{\mathcal{H}_\gamma}\textrm{d}t\leq C(\phi,n,\gamma)\Vert f\Vert_{\dot{\mathcal{H}}_\gamma}^2.\]
\end{cor}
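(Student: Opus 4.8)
The plan is to deduce Corollary 1 directly from Theorem 3 by unpacking the definition of the energy space $\dot{\mathcal{H}}_\gamma({\R}^n)=\dot H^\gamma\times\dot H^{\gamma-1}$ and of the free group $U_0(t)=e^{iGt}$, so that the vector-valued local-energy bound reduces to two scalar bounds of the type in Theorem 3. First I would recall that for $f=(f_1,f_2)\in\dot{\mathcal{H}}_\gamma$, the first component of $U_0(t)f$ is the solution $u(t,x)$ of the free wave equation with data $(f_1,f_2)$, which can be written via the half-wave operators as
\[
u(t)=\cos(t\Lambda)f_1+\frac{\sin(t\Lambda)}{\Lambda}f_2
=\tfrac12 e^{it\Lambda}\!\left(f_1-i\Lambda^{-1}f_2\right)+\tfrac12 e^{-it\Lambda}\!\left(f_1+i\Lambda^{-1}f_2\right),
\]
and the second component is $\partial_t u(t)=-\Lambda\sin(t\Lambda)f_1+\cos(t\Lambda)f_2$, which admits an analogous decomposition into $e^{\pm it\Lambda}$ applied to $\tfrac12(\pm i\Lambda f_1+f_2)$. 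Thus $\Vert\phi U_0(t)f\Vert_{\mathcal{H}_\gamma}^2=\Vert\phi u(t)\Vert_{\dot H^\gamma}^2+\Vert\phi\partial_t u(t)\Vert_{\dot H^{\gamma-1}}^2$, and each term is a finite sum (after using $(a+b)^2\le 2a^2+2b^2$) of expressions $\Vert\phi e^{\pm it\Lambda} g\Vert_{\dot H^{\gamma}}^2$ or $\Vert\phi e^{\pm it\Lambda}h\Vert_{\dot H^{\gamma-1}}^2$ with $g=f_1\mp i\Lambda^{-1}f_2$ and $h=\pm i\Lambda f_1+f_2$.

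Next I would integrate in $t$ and apply Theorem 3 to each term. For the first-component terms the relevant exponent is $\gamma$ and the relevant data norm is $\Vert g\Vert_{\dot H^\gamma}\le\Vert f_1\Vert_{\dot H^\gamma}+\Vert\Lambda^{-1}f_2\Vert_{\dot H^\gamma}=\Vert f_1\Vert_{\dot H^\gamma}+\Vert f_2\Vert_{\dot H^{\gamma-1}}$; for the second-component terms the relevant exponent is $\gamma-1$ and the data norm is $\Vert h\Vert_{\dot H^{\gamma-1}}\le\Vert\Lambda f_1\Vert_{\dot H^{\gamma-1}}+\Vert f_2\Vert_{\dot H^{\gamma-1}}=\Vert f_1\Vert_{\dot H^{\gamma}}+\Vert f_2\Vert_{\dot H^{\gamma-1}}$. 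The hypothesis $\gamma\le\frac{n-1}{2}$ permits Theorem 3 at exponent $\gamma$, and since $\gamma-1<\gamma\le\frac{n-1}{2}$ it also permits it at exponent $\gamma-1$. Summing the finitely many pieces yields
\[
\int_{\R}\Vert\phi U_0(t)f\Vert_{\mathcal{H}_\gamma}^2\,\mathrm dt\le C(\phi,n,\gamma)\bigl(\Vert f_1\Vert_{\dot H^\gamma}^2+\Vert f_2\Vert_{\dot H^{\gamma-1}}^2\bigr)=C(\phi,n,\gamma)\Vert f\Vert_{\dot{\mathcal{H}}_\gamma}^2,
\]
which is the claim.

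There is no real obstacle here; this is a bookkeeping argument. The one point that deserves a line of care is that the half-wave decomposition of $\cos(t\Lambda)$ and $\sin(t\Lambda)/\Lambda$ involves the operator $\Lambda^{-1}$, which is fine on the homogeneous spaces (it is the isometry $\dot H^{s-1}\to\dot H^s$) and is the reason the component $f_2$ enters with its natural $\dot H^{\gamma-1}$ norm. If one prefers to avoid writing $\Lambda^{-1}f_2$ explicitly, the same conclusion follows by observing that $\phi\sin(t\Lambda)\Lambda^{-1}$ and $\phi\cos(t\Lambda)$ map $\dot H^{\gamma-1}\to H^\gamma$-in-time-$L^2$ with the stated bound, again a direct instance of Theorem 3 after shifting the exponent. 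Either way the corollary is immediate once the free propagator is written in terms of $e^{\pm it\Lambda}$.
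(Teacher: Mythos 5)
Your proof is correct and is exactly the argument the paper leaves implicit ("This theorem implies the following"): decompose $\cos(t\Lambda)$ and $\sin(t\Lambda)/\Lambda$ into the half-wave operators $e^{\pm it\Lambda}$ acting on $f_1\mp i\Lambda^{-1}f_2$ (and $\pm i\Lambda f_1+f_2$ for the time derivative), and apply Theorem 3 at exponents $\gamma$ and $\gamma-1$. The only nitpick is notational: the left-hand norm in Theorem 3 and in the corollary is the inhomogeneous one ($H^\gamma\times H^{\gamma-1}$), whereas you write it with homogeneous norms; the substance of the reduction is unaffected.
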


Combining this estimate and the link between the free wave equation and problem (1.1), we deduce our main result in this section.
\begin{Thm}
 Let $\phi\in{\CI}$, and let $n\geq 3$ be odd. Then 
\[\int_0^\infty\Vert \phi \mathcal{U}(t,0)f\Vert_{{\B}}^2\textrm{d}t\leq C(T,\phi,n,\rho)\Vert f\Vert_{{\B}}^2.\]
\end{Thm}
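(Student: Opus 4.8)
The plan is to split the solution $\mathcal{U}(t,0)f$ into the portion that behaves like the free evolution and the portion controlled by the exponential decay of local energy from Theorem~2. First I would fix a cut-off $\chi\in{\CI}$ with $\chi=1$ on a neighbourhood of $\{\vert x\vert\le\rho\}\cup\mathrm{supp}\,\phi$, and write, for $t$ large, $\mathcal{U}(t,0)=U_0(t-t_0)\mathcal{U}(t_0,0)+M(t,t_0)\mathcal{U}(t_0,0)$ where $M(t,s)=\mathcal{U}(t,s)-U_0(t-s)$ and $t_0$ is a fixed time to be chosen. The key structural fact, as in the proof of Proposition~3, is that by finite speed of propagation $M(t,t_0)=\chi_1 M(t,t_0)\chi_1$ for a suitable compactly supported $\chi_1$ once $t-t_0$ is bounded (so one should rather iterate over unit-length time steps), so the perturbative term is always supported near the origin and fed by data near the origin.

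The core of the argument is a decomposition of the time axis $[0,\infty)$ into the intervals $[0,2b]$ and $[2b,\infty)$, combined with a further "layer-cake" decomposition of $\mathcal{U}(t,0)f$ on $[2b,\infty)$ into a free piece plus a tail. Concretely, writing $v(t)=\mathcal{U}(t,0)f$, for $t\ge 2b$ one has $v(t)=U_0(t-2b)\chi v(2b)+\big(\mathcal{U}(t,2b)-U_0(t-2b)\big)\chi v(2b)+U_0(t-2b)(1-\chi)v(2b)$, and the last term, by Huygens' principle applied to data supported outside a ball, does not meet $\mathrm{supp}\,\phi$ for $t$ large — or, handled more carefully, one uses Proposition~1(ii) and the Lax–Phillips projections exactly as in the proof of Theorem~2 to write $\phi\mathcal{U}(t,2b)\chi v(2b)=\phi Z^{b}(t,2b)\chi v(2b)+\phi U_0(\cdot)(\cdots)$. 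Then:
\begin{itemize}
\item The free piece $\phi U_0(t-2b)\chi v(2b)$ is estimated in $L^2(dt)$ directly by Corollary~1 (with $\gamma$ chosen to dominate the $\dot{\mathcal H}_1$ norm on compact sets, e.g. $\gamma=1\le\frac{n-1}{2}$ since $n\ge3$), giving a bound $C\Vert\chi v(2b)\Vert_{\B}^2\le C\Vert f\Vert_{\B}^2$ by energy boundedness of $\mathcal{U}(t,0)$ on $[0,2b]$.
\item The local/perturbative tail $\phi Z^b(t,2b)\chi v(2b)$ is estimated using Proposition~4: $\Vert\phi Z^b(t,2b)\chi v(2b)\Vert_{\B}\le C e^{\delta_b}\,e^{-\delta_b(t-2b)}\Vert f\Vert_{\B}$ — wait, one must take $Z^b$ forward in the decaying direction, so really invoke the exponential bound of Theorem~2 (or Proposition~4 in the form already proven): $\Vert\phi\mathcal{U}(t,0)f\Vert_{\B}\le Ce^{-\delta t}\Vert f\Vert_{\B}$ directly for data supported in a ball, which is exactly Theorem~2. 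Since $e^{-\delta t}\in L^2(0,\infty)$, this term contributes $\le C\Vert f\Vert_{\B}^2$.
\item The contribution of $t\in[0,2b]$ is handled by the uniform energy bound $\Vert\mathcal{U}(t,0)\Vert_{\mathcal L(\B)}\le C$ (which follows from Propositions~1 and~4 applied on a finite time interval, or directly from the exponential bound there), giving $\int_0^{2b}\Vert\phi\mathcal{U}(t,0)f\Vert_{\B}^2\,dt\le C b\Vert f\Vert_{\B}^2$.
\end{itemize}
Summing the three contributions yields the claimed $L^2$ integrability with constant depending only on $T,\phi,n,\rho$.

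I expect the main obstacle to be the bookkeeping in the intermediate step: one cannot apply the single splitting at $t_0=2b$ and get a genuinely free wave for all later times, because $\mathcal{U}(t,2b)$ is itself a perturbed evolution; the honest route is to combine the Lax–Phillips machinery of Theorem~2 (to isolate the part that is exactly $Z^b(t,0)f$, hence exponentially small and $L^2$ in time) with Corollary~1 applied to the ``already outgoing'' part $U_0(\cdot)$ that the projection $(1-P^b_+)$ peels off. A secondary technical point is ensuring the cut-off $\phi$ is dominated by $\chi$ and that $\gamma=1$ satisfies $\gamma\le\frac{n-1}{2}$, which holds precisely because $n\ge3$; and that the compact-support Sobolev embedding recalled at the start of Section~3 lets one pass between $\Vert\cdot\Vert_{\B}$ on functions supported in a fixed ball and the $\dot H^1$-type norm appearing in Corollary~1. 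Once these are pinned down the estimate is a two-line sum.
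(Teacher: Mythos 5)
Your three displayed pieces do not add up to $\mathcal{U}(t,2b)v(2b)$: summing $U_0(t-2b)\chi v(2b)+\bigl(\mathcal{U}(t,2b)-U_0(t-2b)\bigr)\chi v(2b)+U_0(t-2b)(1-\chi)v(2b)$ gives $\mathcal{U}(t,2b)\chi v(2b)+U_0(t-2b)(1-\chi)v(2b)$, so the term $\bigl(\mathcal{U}(t,2b)-U_0(t-2b)\bigr)(1-\chi)v(2b)$ is simply missing — and that is precisely the hard term. It cannot be discarded by Huygens' principle: $(1-\chi)v(2b)$ is supported on an unbounded set, so the free wave it generates keeps passing through $\mathrm{supp}\,\phi$ at all later times; nor can it be absorbed into the Lax--Phillips machinery, since $Z^b$ only controls $\phi\,\mathcal{U}(t,s)g$ for $g\in (D^b_-)^\perp$ (e.g. $g$ compactly supported), which $(1-\chi)v(2b)$ is not. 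So the proposal handles the near-data part (exponential decay, Theorem~2), the genuinely free part (Corollary~1), and the finite interval $[0,2b]$, but leaves unestimated exactly the interaction between the far part of the data and the perturbation.

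The paper closes this gap with a Duhamel/commutator argument that your proposal does not contain: writing $u_0$ for the free evolution of $f$, one sets $u_1=(1-\chi)u_0$ and observes that $u_2=u-u_1$ solves the \emph{perturbed} equation with zero data and source $-[\Delta_x,\chi]u_0(s)$, which is supported in a fixed ball. Hence
\[
\bigl(u_2,\partial_t u_2\bigr)(t)=-\int_0^t\mathcal{U}(t,s)\bigl(0,[\Delta_x,\chi]u_0(s)\bigr)\,\mathrm{d}s,
\]
and Theorem~2 applies to each integrand (compactly supported data), giving a kernel $Ce^{-\delta(t-s)}\Vert\beta U_0(s)f\Vert_{\B}$. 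The $L^2_t$ bound then follows from Young's inequality for the convolution of $e^{-\delta t}\mathds{1}_{[0,\infty)}\in L^1$ with $\Vert\beta U_0(\cdot)f\Vert_{\B}\in L^2$, the latter by Corollary~1. This convolution step (exponential kernel in $t-s$ against the Smith--Sogge $L^2$ local energy of the free wave) is the key mechanism absent from your write-up; without it, or an equivalent substitute, the proof does not close.
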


\begin{proof}Let $\phi\in{\CI}$, $f\in {\B}$ and $\chi\in{\CI}$ be such that\\
 $\chi=1$ for $ \vert x\vert\leq \rho +\half$ and $supp\chi\subset\{\ x\ :\ \vert x\vert\leq\rho+1\}$, $0\leq\chi\leq1$. Notice that
\[\phi \mathcal{U}(t,0)f=\phi \mathcal{U}(t,0)\chi f+ \phi \mathcal{U}(t,0)(1-\chi)f.\]
Theorem 2 implies
\[\int_0^{+\infty} \Vert\phi \mathcal{U}(t,0)\chi f\Vert^2_{\B}\textrm{d}t\leq C^2(\phi,n,\rho) (\int_0^{+\infty}e^{-2\delta_bt}\textrm{d}t)\Vert f\Vert_{\B}^2\leq C_1(\phi,n,\rho)\Vert f\Vert_{\B}.\]
It remains to show that
\[\Vert \phi \mathcal{U}(t,0)(1-\chi)f\Vert_{L^2({\R}_t^+,{\B})}\leq C(\phi,n,\rho)\Vert f\Vert^2_{\B}.\]
  Let  $u,u_0,u_1,u_2$ be defined by
\[\left(u_0,\partial_t(u_0)\right)(t)=U_0(t)f,\quad \left(u,\partial_t(u)\right)(t)=\mathcal{U}(t,0)(1-\chi)f,\quad u_1=(1-\chi)u_0\quad\textrm{and }u_2=u-u_1.\]
We have
\[\partial^2_t(u_1)-\Div_x(a(t,x)\nabla_x (u_1))=(\partial^2_t-\Delta_x)u_1.\]
Also 
\[(\partial_t^2-\Delta_x)u_1=(1-\chi)(\partial_t^2-\Delta_x)(u_0)+[\Delta_x,\chi](u_0)=[\Delta_x,\chi](u_0).\]
Thus $u_2$ is the solution of the following Cauchy problem

\[  \left\{\begin{array}{c}
\partial_t^2(u_2)-\Div_x(a\nabla_x u_2)=\partial_t^2u-\Div_x(a\nabla_x u) -(\partial_t^2u_1-\Div_x(a\nabla_x u_1))=-[\Delta_x,\chi]u_0,\\
(u_2,\partial_t(u_2))(0)=0.\end{array}\right.\]
Therefore,  we can write
\[\left( u_2, \partial_t(u_2)\right)(t)=-\int_0^t\mathcal{U}(t,s)\left(0,[\Delta_x,\chi]u_0(s)\right)\textrm{d}s.\]
On the other hand, supp$\left(0,[\Delta_x,\chi]u_0(s)\right)\subset$supp$\chi\subset\{\vert x\vert<\rho+1\}$ and applying
 Theorem 2,  with $R=\rho +1$  we find
\[\left\Vert \phi \mathcal{U}(t,s)\left(0,[\Delta_x,\chi]u_0(s)\right)\right\Vert_{\B}\leq C(\rho,n)e^{-\delta_b(t-s)}\left\Vert \left(0,[\Delta_x,\chi]u_0(s)\right)\right\Vert_{\B}.\]
Choosing a cut-off function $\beta\in{\CI}$  equal to 1 on supp$\chi$, we get
\[\left\Vert \left(0,[\Delta_x,\chi]u_0(s)\right)\right\Vert_{\B}\leq C(\rho,n) \Vert \beta u_0(s)\Vert_{\dot{H}^1}\leq C(\rho,n)\Vert\beta U_0(s)f\Vert_{\B}.\]
Therefore,
\[\begin{array}{lll}\displaystyle\left\Vert \phi\left(u_2,\partial_t(u_2)\right)(t)\right\Vert_{\B}&\leq& C(\rho,n,\phi)\int_0^te^{-\delta_b(t-s)}\Vert\beta U_0(s)f\Vert_{\B}\textrm{d}s,\\ \ &\leq&C(\rho,n,\phi)(e^{-\delta_bt}1_{[0,+\infty[})*(\Vert\beta U_0(t)f\Vert_{\B}1_{[0,+\infty[}).\end{array}\]
An application of Young inequality  yields
\[\left(\int_{{\R}^+}\Vert\phi \left(u_2,\\ \partial_t(u_2)\right)(t)\Vert_{\B}^2\textrm{d}t\right)^{\half}\leq
C(\rho,n,\phi)\left(\int_0^{+\infty}e^{-\delta_bt}\textrm{d}t\right)\left(\int_{{\R}^+}\Vert\beta U_0(t)f\Vert_{\B}^2\textrm{d}t\right)^\half.\]
Since $1\leq\frac{n-1}{2}$, Corollary 1 shows that 
\[\left\Vert\phi \left( u_2,\partial_t(u_2)\right)(t)\right\Vert_{L^2({\R}^+,{\B})}\leq C(\phi,\rho,n)\Vert f\Vert_{\B}.\]
Consequently,
\begin{equation} \label{eq:2}\Vert \phi \mathcal{U}(t,0)(1-\chi)f\Vert_{L^2({\R}^+,{\B})}
\leq C(\phi,\rho,n) \Vert f\Vert_{\B}.\end{equation}	
In conclusion, we obtain
\[\begin{array}{lll}\Vert \phi \mathcal{U}(t,0) f\Vert_{L^2({\R}^+,{\B})}&\leq& \Vert \phi \mathcal{U}(t,0)\chi f\Vert_{L^2({\R}^+,{\B})} \\
\ &\ &\ + \Vert \phi \mathcal{U}(t,0)(1-\chi) f\Vert_{L^2({\R}^+,{\B})},\\ \ &\leq& C(\rho,\phi,n)\Vert f\Vert_{\B}.\\ \end{array}\]
\end{proof}

\vspace{1cm}

 Consider the solution $u$ of (1.1) with $s=0$ and 
let $\chi\in {\CI}$ be a cut-off function satisfying supp$\chi\subset B_{\rho+1}$, $\chi(x)=1$ $  x\in B_{\rho +\half}$ and $0\leq \chi\leq1$. We will establish Strichartz estimates for  $(1-\chi)u$ and $\chi u$. Notice that
\[\partial_t^2((1-\chi)u)-\Delta((1-\chi)u)=(1-\chi)(\partial_t^2-\Div_x(a\nabla_x u))+[\Delta_x,\chi]u= [\Delta_x,\chi]u.\]
Therefore, $(1-\chi)u$ is the solution of the free wave equation with right hand part \newline
$[\Delta_x,\chi]u\in L^2_t({\R}^+,L^2_x({\R}^n))$ and initial data $(1-\chi)f$. We will apply the result of
[19] to obtain Strichartz estimate for $(1-\chi)u$.
On the other hand, $\chi u$ satisfies
\[\partial^2_t(\chi u)-\Div_x(a\nabla_x (\chi u))=\chi(\partial_t^2(u)-\Div_x(a\nabla_x u)) -[\Div_x(a\nabla_x),\chi]u=-[\Div_x(a\nabla_x),\chi]u,\]
and $(\chi u,\partial_t(\chi u))(0)=\chi f$. To deal with this term  we will exploit local Strichartz estimates. 
\section{ Strichartz estimate for $(1-\chi)u$}
\renewcommand{\theequation}{\arabic{section}.\arabic{equation}}
\setcounter{equation}{0}
In this section our purpose is to prove the following.
\begin{prop} 
Let $2\leq p, q<\infty$ satisfy inequalities  (1.3) with $p>2$. Then for all  $t>0$ we have
\[\Vert (1-\chi)u\Vert_{L^p({\R}^+,L^q({R}^n))}+ \Vert(1-\chi) u(t)\Vert_{\dot{H}^1({\R}^n)}+\Vert \partial_t(1-\chi)(u)(t)\Vert_{L^2({\R}^n)}\leq C(\Vert f_1\Vert_{\dot{H}^1({\R}^n)}+\Vert f_2\Vert_{L^2({\R}^n)}).\]

\end{prop}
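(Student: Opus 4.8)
The plan is to reduce the estimate for $(1-\chi)u$ to a global Strichartz estimate for the free wave equation with a source term, and then to control that source term using the $L^2$-integrability of the local energy established in Theorem 4. As already observed just before the statement, $w:=(1-\chi)u$ solves the free wave equation
\[\partial_t^2 w-\Delta_x w=[\Delta_x,\chi]u=:F,\qquad (w,\partial_t w)(0)=(1-\chi)f,\]
where the commutator $[\Delta_x,\chi]=(\Delta_x\chi)+2\nabla_x\chi\cdot\nabla_x$ is a first-order differential operator with coefficients supported in $\{\rho+\tfrac12\le|x|\le\rho+1\}$. Therefore $F(t,\cdot)$ is supported in that fixed compact annulus for every $t$, and by Duhamel's formula
\[w(t)=\big(U_0(t)(1-\chi)f\big)_1-\int_0^t\frac{\sin((t-s)\Lambda)}{\Lambda}F(s)\,\mathrm{d}s.\]

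First I would handle the homogeneous part: since $(p,q,1)$ satisfies (1.3) (with $p>2$), the free Strichartz estimate and energy conservation give
\[\|U_0(t)(1-\chi)f\|_{L^p(\R^+,L^q)}+\|\,\cdot\,(t)\|_{\dot{\mathcal H}_1}\le C\big(\|f_1\|_{\dot H^1}+\|f_2\|_{L^2}\big),\]
using $\|(1-\chi)f\|_{\dot{\mathcal H}_1}\lesssim\|f\|_{\dot{\mathcal H}_1}$ (the factor $1-\chi$ is a bounded multiplier on these spaces, up to a harmless compactly supported commutator which is controlled by the local energy). For the Duhamel term I would invoke the inhomogeneous Strichartz estimate for the free wave equation (this is exactly the result of [19] that the paper says it will apply): with the dual exponent pair one has
\[\Big\|\int_0^t\frac{\sin((t-s)\Lambda)}{\Lambda}F(s)\,\mathrm{d}s\Big\|_{L^p(\R^+,L^q)}+\sup_t\|\,\cdot\,(t)\|_{\dot{\mathcal H}_1}\le C\|F\|_{L^2(\R^+,L^2)},\]
so everything comes down to bounding $\|F\|_{L^2(\R^+,L^2(\R^n))}$. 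Because $F=[\Delta_x,\chi]u$ is a first-order operator localized to $\{|x|\le\rho+1\}$, choosing a cut-off $\phi\in\CI$ equal to $1$ on that annulus we get $\|F(t)\|_{L^2}\lesssim\|\phi u(t)\|_{\dot H^1}\lesssim\|\phi\,\mathcal U(t,0)f\|_{\B}$, whence
\[\|F\|_{L^2(\R^+,L^2)}\le C\Big(\int_0^\infty\|\phi\,\mathcal U(t,0)f\|_{\B}^2\,\mathrm{d}t\Big)^{1/2}\le C\|f\|_{\B}\]
by Theorem 4. Combining the three pieces yields the claimed estimate, with the $\dot H^1\times L^2$ norm on the right-hand side equal (up to constants) to $\|f_1\|_{\dot H^1}+\|f_2\|_{L^2}$.

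The main obstacle is making the inhomogeneous free Strichartz estimate usable: the admissible pair $(p,q)$ appearing in (1.3) with $\gamma=1$ need not be the "diagonal" pair, so one must check that the inhomogeneous estimate with $L^2_tL^2_x$ on the source side is available for precisely these exponents. This is where one appeals to [19] (and to the general inhomogeneous theory, e.g. Harmse/Oberlin-type or the Keel–Tao machinery), exploiting that $\tfrac1p=\tfrac{n(q-2)}{2q}-1$ places $(p,q,1)$ on the admissible line, so the source-side exponent $L^2_tL^2_x$ is the endpoint dual allowed by (1.3); Remark 2 flags precisely that the time-dependence prevents a direct appeal to Tao's argument, which is why the decomposition $u=(1-\chi)u+\chi u$ is forced and why $(1-\chi)u$ is deliberately arranged to solve a \emph{free} equation. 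A secondary technical point is the treatment of the commutators generated by moving $1-\chi$ past $\Lambda^{\pm1}$ in the homogeneous term; these are compactly supported smoothing operators, so their contribution is again absorbed by the local-energy bound of Theorem 4, and no new difficulty arises.
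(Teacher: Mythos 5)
Your proposal is correct and follows essentially the same route as the paper: the Duhamel representation of $(1-\chi)u$ as a free wave with source $[\Delta_x,\chi]u$, the global free Strichartz estimate for the homogeneous part, the localized inhomogeneous estimate of Lemma~2 (Christ--Kiselev combined with the Smith--Sogge local smoothing bound, i.e.\ the result of [19]) for the Duhamel term, and Theorem~4 to bound $\Vert[\Delta_x,\chi]u\Vert_{L^2_tL^2_x}$. The one imprecision is your description of $L^2_tL^2_x$ as the ``endpoint dual'' exponent permitted by (1.3): the $L^2_tL^2_x$ control of the source is not a dual Strichartz exponent but is available precisely because the source is compactly supported in $x$ (it comes from local energy decay/local smoothing), a fact you do correctly exploit.
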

To prove Proposition 5 we need two lemmas.
\begin{lem}
 Let $X$ and $Y$ be Banach spaces, and for all $s,t\in{\R}^+$ let $K(s,t):X\longrightarrow Y$ be an operator-valued kernel from $X$ to $Y$. Suppose that
\[\left\Vert\int_0^{t_0}K(s,t)g(s)\textrm{d}s\right\Vert_{L^l([t_0,+\infty[,Y)}\leq A\Vert g\Vert_{L^r({\R}^+,X)}\]
for some $A>0$, $1\leq r<l\leq+\infty$,  all $t_0\in{\R}^+$ and $g\in L^r({\R}^+,X)$. Then we have

\[\left\Vert\int_0^tK(s,t)g(s)\textrm{d}s\right\Vert_{L^l({\R}^+,Y)}\leq AC_{r,l}\Vert g\Vert_{L^r({\R}^+,X)}\]
where $C_{r,l}>0$ depends only on $r,l$.
 \end{lem}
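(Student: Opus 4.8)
The plan is to reduce the claimed bound to the given hypothesis by splitting the time axis dyadically (or rather by a single partition argument) so that the ``lower-triangular'' integral operator $g\mapsto \int_0^t K(s,t)g(s)\,ds$ is controlled in terms of the family of ``strictly lower-triangular'' operators $g\mapsto \int_0^{t_0}K(s,t)g(s)\,ds$ over $t\ge t_0$, whose uniform bound is exactly the hypothesis. First I would introduce the points $t_j = 2^j$ for $j\in\mathbb{Z}$ (or $t_j=jh$ for a fixed scale), giving a partition of $\mathbb{R}^+$ into intervals $I_j=[t_j,t_{j+1})$; the key geometric fact is that for $t\in I_j$ one has $\int_0^t = \int_0^{t_{j}} + \int_{t_j}^t$, the first piece handled by the hypothesis with $t_0=t_j$ and the second piece being an integral over a single interval $I_j$ in both the $s$ and $t$ variables.

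Next I would estimate the two contributions separately. For the ``far'' part $F(t)=\int_0^{t_j}K(s,t)g(s)\,ds$ with $t\in I_j$: the hypothesis applied with $t_0=t_j$ gives $\|F\|_{L^l([t_j,\infty),Y)}\le A\|g\|_{L^r}$, hence a fortiori $\|F\|_{L^l(I_j,Y)}\le A\|g\|_{L^r(\mathbb{R}^+,X)}$. To assemble the pieces over $j$ I would use that $l>r\ge1$: writing $a_j=\|F\|_{L^l(I_j,Y)}$ we have $\|F\|_{L^l(\mathbb{R}^+,Y)}=(\sum_j a_j^l)^{1/l}\le (\sum_j a_j^r)^{1/r}$ since $\ell^r\hookrightarrow\ell^l$, and each $a_j$ is bounded by $A$ times $\|g\|_{L^r(J_j,X)}$ where $J_j=[0,t_j)$ — but this is not yet summable, so one must be more careful: one should instead apply the hypothesis with $g$ replaced by $g\mathbf{1}_{[0,t_{j-1})}$ truncated, or better, exploit that $K(s,t)$ only couples $s<t$ and split $g=\sum_k g\mathbf{1}_{I_k}$ so that $F$ restricted to $I_j$ only sees $g\mathbf{1}_{I_k}$ for $k<j$; then the off-diagonal decay must come purely from the summation exponent gap $1/r-1/l>0$ via a Schur-type/Young-type estimate on the sequence level. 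Concretely, set $b_k=\|g\|_{L^r(I_k,X)}$; then $a_j\le A\,\big(\sum_{k<j}b_k^r\big)^{1/r}$ is still false as stated, so the honest route is to observe $F|_{I_j}=\int_0^{t_{j-1}}K(s,t)g\,ds + \int_{t_{j-1}}^{t_j}K(s,t)g\,ds$, push the truncation down one step, and iterate/telescope — i.e. bound $\|\int_0^{t}K g\|_{L^l(I_j)}$ by $\|\int_0^{t_{j-1}}Kg\|_{L^l([t_{j-1},\infty))}$ (hypothesis, $\le A\|g\|_{L^r}$... same issue).

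Because of that circularity, the clean argument I would actually carry out is the standard Christ--Kiselev lemma proof: build a Calderón--Zygmund-type stopping-time decomposition of $[0,\infty)$ adapted to $g$, namely choose points $0=\sigma_0<\sigma_1<\cdots$ so that $\int_{\sigma_{i}}^{\sigma_{i+1}}\|g(s)\|_X^r\,ds = 2^{-i}\|g\|_{L^r}^r$ (or equal masses on a binary tree), decompose the region $\{0<s<t\}$ into the disjoint union of ``dyadic rectangles'' $E_{n,k}$ on which the kernel acts, and on each rectangle replace $\int_0^t$ over that piece by an expression of the form $\int_0^{\tau}K(s,t)g_{n,k}(s)\,ds$ for $t$ in the appropriate interval and $g_{n,k}=g\mathbf{1}_{(\text{interval})}$, to which the hypothesis applies with constant $A$ times the \emph{local} mass of $g$; the gain $2^{-c n}$ with $c=1/r-1/l>0$ at the $n$-th generation comes from the mass-halving, and summing the geometric series yields $\|\int_0^t Kg\|_{L^l}\le A\,C_{r,l}\|g\|_{L^r}$ with $C_{r,l}=\sum_n 2^{-cn}<\infty$, depending only on $r,l$. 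The main obstacle is precisely this: one cannot get the result by a naive one-step interval split, since the hypothesis only controls the ``past'' integral up to a fixed time with a \emph{fixed} constant $A$ (not a small one), so the summability over scales has to be manufactured by a multiscale stopping-time decomposition that extracts the factor $2^{-(1/r-1/l)n}$ — this is the heart of the Christ--Kiselev lemma and the only genuinely nontrivial point; once the decomposition is set up, everything else is Minkowski/Hölder bookkeeping and using $\ell^r\hookrightarrow\ell^l$.
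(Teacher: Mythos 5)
Your final argument is correct and is precisely the standard proof of the Christ--Kiselev lemma: a stopping-time decomposition equidistributing the $L^r$ mass of $g$, a Whitney-type covering of the region $\{s<t\}$ by dyadic rectangles, the hypothesis applied on each rectangle with local mass $2^{-n/r}\Vert g\Vert_{L^r}$, and summation of the resulting geometric series with ratio $2^{-n(1/r-1/l)}$. The paper itself gives no proof but simply cites the references for this lemma, and what you describe is exactly the argument found there; your first two paragraphs (the naive one-step splitting and its failure) are dispensable, but you correctly diagnose why that route is circular before switching to the right multiscale construction.
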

We refer to  [11] for the proof of Lemma 1 which is called the Christ-Kiselev lemma (see also the original paper [5]). Next, consider 
\[K(s,t)=\frac{\sin((t-s)\Lambda)}{\Lambda}\psi,\ \ X=L^2({\R}^n),\ \ Y=L^q({\R}^n),\ \ l=p\ \ \textrm{and }r=2.\]
Applying the Christ-Kiselev lemma and Strichartz estimates for the free wave equation (see [14]) we get the following .
\begin{lem}
 Let $p$ and $q$ satisfy  (1.3) with $p>2$ and $\gamma=1$. Then for all $\psi\in{\CI}$ we have
\[\left\Vert\int_0^t\frac{\sin((t-s)\Lambda)}{\Lambda}\psi h(s,.)\textrm{d}s\right\Vert_{L^p({\R},L^q({\R}^n))}\leq C(p,q,n,\psi)\Vert h\Vert_{L^2({\R}^+,L^2({\R}^n))}\]
and
\[\left\Vert\int_0^t\frac{\sin( (t-s)\Lambda)}{\Lambda}\psi h(s,.)\textrm{d}s\right\Vert_{L^2({\R}^n)}\leq C(p,q,n,\psi)\Vert h\Vert_{L^2({\R}^+,L^2({\R}^n))}.\]
\end{lem}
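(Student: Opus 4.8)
The plan is to reduce the retarded operator to the two free half--wave groups $e^{\pm it\Lambda}$, to estimate the resulting non--retarded time integrals by a duality argument powered by the local energy estimate of Theorem 3, and then to recover the truncated integral $\int_0^t$ from the untruncated one via the Christ--Kiselev lemma (Lemma 1). Using $\frac{\sin((t-s)\Lambda)}{\Lambda}=\frac{1}{2i}\Lambda^{-1}\bigl(e^{i(t-s)\Lambda}-e^{-i(t-s)\Lambda}\bigr)$ and pulling the $t$--dependence out of the $s$--integral,
\[\int_0^{+\infty}\frac{\sin((t-s)\Lambda)}{\Lambda}\psi h(s,\cdot)\,\textrm{d}s=\frac{1}{2i}\,e^{it\Lambda}G_+-\frac{1}{2i}\,e^{-it\Lambda}G_-,\qquad G_\pm:=\Lambda^{-1}\int_0^{+\infty}e^{\mp is\Lambda}\psi h(s,\cdot)\,\textrm{d}s.\]
The first point I would establish is $\Vert G_\pm\Vert_{\dot{H}^1}\le C(\psi,n)\Vert h\Vert_{L^2({\R}^+,L^2)}$. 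Since $\Vert G_\pm\Vert_{\dot{H}^1}=\Vert\int_0^{+\infty}e^{\mp is\Lambda}\psi h(s,\cdot)\,\textrm{d}s\Vert_{L^2}$, I would test against $w\in L^2$ and use $\psi^*=\psi$, $(e^{\mp is\Lambda})^*=e^{\pm is\Lambda}$:
\[\Bigl|\bigl\langle\int_0^{+\infty}e^{\mp is\Lambda}\psi h(s,\cdot)\,\textrm{d}s,\,w\bigr\rangle\Bigr|=\Bigl|\int_0^{+\infty}\langle h(s,\cdot),\psi e^{\pm is\Lambda}w\rangle\,\textrm{d}s\Bigr|\le\Vert h\Vert_{L^2({\R}^+,L^2)}\Bigl(\int_{\R}\Vert\psi e^{\pm is\Lambda}w\Vert_{L^2}^2\,\textrm{d}s\Bigr)^{1/2},\]
and the last factor is $\le C(\psi,n)\Vert w\Vert_{L^2}$ by Theorem 3 with $\gamma=0$ (admissible since $0\le\frac{n-1}{2}$). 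Taking the supremum over $\Vert w\Vert_{L^2}\le1$ gives the claim.

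Next, the Strichartz estimate for the free wave equation recalled in Section 1 --- which, under (1.3) with $\gamma=1$ (the case $p>2$ being away from the excluded endpoint when $n=3$), gives $\Vert e^{\pm it\Lambda}g\Vert_{L^p({\R},L^q)}\le C(p,q,n)\Vert g\Vert_{\dot{H}^1}$ --- combined with the previous step yields
\[\Bigl\Vert\int_0^{+\infty}\frac{\sin((t-s)\Lambda)}{\Lambda}\psi h(s,\cdot)\,\textrm{d}s\Bigr\Vert_{L^p({\R},L^q)}\le C(p,q,n,\psi)\Vert h\Vert_{L^2({\R}^+,L^2)}.\]
Now I would apply Lemma 1 with $X=L^2({\R}^n)$, $Y=L^q({\R}^n)$, $K(s,t)=\frac{\sin((t-s)\Lambda)}{\Lambda}\psi$, $l=p$, $r=2$ (so $r<l$ because $p>2$). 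To check its hypothesis: for $t_0\in{\R}^+$ and $t\ge t_0\ (\ge s)$ one has $\int_0^{t_0}K(s,t)g(s)\,\textrm{d}s=\int_0^{+\infty}K(s,t)\bigl(g\mathbf{1}_{[0,t_0]}\bigr)(s)\,\textrm{d}s$, so by the displayed bound its norm in $L^p([t_0,+\infty),L^q)$ is $\le C(p,q,n,\psi)\Vert g\mathbf{1}_{[0,t_0]}\Vert_{L^2({\R}^+,L^2)}\le C(p,q,n,\psi)\Vert g\Vert_{L^2({\R}^+,L^2)}$. Lemma 1 then gives $\Vert\int_0^t K(s,t)g(s)\,\textrm{d}s\Vert_{L^p({\R}^+,L^q)}\le C\Vert g\Vert_{L^2({\R}^+,L^2)}$, which is the first inequality.

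For the second inequality I would run the same splitting at a fixed $t>0$: with $G_\pm(t)=\Lambda^{-1}\int_0^t e^{\mp is\Lambda}\psi h(s,\cdot)\,\textrm{d}s$, the identical duality computation (the $s$--integral now over $[0,t]\subset{\R}^+$) gives $\Vert\int_0^t e^{\mp is\Lambda}\psi h(s,\cdot)\,\textrm{d}s\Vert_{L^2}\le C(\psi,n)\Vert h\Vert_{L^2({\R}^+,L^2)}$ uniformly in $t$, whence, $e^{\pm it\Lambda}$ being unitary on $L^2$ and on $\dot{H}^1$, the Duhamel term is bounded in $\dot{H}^1$ by $C(\psi,n)\Vert h\Vert_{L^2({\R}^+,L^2)}$; the corresponding bound in $L^2({\R}^n)$ then follows with the help of the elementary energy estimates $\Vert\cos(t\Lambda)\cdot\Vert_{L^2}\le\Vert\cdot\Vert_{L^2}$, $\Vert\Lambda^{-1}\sin(t\Lambda)\cdot\Vert_{\dot{H}^1}\le\Vert\cdot\Vert_{L^2}$ and the finite speed of propagation (the source $\psi h$ being supported in a fixed ball). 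I expect the only real obstacle to be the Christ--Kiselev step, which is where the strict inequality $p>2$ is genuinely used; note also that, the data being only in $L^2({\R}^+,L^2)$ with no extra time integrability, one is compelled to invoke the global--in--time local energy estimate of Theorem 3 instead of the usual inhomogeneous Strichartz estimates with dual exponents.
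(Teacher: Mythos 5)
Your treatment of the first inequality is exactly the argument the paper has in mind (the text only sets up the kernel $K(s,t)=\frac{\sin((t-s)\Lambda)}{\Lambda}\psi$ for Christ--Kiselev and defers the details to [19]): split the sine kernel into the half-wave groups, bound $G_\pm$ in $\dot{H}^1$ by duality against the Smith--Sogge estimate of Theorem 3 with $\gamma=0$, apply the free Strichartz estimate for $e^{\pm it\Lambda}$ with $\gamma=1$, and pass from the untruncated to the retarded integral by Lemma 1, which is where $p>2$ enters. That part is complete and correct.

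The last step of your argument for the second inequality does not work as written. You correctly get $\bigl\Vert\int_0^t e^{\mp is\Lambda}\psi h(s,\cdot)\,\textrm{d}s\bigr\Vert_{L^2}\leq C\Vert h\Vert_{L^2({\R}^+,L^2)}$ uniformly in $t$, hence a uniform $\dot{H}^1$ bound on the Duhamel term $w(t)$; but deducing $\Vert w(t)\Vert_{L^2}\leq C\Vert h\Vert_{L^2({\R}^+,L^2)}$ from this ``by energy estimates and finite speed of propagation'' fails: finite speed of propagation only confines $w(t)$ to a ball of radius $O(t)$, and Poincar\'e or Sobolev--H\"older on that ball gives $\Vert w(t)\Vert_{L^2}\lesssim t\,\Vert w(t)\Vert_{\dot{H}^1}$, which is not uniform in $t$. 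The repair stays entirely inside your own framework: run the identical duality computation once more for $\Lambda^{-1}\int_0^t e^{\mp is\Lambda}\psi h(s,\cdot)\,\textrm{d}s$ tested against $w\in L^2$; the dual object is then $\psi e^{\pm is\Lambda}\Lambda^{-1}w$, and Theorem 3 with $\gamma=1$ (admissible since $1\leq\frac{n-1}{2}$ for $n\geq3$), together with $\Vert\cdot\Vert_{L^2}\leq\Vert\cdot\Vert_{H^1}$ and $\Vert\Lambda^{-1}w\Vert_{\dot{H}^1}=\Vert w\Vert_{L^2}$, gives $\int_{\R}\Vert\psi e^{\pm is\Lambda}\Lambda^{-1}w\Vert_{L^2}^2\,\textrm{d}s\leq C\Vert w\Vert_{L^2}^2$, which is exactly what is needed. (For the use made of the lemma in Proposition 5 it is the uniform energy bound that matters, and that one you do prove.)
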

We refer to [19] for the proof of  Lemme 2.\\
\textit{Proof of  Proposition 5:}  As we mentioned above, $(1-\chi)u$ is the solution of the free wave equation with right hand term
 $[\Delta_x,\chi]u\in L^2_t({\R}^+,L^2_x({\R}^n))$ and initial data\\
  $((1-\chi)u,(1-\chi)u_t)(0)=((1-\chi)f_1,(1-\chi)f_2)$. Thus
 \[(1-\chi)u(t)=\cos(t\Lambda)(1-\chi)f_1+\frac{\sin(t\Lambda)}{\Lambda}(1-\chi)f_2+\int_0^t\frac{\sin((t-s)\Lambda)}{\Lambda}[\Delta_x,\chi]u(s)\textrm{d}s.\]
  Exploiting the global Strichatz estimates for the free wave equation, Lemma 2 and Theorem 4  we obtain immediately Proposition 5. $\quad\quad\quad\quad\quad\quad\quad\quad\quad\quad\quad\quad\quad\quad\quad\quad\quad\quad\quad\quad\quad\quad\quad\quad\quad\quad\quad\quad\square$
\vspace{1cm}

\section{ Strichartz  estimates for $\chi u$}
\renewcommand{\theequation}{\arabic{section}.\arabic{equation}}
\setcounter{equation}{0}

In this  section we  admit the following local Strichartz estimates which will be established in Section 7.
\begin{Thm}
 Let  $\psi\in{\CI}$. Then there exists $\delta>0$ such that for  $2\leq p,q<+\infty$, $s\in[0,T]$ and $\gamma>0$ satisfying
\begin{equation}\label{eq:1}\frac{n(q-2)}{2q}-\gamma=\frac{1}{p}\leq\frac{(n-1)(q-2)}{4q},\end{equation}
we have
 \[\int_s^{s+\delta} \Vert\psi (\mathcal{U}(t,s)f)_1\Vert_{L^q({\R}^n)}^p\textrm{d}t\leq C(T,\psi,\delta ,p,q,n)\Vert f\Vert_{\dot{\mathcal{H}}_\gamma}^p,\]
where $\delta$ and $C>0$ are independent on $s$ and $f$.
\end{Thm}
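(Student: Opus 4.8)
The plan is to construct a local parametrix for the Cauchy problem (1.1) by Fourier integral operators (FIOs) and then invoke Kapitanski's local Strichartz estimates \cite{12}. Fix $\psi\in\CI$ and $s\in[0,T]$. Since the coefficient $a(t,x)$ is smooth, bounded above and below by $C_0,c_0>0$, and equals $1$ outside $\{|x|\le\rho\}$, the operator $\partial_t^2-\Div_x(a\nabla_x)$ is a strictly hyperbolic second-order operator with smooth coefficients, all of whose relevant quantities are uniform in $s$ by the time-periodicity (iii). First I would reduce the half-wave factorization: writing the solution via the spectral-type decomposition associated with the symbol $\tau^2-a(t,x)|\xi|^2$, one splits $(\mathcal U(t,s)f)_1$ into two half-wave components $u_\pm(t)$ governed by first-order pseudodifferential evolution equations $\partial_t u_\pm \mp i\sqrt{a(t,x)}\,|D_x| u_\pm = (\text{lower order})u_\pm$, modulo smoothing terms localized near $|x|\le\rho$. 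On a short time interval $[s,s+\delta]$ with $\delta$ small (but independent of $s$, using periodicity), standard geometric optics produces a representation
\[
\psi\, u_\pm(t,x)=\sum_{\pm}\int_{\R^n} e^{i\varphi_\pm(t,x,\xi)}\, b_\pm(t,x,\xi)\,\widehat{g_\pm}(\xi)\,d\xi + R(t,s)g_\pm,
\]
where $\varphi_\pm$ solves the eikonal equation $\partial_t\varphi_\pm=\pm\sqrt{a(t,x)}|\nabla_x\varphi_\pm|$ with $\varphi_\pm(s,x,\xi)=x\cdot\xi$, $b_\pm$ are symbols of order $0$ solving transport equations, $R(t,s)$ is a smoothing remainder, and $g_\pm$ are the initial half-wave data built from $f=(f_1,f_2)$ (so $\|g_\pm\|_{\dot H^\gamma}\lesssim\|f\|_{\dot{\mathcal H}_\gamma}$).

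Next I would verify the hypotheses of Kapitanski's theorem: one needs the phase functions $\varphi_\pm$ to be defined and nondegenerate on $[s,s+\delta]\times\mathrm{supp}\,\psi\times(\R^n\setminus 0)$ with $\mathrm{Hess}_\xi\varphi_\pm$ of the expected rank, and the amplitudes $b_\pm$ together with all their derivatives bounded uniformly. The point is that for $\delta$ sufficiently small the eikonal equation has a unique smooth solution by the method of characteristics (the Hamiltonian flow of $\pm\sqrt{a(t,x)}|\xi|$ stays close to the free flow), and all the bounds on $\varphi_\pm,b_\pm$ are uniform in $s$ because $a$ is $T$-periodic and hence its $C^k$ norms on $[0,T+\delta]$ control everything. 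With this FIO representation satisfying the quantitative hypotheses, Kapitanski's result yields
\[
\Big(\int_s^{s+\delta}\big\|\psi\, u_\pm(t)\big\|_{L^q(\R^n)}^p\,dt\Big)^{1/p}\le C(T,\psi,\delta,p,q,n)\,\|g_\pm\|_{\dot H^\gamma}
\]
precisely under the scaling and admissibility conditions (5.1), namely $\frac{n(q-2)}{2q}-\gamma=\frac1p\le\frac{(n-1)(q-2)}{4q}$ (the second inequality being the genuine Strichartz admissibility constraint for the wave equation, the first being the scaling identity). The smoothing remainder $R(t,s)g_\pm$ is handled trivially: it maps $\dot{\mathcal H}_\gamma$ into $C^\infty$ with compact spatial support on $[s,s+\delta]$ (uniformly in $s$), so its $L^p_tL^q_x$ norm is dominated by $\|f\|_{\dot{\mathcal H}_\gamma}$ via Sobolev embedding.

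Finally, summing the two half-wave contributions and using $\|g_+\|_{\dot H^\gamma}+\|g_-\|_{\dot H^\gamma}\lesssim\|f\|_{\dot{\mathcal H}_\gamma}$ gives the claimed bound for $\psi(\mathcal U(t,s)f)_1$, with $\delta$ and $C$ independent of $s$ and $f$. The main obstacle I anticipate is establishing the \emph{uniformity in $s$} of all the parametrix data: one must check that the time interval $\delta$ on which the eikonal equation remains solvable, and the constants bounding $\varphi_\pm$ and $b_\pm$ in the relevant $C^k$ seminorms, do not degenerate as $s$ varies. This is where the periodicity hypothesis (iii) is essential — it reduces the verification to the compact time set $[0,T]$ — but one still has to track carefully that Kapitanski's constants depend only on finitely many seminorms of the phase and amplitude, which are then bounded on $[0,T+\delta]$. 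A secondary technical point is the clean separation of the true FIO part from the pseudodifferential lower-order and smoothing terms in the half-wave factorization, which requires a standard but somewhat tedious symbol calculus argument localized away from $\xi=0$ combined with the finite speed of propagation to keep everything compactly supported in $x$.
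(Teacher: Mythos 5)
Your proposal follows essentially the same route as the paper: a local FIO parametrix with phases solving the eikonal equations $\partial_t\phi^\pm\pm\sqrt{a(t,x)}\,\vert\nabla_x\phi^\pm\vert=0$ and amplitudes from transport equations, with uniformity in $s$ obtained from $T$-periodicity and compactness of $[0,T]\times\mathrm{supp}\,\psi\times S^{n-1}$, followed by Kapitanski's Strichartz estimates for Fourier integral operators (using that $\mathrm{rank}\,\partial_\xi^2\partial_t\phi^\pm=n-1$ gives the admissibility condition (5.1)). The only cosmetic differences are that the paper builds the parametrix directly by the WKB ansatz for the second-order operator rather than via a first-order half-wave factorization, and that it constructs the representation for $U(s,t)\psi$ first and passes to $\psi U(t,s)$ by taking adjoints, so that the phase depends on the integration variable as required by its version of Kapitanski's lemma.
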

 Notice that for $h=(h_1,h_2)$ we consider that $(h)_1=h_1$. We admit this result to complete the estimates of $\chi u$. We  apply  an argument similar to that used by  N.Burq in [1]. First, as in [1], consider a cut-off functions $\phi\in\mathcal{C}^\infty_0({\R})$ such that supp$\phi\subset]0,\delta[$, $0\leq\phi\leq1$ and \newline$\phi(t)=1,\quad  t\in[\frac{\delta}{4},\frac{3\delta}{4}]$.
Set \[\phi_\nu(t)=\phi(t-\frac{\nu\delta}{2}),\ \ \nu\in\mathbb{Z}.\]
Clearly we have   supp$\phi_\nu\bigcap$supp$\phi_{\nu+2}=\varnothing$ and 
\[1\leq \sum_{\nu=-\infty}^{+\infty}\phi_\nu(t)\leq2,\quad  t\in{\R}.\]
We will  apply Theorem 5 to establish the following.
\begin{lem}
 Let $\tau>0$ and let $2\leq p,q<+\infty$ satisfy the conditions (5.1) with $\gamma=1$. Then we have
\[\Vert (\chi\mathcal{U}(t,\tau)f)_1\Vert_{L^p([\tau,\tau+\delta],L^q({\R}^n))}\leq C\Vert f\Vert_{\B}\]
with $C>0$ independent of $\tau$ and $f$.
\end{lem}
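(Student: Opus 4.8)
The plan is to reduce the claimed estimate on the interval $[\tau,\tau+\delta]$ to the local Strichartz estimate of Theorem 5 applied at a single initial time. First I would observe that it suffices to prove the estimate for $\tau\in[0,T]$: indeed, by the time-periodicity property (i) of Proposition 1, $\mathcal{U}(t,\tau)=\mathcal{U}(t-kT,\tau-kT)$ for any $k\in\mathbb{Z}$, so shifting $\tau$ by a multiple of $T$ into $[0,T]$ changes neither the left-hand side (after the corresponding change of variable $t\mapsto t-kT$ in the $L^p$ norm in time) nor the right-hand side. Hence from now on $\tau\in[0,T]$. Next, $\chi\in\CI$ with $\mathrm{supp}\,\chi\subset B_{\rho+1}$, so $\chi$ plays the role of the cut-off $\psi$ in Theorem 5. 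Since $p,q$ satisfy (5.1) with $\gamma=1$, Theorem 5 gives
\[
\int_\tau^{\tau+\delta}\Vert \chi\,(\mathcal{U}(t,\tau)g)_1\Vert_{L^q(\R^n)}^p\,\mathrm{d}t\leq C(T,\chi,\delta,p,q,n)\Vert g\Vert_{\dot{\mathcal H}_1}^p
\]
with $C$ independent of $\tau\in[0,T]$ and of $g$.

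The only remaining point is to pass from the $\dot{\mathcal H}_1$-norm of $g$ on the right to the $\B=\dot{\mathcal H}_1$-norm of $f$, i.e. to verify that $\Vert \mathcal{U}(\tau,0)f\Vert_{\dot{\mathcal H}_1}\leq C\Vert f\Vert_{\B}$ with $C$ uniform in $\tau\in[0,T]$; applying the displayed inequality with $g=\mathcal{U}(\tau,0)f$ and using $\mathcal{U}(t,\tau)\mathcal{U}(\tau,0)=\mathcal{U}(t,0)$ then yields exactly the assertion of the lemma. The bound $\Vert \mathcal{U}(\tau,0)\Vert_{\mathcal{L}(\B)}\leq C$ for $\tau\in[0,T]$ follows from the standard energy estimate for the wave equation with the metric $a(t,x)$: since $a$ and its derivatives are bounded with $a\geq c_0>0$, Gronwall's inequality gives $\Vert\mathcal{U}(\tau,0)\Vert_{\mathcal{L}(\B)}\leq e^{C\tau}$, which is bounded on the compact interval $[0,T]$. (Alternatively one invokes that $\mathcal{U}(t,s)$ is a bounded propagator on $\dot{\mathcal H}_1$, a property recalled from [18].)

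There is essentially no serious obstacle here; the lemma is a bookkeeping step that repackages Theorem 5 in a form (uniform in the left endpoint $\tau$, with the right-hand side in terms of the data $f$ at time $0$) convenient for the forthcoming summation over the intervals $[\nu\delta/2,\nu\delta/2+\delta]$ via the partition $\{\phi_\nu\}$. The mild point to be careful about is the uniformity of all constants in $\tau$: this is guaranteed by the periodicity reduction to $\tau\in[0,T]$ together with the uniformity already built into Theorem 5 and the continuity in time of the energy, so no constant is allowed to depend on $\tau$ or $f$.
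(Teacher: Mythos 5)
Your proposal is correct and follows essentially the same route as the paper: reduce to $\tau\in[0,T]$ by the $T$-periodicity of $\mathcal{U}$ (property (i) of Proposition 1, with the change of variable $t\mapsto t-kT$), then apply Theorem~5 with $\psi=\chi$, whose constant is already uniform in the initial time $s\in[0,T]$. Note, however, that your first displayed inequality already \emph{is} the lemma (with $g$ renamed $f$), since in the statement $f$ is the Cauchy data at time $\tau$, not at time $0$; the additional paragraph composing with $\mathcal{U}(\tau,0)$ and invoking an energy/Gronwall bound is therefore superfluous and proves a different (composed) estimate rather than ``exactly the assertion of the lemma.''
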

\begin{proof}
Take $k\in\mathbb{N}$ and $s$ such that $kT\leq \tau<(k+1)T$ and $s=\tau-kT\in[0,T]$. We get
\[\mathcal{U}(t,\tau)f=\mathcal{U}(t-kT,s)f.\]
Then
\[\int_\tau^{\tau+\delta}\Vert(\chi\mathcal{U}(t,\tau)f)_1\Vert_{L^q({\R}^n)}^p\textrm{d}t=\int_s^{s+\delta}\Vert(\chi\mathcal{U}(t',s)f)_1\Vert_{L^q({\R}^n)}^p\textrm{d}t'\]
and Theorem 5 implies
\[\int_s^{s+\delta}\Vert(\chi\mathcal{U}(t',s)f)_1\Vert_{L^q({\R}^n)}^p\textrm{d}t'\leq C\Vert f\Vert^p_{\B},\]
with $C>0$ independent on $s$. 
\end{proof}

\begin{prop}
Let $2\leq p,q\leq\infty$ satisfy condition (5.1) with $\gamma=1$  and let\\
 $\psi\in{\CI}$. Then for all $g\in L^1([0,T_1],L^2({\R}^n))$ we have 
\[\left\Vert\int_\tau^t\psi \left(\mathcal{U}(t,s)\left(\begin{array}{l}0\\ g(s)\\ \end{array}\right)\right)_1\textrm{d}s\right\Vert_{L^p([\tau,\delta+\tau],L^q({\R}^n))}\leq C\Vert g\Vert_{L^1([\tau,\delta+\tau],L^2({\R}^n))}\]
with $C>0$ independent of $g$ and $\tau$.\end{prop}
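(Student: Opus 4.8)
The plan is to deduce this inhomogeneous bound from the homogeneous local Strichartz estimate (Theorem 5) via Duhamel's formula and Minkowski's integral inequality; since the source term is measured in $L^1$ in time, no Christ--Kiselev type argument will be needed here (in contrast with Lemma 1).

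First I would fix $s$ and observe that $t\mapsto\big(\mathcal{U}(t,s)(0,g(s))\big)_1$ is the first component of the solution of the homogeneous problem (1.1) with Cauchy data $(0,g(s))$ at time $s$. Repeating the argument of Lemma 4 --- which uses only Theorem 5, valid for an arbitrary $\psi\in\CI$, together with the $T$-periodicity $\mathcal{U}(t,s)=\mathcal{U}(t-kT,s-kT)$ to bring $s$ back into $[0,T)$ --- yields, for every $\psi\in\CI$ and every $s>0$,
\[\Big\Vert\big(\psi\,\mathcal{U}(\cdot,s)(0,g(s))\big)_1\Big\Vert_{L^p([s,s+\delta],L^q({\R}^n))}\leq C(T,\psi,\delta,p,q,n)\,\Vert g(s)\Vert_{L^2({\R}^n)},\]
where $C$ is independent of $s$ and of $g$, and where we used $\Vert(0,g(s))\Vert_{\B}=\Vert g(s)\Vert_{L^2({\R}^n)}$ since $\B=\dot H^1\times L^2$.

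Next I would write $\int_\tau^t(\cdots)\,\textrm{d}s=\int_\tau^{\tau+\delta}1_{\{s\leq t\}}(\cdots)\,\textrm{d}s$ and apply Minkowski's integral inequality in the space $L^p([\tau,\tau+\delta],L^q({\R}^n))$, obtaining
\[\left\Vert\int_\tau^t\psi\Big(\mathcal{U}(t,s)\left(\begin{array}{l}0\\ g(s)\end{array}\right)\Big)_1\textrm{d}s\right\Vert_{L^p([\tau,\tau+\delta],L^q)}\leq\int_\tau^{\tau+\delta}\Big\Vert 1_{\{s\leq t\}}\big(\psi\,\mathcal{U}(t,s)(0,g(s))\big)_1\Big\Vert_{L^p_t([\tau,\tau+\delta],L^q)}\textrm{d}s.\]
For $s\in[\tau,\tau+\delta]$ one has $[\tau,\tau+\delta]\cap\{t\geq s\}=[s,\tau+\delta]\subset[s,s+\delta]$, so the inner norm is at most $\big\Vert\big(\psi\,\mathcal{U}(\cdot,s)(0,g(s))\big)_1\big\Vert_{L^p([s,s+\delta],L^q)}\leq C\Vert g(s)\Vert_{L^2({\R}^n)}$ by the previous step; integrating in $s$ then gives the claimed bound $C\Vert g\Vert_{L^1([\tau,\tau+\delta],L^2({\R}^n))}$.

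The only points needing care are the uniformity of $C$ in $\tau$ --- which, exactly as in Lemma 4, comes from the $T$-periodicity of $\mathcal{U}$ --- and the remark that Theorem 5, unlike the statement of Lemma 4, is already phrased for a general cut-off $\psi\in\CI$, so it applies verbatim; the endpoint $q=\infty$ (possible only for $n=3$, $p=2$) is not used later and can in any case be recovered by interpolation. I expect the slightly delicate but harmless book-keeping of the time intervals in the Minkowski step --- namely the inclusion $[\tau,\tau+\delta]\cap\{t\geq s\}\subset[s,s+\delta]$ --- to be the only real thing to verify.
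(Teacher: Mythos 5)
Your argument is exactly the paper's: Minkowski's integral inequality followed by the uniform-in-$\tau$ homogeneous local Strichartz estimate (which is Lemma~3 of the paper, not Lemma~4 as you cite it --- a harmless numbering slip), with your interval bookkeeping $[\tau,\tau+\delta]\cap\{t\geq s\}\subset[s,s+\delta]$ simply making explicit what the paper leaves implicit. Correct, and essentially identical to the paper's proof.
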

\begin{proof}
We have
\[\begin{array}{c}\left\Vert\int_\tau^t\psi \left(\mathcal{U}(t,s)\left(\begin{array}{l}0\\ g(s)\\ \end{array}\right)\right)_1\textrm{d}s\right\Vert_{L^p([\tau,\delta+\tau],L^q({\R}^n))}\\
\leq\int_\tau^{\tau+\delta}\left\Vert1_{\{t>s\}}\psi \left(\mathcal{U}(t,s)\left(\begin{array}{l}0\\ g(s)\\ \end{array}\right)\right)_1\right\Vert_{L_t^p([\tau,\delta+\tau],L^q)}\textrm{d}s.\end{array}\]
Thus Lemma 3 implies
\[\left\Vert\int_\tau^t\psi \left(\mathcal{U}(t,s)\left(\begin{array}{l}0\\ g(s)\\ \end{array}\right)\right)_1\textrm{d}s\right\Vert_{L^p([\tau,\delta+\tau],L^q({\R}^n))}\leq C\Vert g\Vert_{L^1([\tau,\delta+\tau],L^2({\R}^n)}\]
with $C>0$ independent of $g$ and $\tau$.
\end{proof}

Consider $v_\nu=\phi_\nu\chi u$, $I_\nu=]\frac{\nu\delta}{2},\frac{\nu\delta}{2}+\delta[\bigcap{\R}^+$ and
 \[u_\nu=[\partial^2_t,\phi_\nu]\chi u-[\Div_x(a\nabla_x\cdot),\chi]\phi_\nu u.\]
We see that $v_\nu$ is the solution of the problem
\begin{equation} \label{eq:1}  \left\{\begin{array}{c}
\partial^2_t(v_\nu)-\Div_{x}(a(t,x)\nabla_{x}v_\nu)=u_\nu,\\
(v_\nu,\partial_t(v_\nu))(0)=g_\nu,\end{array}\right.\end{equation}
with $g_{-1}=\chi f$ and $g_\nu=0$ for $\nu\neq-1$. Let $\beta\in{\CI}$ be such that $\beta=1$ on $supp \chi$. We deduce that
\[v_\nu(t)=\beta v_\nu(t)=(\beta \mathcal{U}(t,0)g_\nu )_1+\int_0^t\left(\beta \mathcal{U}(t,s)\left(\begin{array}{l} 0\\ u_\nu(s)\\ \end{array}\right)\right)_1\textrm{d}s.\]
Now let $p,\ q,$ satisfy condition (5.1) with $\gamma=1$.  Applying  Theorem 5 and Proposition 6,  we get
\[\Vert v_\nu\Vert_{L^p({\R}^+,L^q({\R}^n))}=\Vert v_\nu\Vert_{L^p(I_\nu,L^q({\R}^n))}\leq C(\Vert g_\nu\Vert_{{\B}} +\Vert u_\nu\Vert_{L^1({\R}^+,L^2({\R}^n))}).\]
In the same way
\[\Vert (v_\nu,\partial_t(v_\nu))\Vert_{\mathcal{C}({\R}^+,{\B})}\leq C(\Vert g_\nu\Vert_{{\B}} +\Vert u_\nu\Vert_{L^1({\R}^+,L^2({\R}^n))}).\]
Using   Minkowski's inequality, we find
\[\left\Vert\left(\sum_{\nu=-N}^{N} \vert v_\nu\vert\right)\right\Vert_{L^q({\R}^n)}\leq\sum_{\nu=-N}^{N} \Vert v_\nu\Vert_{L^q({\R}^n)}\leq\sum_{\nu=-\infty}^{+\infty} \Vert v_\nu\Vert_{L^q({\R}^n)},\   N\in\mathbb{N}.\]
Thus, taking the limit $N\rightarrow+\infty$, we get
\[\left\Vert\left(\sum_{\nu=-\infty}^{+\infty} \vert v_\nu\vert\right)\right\Vert_{L^q({\R}^n)}=\lim_{N\to\infty}\left\Vert\left(\sum_{\nu=-N}^{N} \vert v_\nu\vert\right)\right\Vert_{L^q({\R}^n)}\leq\sum_{\nu=-\infty}^{+\infty} \Vert v_\nu\Vert_{L^q({\R}^n)}\]
and we obtain
\[\Vert \chi u\Vert_{L^q({\R}^n)}\leq\left\Vert\left(\sum_{\nu=-\infty}^{+\infty} \vert v_\nu\vert\right)\right\Vert_{L^q({\R}^n)}\leq\sum_{\nu=-\infty}^{+\infty} \Vert v_\nu\Vert_{L^q({\R}^n)}.\]
On the other hand, $p\geq 1$ and we have
\[\sum_{\nu=-\infty}^{+\infty}\Vert v_\nu(t,.)\Vert_{L^q({\R}^n)}=\Vert v_{l-1}(t,.)\Vert_{L^q({\R}^n)}+\Vert v_{l}(t,.)\Vert_{L^q({\R}^n)}+\Vert v_{l+1}(t,.)\Vert_{L^q({\R}^n)},\ \   t\in I_l\]
and we deduce that for all $\nu\in\mathbb{Z}$
\[\begin{array}{lll}\int_{I_\nu}\Vert\chi u(t,.)\Vert_{L^q({\R}^n)}^p\textrm{d}t\leq C&(&\int_{{\R}^+}((\Vert v_{\nu-1}(t,.)\Vert_{L^q({\R}^n)})^p+(\Vert v_{\nu}(t,.)\Vert_{L^q({\R}^n)})^p+\\

\ &\ &+(\Vert v_{\nu+1}(t,.)\Vert_{L^q({\R}^n)})^p)\textrm{d}t).\end{array}\]
Consequently, 
\[\int_{{\R}^+}\Vert\chi u(t,.)\Vert_{L^q({\R}^n)}^p\leq \sum_{\nu=-\infty}^{+\infty}\int_{I_\nu}\Vert u(t,.)\Vert_{L^q({\R}^n)}^p\leq C\sum_{\nu=-\infty}^{+\infty}\Vert v_\nu\Vert_{L^p({\R}^+,L^q({\R}^n))}^p,\]
and
\[\sum_{\nu=-\infty}^{+\infty} \Vert v_\nu\Vert_{L^p({\R}^+,L^q({\R}^n))}^p\leq C\sum_{\nu=-\infty}^{+\infty}(\Vert u_\nu\Vert_{L^1({\R}^+,L^2({\R}^n))}+\Vert g_\nu\Vert_{\B})^p.\]
It is clear there exists $C>0$ independent of $\nu$ such that
\[
\Vert u_\nu\Vert_{L^1({\R}^+,L^2({\R}^n))}\leq C\Vert u_\nu\Vert_{L^2({\R}^+,L^2({\R}^n))}\]
and we find
\[\sum_{\nu=-\infty}^{+\infty} \Vert v_\nu\Vert_{L^p({\R}^+,L^q({\R}^n))}^p\leq C
\sum_{\nu=-\infty}^{+\infty}((\Vert u_\nu\Vert_{L^2({\R}^+,L^2({\R}^n))}^2+\Vert g_\nu\Vert_{\B}^2)^{\frac{p}{2}}).\]

\begin{lem}
Let $r\geq1$. Then for all complex valued sequences $(a_k)_{k\in\mathbb{Z}}$ we have
\[\sum_{k=-\infty}^{+\infty}\vert a_k\vert^r\leq\left(\sum_{k=-\infty}^{+\infty}\vert a_k\vert\right)^r.\]
\end{lem}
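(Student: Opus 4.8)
The statement to prove is the elementary inequality in Lemma~4: for $r\geq 1$ and any complex sequence $(a_k)_{k\in\mathbb{Z}}$,
\[
\sum_{k=-\infty}^{+\infty}\vert a_k\vert^r\leq\left(\sum_{k=-\infty}^{+\infty}\vert a_k\vert\right)^r.
\]

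\textbf{Approach.} The plan is to reduce everything to the finite-dimensional case and then to a single-variable monotonicity argument. First I would set $S=\sum_k\vert a_k\vert$. If $S=+\infty$ the right-hand side is $+\infty$ and there is nothing to prove, so I may assume $S<\infty$; if $S=0$ then every $a_k=0$ and the inequality is trivial, so I may assume $0<S<\infty$. Dividing both sides by $S^r$, it suffices to prove that if $b_k=\vert a_k\vert/S\geq 0$ satisfy $\sum_k b_k=1$, then $\sum_k b_k^r\leq 1$. Since each $b_k\in[0,1]$ and $r\geq 1$, we have $b_k^r\leq b_k$ for every $k$; summing over $k$ gives $\sum_k b_k^r\leq\sum_k b_k=1$, which is exactly what is needed. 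This is the whole proof; the only mildly delicate point is handling the convergence/normalization bookkeeping for an infinite index set, which is why I isolate the cases $S=\infty$ and $S=0$ first and otherwise work with the normalized sequence $b_k$, noting that $\sum_k b_k^r$ is a sum of nonnegative terms dominated term-by-term by the convergent series $\sum_k b_k$, hence itself convergent with the claimed bound.

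\textbf{Main obstacle.} There is essentially no obstacle: the inequality $t^r\leq t$ for $t\in[0,1]$, $r\geq 1$, is immediate from the fact that $t\mapsto t^{r-1}$ is nondecreasing on $[0,1]$ with value $\leq 1$ there (or from $r\log t\leq\log t$ since $\log t\leq 0$). The only thing to be careful about is not to divide by $S$ before checking $S\neq 0$, and to phrase the infinite-sum step as a supremum over finite partial sums so that no rearrangement issues arise. In the application (bounding $\sum_\nu(\Vert u_\nu\Vert_{L^2L^2}^2+\Vert g_\nu\Vert_{\B}^2)^{p/2}$ by $(\sum_\nu(\dots))^{p/2}$ with $r=p/2\geq 1$) only the case of a nonnegative sequence with finitely many nonzero $g_\nu$ is used, but the lemma as stated costs nothing extra to prove in full generality.
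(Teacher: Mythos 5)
Your proof is correct and follows essentially the same route as the paper: normalize so that $\sum_k\vert a_k\vert=1$, use $t^r\leq t$ for $t\in[0,1]$ and $r\geq1$ termwise, then rescale by $\alpha=\sum_k\vert a_k\vert$. The only difference is that you explicitly dispose of the degenerate cases $S=0$ and $S=+\infty$, which the paper leaves implicit.
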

\begin{proof}
Let $\sum_{-\infty}^{+\infty}\vert a_k\vert=1$. Then \[\vert a_k\vert^r\leq \vert a_k\vert,\quad  k\in\mathbb{Z}\] and
\[\sum_{-N}^{+N}\vert a_k\vert^r\leq\sum_{-N}^{+N}\vert a_k\vert,\]
which implies 
\[\sum_{-\infty}^{+\infty}\vert a_k\vert^r\leq 1.\]
Now set $\alpha=\sum_{-\infty}^{+\infty}\vert a_k\vert$ and consider $b_k=\frac{a_k}{\alpha}$. We obtain
\[\frac{\sum_{-\infty}^{+\infty}\vert a_k\vert^r}{\alpha^r}\leq1\]
and 
\[\sum_{-\infty}^{+\infty}\vert a_k\vert^r\leq(\sum_{-\infty}^{+\infty}\vert a_k\vert)^r.\]
\end{proof}

Since $\frac{p}{2}\geq1$ we deduce from Lemma 4 the estimate
\[\sum_{\nu=-\infty}^{+\infty}(\Vert u_\nu\Vert_{L^2({\R}^+,L^2({\R}^n))}^2+\Vert g_\nu\Vert_{\B}^2)^{\frac{p}{2}}\leq\left(\sum_{\nu=-\infty}^{+\infty}(\Vert u_\nu\Vert_{L^2({\R}^+,L^2({\R}^n))}^2+ +\Vert g_\nu\Vert_{\B}^2)\right)^{\frac{p}{2}}.\]
Let $\beta\in{\CI}$ be a cut-off function such that $\beta=1$ on supp$\chi$. It follows from Theorem 4 that
\[\sum_{\nu=-\infty}^{+\infty}\Vert u_\nu\Vert^2_{L^2({\R}^+,L^2({\R}^n))}\leq C\int_0^{+\infty}\Vert\beta(u,u_t)(t)\Vert_{\B}^2\textrm{d}t\leq C(\rho,n,T)(\Vert f\Vert_{\B})^2\]
and
\[\sum_{\nu=-\infty}^{+\infty}\Vert g_\nu\Vert^2=\Vert\chi f\Vert^2.\]
This result shows  that
\begin{equation} \label{eq:2}\Vert \chi u\Vert_{L^p({\R}^+,L^q)}\leq\left(\sum_{\nu=-\infty}^{+\infty}\Vert u_\nu\Vert_{L^2({\R}^+,L^2({\R}^n))}^2+\Vert g_\nu\Vert_{\B}^2\right)^{\frac{1}{2}}\leq C\Vert f\Vert_{\B}.\end{equation}
\begin{prop}
Let $n\geq3$ be odd and (H1), (H2) be fulfilled. Then, for all $f\in{\B}$ and for all $t>0$ we have
\begin{equation} \label{eq:2}\Vert \chi\U(t,0)f\Vert_{\B}\leq C(\chi,n,\rho,T)\Vert f\Vert_{\B}\end{equation}
with $C(\chi,n,\rho,T)$ independent on $t$ and $f$.
\end{prop}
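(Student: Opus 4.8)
The plan is to reuse the family $v_\nu=\phi_\nu\chi u$ constructed above, together with the elementary remark that at each time the function $\chi u$ is reproduced by a single $v_\nu$, so that the estimate for $v_\nu$ — uniform in $\nu$ — transfers into a bound for $\chi\,\U(t,0)f$ uniform in $t$. Concretely, since $\phi\equiv1$ on $[\frac{\delta}{4},\frac{3\delta}{4}]$ we also have $\phi'\equiv0$ there, so the closed intervals $J_\nu=\big[\tfrac{\nu\delta}{2}+\tfrac{\delta}{4},\,\tfrac{\nu\delta}{2}+\tfrac{3\delta}{4}\big]$, on each of which $\phi_\nu\equiv1$ and $\phi_\nu'\equiv0$, cover $\R$. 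For every $t\geq0$ I would pick $\nu=\nu(t)$ with $t\in J_{\nu(t)}$ and observe, using that $\chi$ does not depend on $t$, that
\[(v_{\nu(t)},\partial_t v_{\nu(t)})(t)=\big(\phi_{\nu(t)}\chi u,\ \phi_{\nu(t)}'\chi u+\phi_{\nu(t)}\chi\partial_t u\big)(t)=(\chi u,\chi\partial_t u)(t)=\chi\,\U(t,0)f.\]

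Next I would feed in the two bounds already established for $v_\nu$. From the Duhamel representation attached to the Cauchy problem satisfied by $v_\nu$, localised by a cut-off $\beta\in\CI$ equal to $1$ on the support of $\chi$, and from Theorem 2, one has the $\nu$-uniform estimate
\[\Vert(v_\nu,\partial_t v_\nu)\Vert_{\mathcal{C}(\R^+,\B)}\leq C\big(\Vert g_\nu\Vert_{\B}+\Vert u_\nu\Vert_{L^1(\R^+,L^2(\R^n))}\big),\]
and it remains to bound the right-hand side uniformly in $\nu$. Since $g_\nu$ equals $\chi f$ for $\nu=-1$ and vanishes otherwise, $\Vert g_\nu\Vert_{\B}\leq\Vert\chi f\Vert_{\B}\leq C\Vert f\Vert_{\B}$. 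And since $u_\nu$ is supported in time in the interval $I_\nu$, of length $\leq\delta$, the Cauchy--Schwarz inequality gives $\Vert u_\nu\Vert_{L^1(\R^+,L^2)}\leq\delta^{1/2}\Vert u_\nu\Vert_{L^2(\R^+,L^2)}$, while the square-summable estimate obtained above from Theorem 4,
\[\sum_{\nu}\Vert u_\nu\Vert_{L^2(\R^+,L^2(\R^n))}^2\leq C\int_0^{+\infty}\Vert\beta(u,u_t)(t)\Vert_{\B}^2\,\textrm{d}t\leq C\Vert f\Vert_{\B}^2,\]
yields in particular $\Vert u_\nu\Vert_{L^2(\R^+,L^2)}\leq C\Vert f\Vert_{\B}$ for every $\nu$.

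Combining these facts, for every $t\geq0$,
\[\Vert\chi\,\U(t,0)f\Vert_{\B}=\Vert(v_{\nu(t)},\partial_t v_{\nu(t)})(t)\Vert_{\B}\leq C\big(\Vert g_{\nu(t)}\Vert_{\B}+\Vert u_{\nu(t)}\Vert_{L^1(\R^+,L^2)}\big)\leq C\Vert f\Vert_{\B},\]
with $C=C(\chi,n,\rho,T)$ independent of $t$ and $f$, which is the claimed estimate. I expect the only delicate point to be the observation of the first paragraph — namely that the time cut-offs $\phi_\nu$ can be arranged so that $\chi u$ together with its time derivative agrees with a single $v_\nu$ at each instant — which hinges on $\phi_\nu'$ vanishing throughout the plateau $J_\nu$, including its endpoints; once this is granted the estimate is immediate from the material already developed, the substantive ingredient being the $L^2$-in-time integrability of the local energy provided by Theorem 4.
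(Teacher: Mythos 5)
Your argument is correct, but it follows a genuinely different route from the paper's. The paper proves this proposition by splitting the datum, $\chi\,\mathcal{U}(t,0)f=\chi\,\mathcal{U}(t,0)\chi f+\chi\,\mathcal{U}(t,0)(1-\chi)f$, estimating the first term by the exponential decay of Theorem 2 (since $\chi f$ is compactly supported) and the second by repeating, pointwise in $t$, the free-wave comparison argument of Section 3 (the Duhamel formula for $u_2$, Theorem 2 applied to the compactly supported source $[\Delta_x,\chi]u_0(s)$, and the Smith--Sogge estimate of Corollary 1). You instead exploit the $v_\nu=\phi_\nu\chi u$ machinery: your observation that the plateaux $J_\nu$ of the $\phi_\nu$ cover $\R$ and that $\phi_\nu'$ vanishes there (including at the endpoints, since these are maxima of the differentiable function $\phi_\nu$) is sound, as is the uniform-in-$\nu$ control $\Vert g_\nu\Vert_{\B}\leq C\Vert f\Vert_{\B}$ and $\Vert u_\nu\Vert_{L^1L^2}\leq\delta^{1/2}\Vert u_\nu\Vert_{L^2L^2}\leq C\Vert f\Vert_{\B}$ coming from Theorem 4. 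What your route buys is economy: given the Section 5 apparatus, the proposition drops out in a few lines, and it makes transparent that the pointwise-in-time energy bound is really a byproduct of the same localization that yields the $L^pL^q$ estimate. What it costs is that you lean on the bound $\Vert(v_\nu,\partial_t v_\nu)\Vert_{\mathcal{C}(\R^+,\B)}\leq C(\Vert g_\nu\Vert_{\B}+\Vert u_\nu\Vert_{L^1(\R^+,L^2)})$, which the paper only asserts with ``in the same way''; its justification (apply Theorem 2 to the $\beta$-localized Duhamel representation, using that $g_\nu$ and $(0,u_\nu(s))$ are supported in $\mathrm{supp}\,\chi$) is essentially the same ingredient the paper uses directly. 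So your proof is not circular, but it is not more elementary either --- it repackages the exponential-decay input rather than avoiding it. Both arguments are valid.
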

\begin{proof}
Let $f\in{\B}$ and let $t>0$.
Notice that
\[\chi \mathcal{U}(t,0)f=\chi \mathcal{U}(t,0)\chi f+ \chi \mathcal{U}(t,0)(1-\chi)f.\]
Theorem 2 implies
\[ \Vert\chi \mathcal{U}(t,0)\chi f\Vert_{\B}\leq C(\chi,n,\rho) e^{-\delta t}\Vert f\Vert_{\B}\leq C'\Vert f\Vert_{\B},\]
with $C'>0$ independent on $t$ and $f$. Moreover, as in (3.1) we have

\[\Vert \chi \mathcal{U}(t,0)(1-\chi)f\Vert_{\B}\leq C\Vert f\Vert_{\B},\]
with $C>0$ independent on $t$ and $f$. 
\end{proof}
In conclusion, estimates (5.3) and (5.4) imply that if $2\leq p,q\leq\infty$ satisfy condition (5.1) with $\gamma=1$, for all $t>0$ we have 
\[\Vert\chi u\Vert_{L^p({\R}^+,L^q({R}^n))}+ \Vert\chi u(t)\Vert_{\dot{H}^1({\R}^n)}+\Vert \partial_t(\chi u)(t)\Vert_{L^2({\R}^n)}\leq C(\rho,T,n,p,q)(\Vert f_1\Vert_{\dot{H}^1({\R}^n)}+\Vert f_2\Vert_{L^2({\R}^n)}).\]

\section{Proof of Theorem 1} 
\renewcommand{\theequation}{\arabic{section}.\arabic{equation}}
\setcounter{equation}{0}
In this  section we will apply  the results of previous sections to establish  Theorem 1. First, Proposition 6 implies that for $2\leq p,q<+\infty$ satisfying
\[p>2,\quad \frac{1}{p}=n\left(\half-\frac{1}{q}\right)-1,\quad\textrm{et}\quad \frac{1}{p}\leq \frac{n-1}{2}\left(\half-\frac{1}{q}\right)\]
 and for all $t>0$ we have
\[\Vert(1-\chi)u\Vert_{L^p({\R}^+,L^q({\R}^n))}+\Vert(1-\chi)(u,u_t)(t)\Vert_{\B}\leq C(\rho,n,p,q,T)\Vert f\Vert_{\B}.\]
Also, for $2\leq p,q<+\infty$ satisfying
\[\frac{1}{p}= \frac{n(q-2)}{2q}-1=n\left(\half-\frac{1}{q}\right)-1,\quad\textrm{and}\quad \frac{1}{p}\leq\left(\frac{n-1}{2}\right)\left(\frac{q-2}{2q}\right)= \frac{n-1}{2}\left(\half-\frac{1}{q}\right)\]
 and for all $t>0$ we deduce from the results of Section 5 the estimates
\[\Vert\chi u\Vert_{L^p({\R}^+,L^q({\R}^n))}+\Vert\chi(u,u_t)(t)\Vert_{\B}\leq C(\rho,n,p,q,T)\Vert f\Vert_{\B}.\]
Consequently, for $2\leq p,q<+\infty$ such that
\begin{equation}\label{eq:1}p>2,\quad \frac{1}{p} =\frac{n(q-2)}{2q}-1,\quad\textrm{and}\quad \frac{1}{p}\leq\left(\frac{n-1}{2}\right)\left(\frac{q-2}{2q}\right),\end{equation}
 and all $t>0$ we get
\[\Vert u\Vert_{L^p({\R}^+,L^q({\R}^n))}+\Vert(u,u_t)(t)\Vert_{\B}\leq C(\rho,n,p,q,T)\Vert f\Vert_{\B}.\]
\textit{Proof of Theorem 1:}
We know that $p$ and $q$  satisfy (6.1)   if
\[0<\frac{1}{p}=\frac{n(q-2)}{2q}-1<\half\]
and
\[\frac{1}{p}\leq\left(\frac{n-1}{2}\right)\left(\frac{q-2}{2q}\right).\]
Consequently, $q$  satisfies
\begin{equation}\label{eq:1}\left\{\begin{array}{lll}
(n-3)q\leq2(n+1),\\
(n-3)q< 2n,\\
q>\frac{2n}{n-2},\\
\end{array}\right.\end{equation}
and $\frac{1}{p}=\frac{n(q-2)}{2q}-1$.
For $n=3$ the conditions (6.1) are fulfilled for $q>\frac{2n}{n-2}=6$,  and $\frac{1}{p}=\frac{n(q-2)}{2q}-1$. 
For $n\geq5$ odd , we find that $q$ satisfies (6.2)  if 
\[\frac{2n}{n-2}<q<\frac{2n}{n-3}.\]
Therefore, if  $\frac{2n}{n-2}<q<\frac{2n}{n-3}$ and $\frac{1}{p}=\frac{n(q-2)}{2q}-1$ we conclude that $p$ and $q$ satisfy  conditions (6.1).$\square$

\section{ Local Strichartz estimates for  solutions of the disturbed wave equation }
\renewcommand{\theequation}{\arabic{section}.\arabic{equation}}
\setcounter{equation}{0}
 The purpose of this section is to prove  Theorem 5. To establish this result we will show that we can approximate the solutions of the problem (1.1) by Fourier integral operators. Then we will apply the Strichartz estimates  of Kapitanski (see [12]) for Fourier integral operators on Besov space to get the local Strichartz estimates.

\subsection{Approximation of the propagator $\mathcal{U}(t,\tau)$ by Fourier integral operators}

Consider the operators $U(t,s)$ and $V(t,s)$ defined by
\[ U(t,s)f=\left(\mathcal{U}(t,s)\left(\begin{array}{l}f\\ 0\\ \end{array}\right)\right)_1,\quad f\in\dot{H}^\gamma({\R}^n),\]
\[ V(t,s)g=\left(\mathcal{U}(t,s)\left(\begin{array}{l}0\\ g\\ \end{array}\right)\right)_1,\quad g\in \dot{H}^{\gamma-1}({\R}^n).\]
We denote by $B^m$ the space 
\[B^m=\bigcap_{s\in{\R}}\mathcal{L}(H^s({\R}^n),H^{s-m}({\R}^n)).\]
Let $B(t)\in B^k$ for all $t\in[S_1,S_2]$. We say that the operator $B(t)$ depends on $t$ in an admissible fashion if $\partial_t^jB(t)\in B^{k+j}$, $j=1,2,...$ and
\[ \Vert\partial_t^jB(t)\Vert_{\mathcal{L}(H^s,H^{s-k-j})}\leq C_j,\quad  s\in{\R}\]
with $C_j>0$ independent of $t$, $S_1$ and $S_2$.
In this subsection we will  establish the following
\begin{Thm}
Let $\psi\in\mathcal{C}^\infty_0(\vert x\vert\leq R_1)$, $R_1>\rho$. Then there exists $\delta>0$ such that for $s,t\in[0,T]$ with $\vert s-t\vert<\delta$ and every integer $N\geq1$ we have the representation

\[\psi U(t,s)=\sum_{j=1}^M(\tilde{I}_j^+(t,s)+\tilde{I}_j^-(t,s))+ R_N(t,s),\]
where $\tilde{I}_j^\pm(t,s)$ are Fourier integral operators with kernels
\[\tilde{I}^\pm_k(t,s,x,y)=\int \tilde{b}_k^\pm(t,s,y,x,\xi)e^{-i\phi^\pm_k(t,s,y,\xi)+ix.\xi}\textrm{d}\xi\]
and $R_N(t,s)\in B^{-N}$ depends on $(t,s)$ in an admissible fashion. The amplitudes $\tilde{b}^\pm_j(t,s,y,x,\xi)$ have compact support with respect to $y$ and vanish for $\vert\xi\vert$ small. Moreover, $\tilde{b}_j^\pm$ and $\phi_j^\pm$ and their derivatives are uniformly  bounded for $s\in[0,T]$. A similar representation holds for $\psi V(t,s)$.
\end{Thm}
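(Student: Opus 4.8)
The plan is to build a geometric-optics parametrix for the first-order (in time) system associated to $\partial_t^2 - \Div_x(a\nabla_x)$, localized both in space by $\psi$ and in a short time interval, and then to absorb the remainder into the smoothing class $B^{-N}$ by a standard iteration. Since the principal symbol $\tau^2 - a(t,x)|\xi|^2$ factors (microlocally away from $\xi=0$) as $(\tau - \lambda_+(t,x,\xi))(\tau-\lambda_-(t,x,\xi))$ with $\lambda_\pm = \pm\sqrt{a(t,x)}|\xi|$, I would first pass to a $2\times 2$ first-order pseudodifferential system by conjugating with an elliptic symbol $\Lambda = \sqrt{-\Delta_x}$, so that $U(t,s)$ and $V(t,s)$ are, up to smoothing operators, the entries of the solution operator to a system $\partial_t w = i\Lambda(t) w$ with $\Lambda(t)$ a matrix of first-order $\Psi$DOs whose eigenvalues are $\lambda_\pm$. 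The operator $\psi U(t,s)$ is then a finite sum of FIOs associated to the two Hamilton flows of $\lambda_\pm(t,x,\xi)$.

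Second, I would construct the phases $\phi^\pm_j(t,s,y,\xi)$ by solving the eikonal (Hamilton–Jacobi) equations $\partial_t\phi^\pm = \lambda_\pm(t,x,\nabla_x\phi^\pm)$ with $\phi^\pm|_{t=s} = y\cdot\xi$ (here written in the mixed form $-\phi^\pm(t,s,y,\xi)+x\cdot\xi$ as in the statement). Because $a$ is smooth, bounded above and below, $T$-periodic, and equals $1$ for $|x|\ge\rho$, the bicharacteristics exist and stay in a fixed compact set of phase space for $|t-s|<\delta$ with $\delta$ small and uniform in $s\in[0,T]$; this is where the choice of $\delta$ comes from, and it guarantees that the generating function $\phi^\pm$ is globally defined and non-degenerate on the relevant conic neighborhood, with all derivatives bounded uniformly in $s\in[0,T]$ by compactness plus periodicity. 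The amplitudes $\tilde b^\pm_j$ are then found by solving the usual sequence of transport equations along the flow; one cuts off near $\xi=0$ (the construction is only valid in the elliptic region) and near the spatial support dictated by $\psi$ and finite speed of propagation, which yields the stated compact support in $y$ and vanishing for small $|\xi|$.

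Third, plugging the finite sum $\sum_j(\tilde I^+_j + \tilde I^-_j)$ into the equation produces an error that is a smoothing operator of some fixed negative order depending on how many transport equations were solved; by Duhamel's formula applied to the system $\partial_t w = i\Lambda(t)w$, this error is propagated by the exact (bounded) evolution over a time interval of length $<\delta$, so it remains in a negative-order class, and by taking the number $M$ of phases times the number of amplitude terms large enough one gets $R_N(t,s)\in B^{-N}$. The admissible dependence on $(t,s)$ — that is, $\partial_t^j R_N(t,s)\in B^{-N+j}$ with uniform bounds — follows by differentiating the Duhamel identity and the transport equations and using that $\Lambda(t)$ itself depends on $t$ in an admissible fashion (its symbol is built from $a(t,x)$, smooth and $T$-periodic). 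The representation for $\psi V(t,s)$ is obtained the same way, the only change being the initial data of the system, which shifts the leading amplitude by a factor of $\Lambda^{-1}$ and does not affect the phases.

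The main obstacle is the uniformity of all estimates in $s\in[0,T]$ (equivalently, for all $s\in\R$ by periodicity): one must check that the $\delta$ controlling the lifespan of the eikonal solution, and the bounds on all derivatives of $\phi^\pm_j$ and $\tilde b^\pm_j$, can be chosen independent of $s$. This is where periodicity of $a$ is essential — it reduces the question to $s$ in the compact interval $[0,T]$, on which continuity of the bicharacteristic flow in its parameters gives the required uniform non-degeneracy and bounds. The remaining work — setting up the transport equations, counting orders, and verifying the admissible $t$-dependence of the remainder — is routine symbol calculus once this uniform geometric picture is in place.
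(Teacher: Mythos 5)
Your overall strategy (short-time WKB parametrix, uniform bounds via periodicity and compactness in $s\in[0,T]$, Duhamel absorption of the error into $B^{-N}$) is the right general framework and matches the paper's treatment in spirit. But there is a genuine gap at the point where you claim the phase can simply be ``written in the mixed form $-\phi^{\pm}(t,s,y,\xi)+x\cdot\xi$ as in the statement.'' A direct forward WKB construction for $\psi U(t,s)$, with the eikonal equation solved in the variables $(t,x)$ and data $x\cdot\xi$ at $t=s$, produces kernels of the form $\int b(t,s,x,\xi)e^{i\phi(t,s,x,\xi)-iy\cdot\xi}\,\mathrm{d}\xi$: the phase is a function of the output variable $x$ and the amplitude is naturally localized in $x$. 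The theorem asks for the transposed form: phase depending only on the integration variable $y$, amplitude compactly supported in $y$. These are not interchangeable by relabelling: if the phase depends on $y$ and the kernel is linear in $x$ through $e^{ix\cdot\xi}$, then applying $\partial_t^2-\mathrm{div}_x(a(t,x)\nabla_x)$ to $e^{-i\phi(t,s,y,\xi)+ix\cdot\xi}$ cannot cancel at leading order, because $(\partial_t\phi)^2$ is independent of $x$ while $a(t,x)\vert\xi\vert^2$ is not, so the eikonal/transport hierarchy does not close in that form. The $y$-form is not cosmetic: it is exactly what Kapitanski's lemma requires later (``the proof is the same since the phase $\phi$ does not depend on $x$'').

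The paper closes this gap by a duality step that is missing from your outline: it first proves the adjoint identities $(\psi V(\tau_2,\tau_1))^{*}=V(\tau_1,\tau_2)\psi$ and the analogue for $U$, then constructs the forward parametrix for $U(s,t)\psi$ and $V(s,t)\psi$ --- propagation in the $s$ variable, cutoff on the data side, phase in $x$, amplitude compactly supported in $x$ --- and finally takes adjoints, which swaps the roles of $(t,x)$ and $(s,y)$ and yields precisely the stated kernels with phase and compact support in $y$. You need to add this adjoint argument (or an equivalent, and more laborious, rewriting of the forward FIO by a change of phase function with uniform control of the resulting amplitudes). The remainder of your outline --- the first-order reduction via $\Lambda$, the uniform choice of $\delta$ and of all derivative bounds via periodicity and compactness of $[0,T]\times\overline{B(0,R_1)}\times S^{n-1}$, and the admissible $(t,s)$-dependence of $R_N$ via Duhamel and the boundedness of the exact propagator --- is sound and parallels the paper's argument.
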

For proving Theorem 6 we start by looking for the properties of the adjoint operators $(\psi U(t,s))^*$ and $(\psi V(t,s))^*$.
\begin{lem}
 Let $\psi\in{\CI}$ and $\tau_1$, $\tau_2\in[0,T]$, with $\tau_1<\tau_2$. Then
\[(\psi V(\tau_2,\tau_1))^*=V(\tau_1,\tau_2)\psi,\quad (\psi\partial_tU(\tau_2,\tau_1))^*=\partial_tU(\tau_1,\tau_2)\psi.\]
\end{lem}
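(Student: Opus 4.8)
The statement is a pair of duality identities for the operators $U(t,s)$, $V(t,s)$ built from the propagator $\mathcal{U}(t,s)$, so the natural strategy is to express all of these operators through an energy (or $L^2$-type) pairing and to exploit the fact that $\mathcal{U}(t,s)$ propagates a \emph{self-adjoint} (in the energy metric) second-order hyperbolic problem, together with the group-type identities $\mathcal{U}(t,s)\mathcal{U}(s,t)=\mathrm{Id}$ coming from Proposition 1. First I would fix the correct Hilbert space structure: on $\mathcal{H}_1=\dot H^1\times L^2$ the energy form associated to the frozen-time operator $-\Div_x(a(\sigma,x)\nabla_x\cdot)$ is, for each $\sigma$, equivalent to the standard one by (1.2)(i); the map $\mathcal{U}(t,s)$ satisfies, for solutions $(u,u_t)$ and $(v,v_t)$ of (1.1), the Green-type identity obtained by multiplying the equation for $u$ by $\partial_t v$, the equation for $v$ by $\partial_t u$, subtracting and integrating in $x$ and $t$ over $[s,\tau_2]$. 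The boundary terms at the two endpoints give precisely a pairing of $\mathcal{U}(t,s)$ against the backward propagator $\mathcal{U}(s,t)$, i.e.\ $\langle \mathcal{U}(\tau_2,\tau_1)f,g\rangle = \langle f,\mathcal{U}(\tau_1,\tau_2)g\rangle$ in the appropriate (time-dependent-at-the-endpoints) energy pairing. This is the computational heart of the argument.

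Next I would read off the two scalar statements from this abstract identity by choosing the right components. For the second identity, note that $\partial_t U(\tau_2,\tau_1)f = \big(\partial_t \mathcal{U}(\tau_2,\tau_1)(f,0)\big)_1$, and $\partial_t$ of the first component of $\mathcal{U}$ equals the second component; so $\partial_t U(\tau_2,\tau_1)f = \big(\mathcal{U}(\tau_2,\tau_1)(f,0)\big)_2$, which is the $L^2$-to-$L^2$ "velocity-out, displacement-in" block. Dually $V(\tau_1,\tau_2)g = \big(\mathcal{U}(\tau_1,\tau_2)(0,g)\big)_1$ is the "displacement-out, velocity-in" block of the backward propagator. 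Pairing in $L^2(\R^n)$, the Green identity shows these two blocks are formal transposes of each other; the cutoff $\psi\in\CI$ is real and acts by multiplication, hence self-adjoint in $L^2$, so it simply moves across the adjoint, turning $\psi\partial_t U(\tau_2,\tau_1)$ into $(\partial_t U(\tau_1,\tau_2))\psi$ after taking $*$. The first identity, $(\psi V(\tau_2,\tau_1))^* = V(\tau_1,\tau_2)\psi$, follows in exactly the same way from the "velocity-in, displacement-out" block being its own transpose under time reversal $\tau_2\leftrightarrow\tau_1$, again using that $\psi=\psi^*$.

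The main obstacle is bookkeeping rather than conceptual: one must be careful about which Sobolev/energy space each pairing lives in (the mixed $\dot H^1\times L^2$ structure means the "adjoint" is with respect to the $L^2(\R^n)$ pairing on the relevant single component, not the full energy pairing), and one must justify the integration by parts — here the finite speed of propagation for (1.1) and the fact that $\psi$ has compact support guarantee all spatial boundary terms at infinity vanish and that no regularity is lost, so the formal manipulations are legitimate for, say, Schwartz data and then extend by density. I would therefore (i) record the Green identity for two solutions of (1.1) on the slab $[\tau_1,\tau_2]\times\R^n$, (ii) specialise the initial data to $(f,0)$ and $(0,g)$ to isolate the $U$-, $\partial_t U$- and $V$-blocks, (iii) use $\mathcal{U}(\tau_1,\tau_2)=\mathcal{U}(\tau_2,\tau_1)^{-1}$ from Proposition 1 to interpret the endpoint-$\tau_1$ terms as the backward propagator, and (iv) insert $\psi=\psi^*$ to finish. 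Each step is a short computation; no step should require more than the energy identity and Proposition 1.
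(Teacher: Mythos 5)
Your overall strategy is the one the paper uses: fix $f,g\in\CI$, launch one solution forward from $\tau_1$ and one backward from $\tau_2$, integrate by parts over the slab $[\tau_1,\tau_2]\times\R^n$ so that only the endpoint terms survive, read off the $L^2$ duality between the relevant component blocks, and finish by density together with the fact that multiplication by the real cut-off $\psi$ is $L^2$-self-adjoint. Your identification of which blocks of $\mathcal{U}$ the operators $V$ and $\partial_t U$ represent, and your remark that the adjoint is taken in the $L^2(\R^n)$ pairing of a single component rather than in the energy pairing, are both correct and match the paper.

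There is, however, one concrete flaw in the computational heart of your plan: the choice of multiplier. You propose to multiply the equation for $u$ by $\partial_t v$ and the equation for $v$ by $\partial_t u$. That combination produces the energy-type form $\int_{\R^n}\bigl(u_t v_t + a\,\nabla u\cdot\nabla v\bigr)\,\mathrm{d}x$, which is \emph{not} conserved here because $a$ depends on $t$: a bulk term $\int\!\!\int a_t\,\nabla u\cdot\nabla v\,\mathrm{d}x\,\mathrm{d}t$ survives and does not cancel. Moreover, its endpoint contributions pair velocity against velocity and gradient against gradient, which is not the pairing $\langle V(\tau_2,\tau_1)f,g\rangle_{L^2}$ (displacement output against velocity input) that the lemma requires. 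The identity you actually need, and the one the paper uses, is the Lagrange (symplectic) identity obtained by multiplying the equation for $u$ by $w$ itself and the equation for $w$ by $u$ and subtracting: the spatial terms $\Div_x(a\nabla_x u)\,w - u\,\Div_x(a\nabla_x w)$ integrate to zero over $\R^n$ for any time dependence of $a$, and one is left with $\int_{\R^n}\bigl[\partial_t u\,w - u\,\partial_t w\bigr]_{\tau_1}^{\tau_2}\mathrm{d}x=0$. With data $(0,f)$ at $\tau_1$ and $(0,g)$ at $\tau_2$ (resp.\ $(f,0)$ and $(g,0)$ for the $\partial_t U$ identity) this yields exactly the claimed transposition. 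Once you replace the multiplier, the rest of your plan (density from $\CI$, insertion of $\psi=\psi^*$) goes through as you describe.
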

\begin{proof}
Let $f,g\in{\CI}$. Consider $u(t)=V(t,\tau_1)f$ and $w(t)=V(t,\tau_2)g$. Integrating by parts, we find
\[0=\int_{\tau_1}^{\tau_2}\int_{{\R}^n}(\partial_t^2u-\Div_x(a\nabla_x u)(t,x)w(t,x)\textrm{d}x\textrm{d}t=\int_{{\R}^n}[\partial_t(u)w]_{\tau_1}^{\tau_2}\textrm{d}x-\int_{{\R}^n}[\partial_t(w)u]_{\tau_1}^{\tau_2}\textrm{d}x.\]
It follows
\[0=\langle g,w(\tau_2)\rangle_{L^2}-\langle u(\tau_2),f\rangle_{L^2}=\langle g,V(\tau_2,\tau_1)f\rangle_{L^2}-\langle V(\tau_2,\tau_1)g,f\rangle_{L^2}.\]
Thus  for all $f,g\in{\CI}$ we have
\[\langle V(\tau_2,\tau_1)f,g\rangle_{L^2}=\langle f,V(\tau_1,\tau_2) g\rangle_{L^2}.\]
 By density we find that for all $f,g\in L^2$
\[\langle\psi V(\tau_2,\tau_1)f,g\rangle_{L^2}=\langle f,V(\tau_1,\tau_2)\psi g\rangle_{L^2}.\]
Following the same argument, we get
\[(\psi\partial_t U(\tau_2,\tau_1))^*=\partial_t(U)(\tau_1,\tau_2)\psi.\]
\end{proof}

We associate to the equation $\partial^2_tu-\Div_x(a\nabla_x u)=0$ the following Hamiltonian 
\[H(t,x,\tau,\xi)=\tau^2-a(t,x)\vert\xi\vert^2.\]
Thus the bicharacteristics of $\partial_t^2-\Div_x(a\nabla_x u)$ are the solutions of the system
\begin{equation}\label{eq:1}
\left\{\begin{array}{l} \displaystyle\frac{\partial x}{\partial \sigma}=-2a(t,x)\xi,\ \ \frac{\partial t}{\partial \sigma}=2\tau,\\
\ \\
       \displaystyle \frac{\partial\xi}{\partial \sigma}=\vert\xi\vert\nabla_xa(t,x),\ \frac{\partial\tau}{\partial \sigma}=a_t(t,x)\vert \xi\vert^2,\\
       \ \\
(t(0),x(0),\xi(0),\tau(0))=(t_0,x_0,\xi_0,\tau_0), \textrm{ with } \tau_0^2-a(t_0,x_0)\vert\xi_0\vert^2=0.\\
       \end{array}\right.
\end{equation}
\begin{prop}
 Let $(t_0,x_0,\tau_0,\xi_0)\in{\R}^{2(n+1)}$ be such that $\tau_0^2-a(t_0,x_0)\vert\xi_0\vert^2=0$ and $\xi_0\neq0$. Then the maximal solution of  (7.1) is defined for $\sigma\in I$, with $I$ an interval of ${\R}$, and $\sigma\mapsto t(\sigma)$ is a $C^\infty$ diffeomorphism from $I$ to ${\R}$.
\end{prop}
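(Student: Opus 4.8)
The plan is to analyze the Hamiltonian system (7.1) directly, using the non-trapping hypothesis (H1) together with the structure of the equations. First I would establish local existence and uniqueness of the maximal solution on an open interval $I\ni 0$ by the standard Cauchy--Lipschitz theorem, since the right-hand side of (7.1) is $C^\infty$ in $(t,x,\tau,\xi)$. The key preliminary observation is that $H(t,x,\tau,\xi)=\tau^2-a(t,x)|\xi|^2$ is constant along the flow (it is the Hamiltonian), so the null condition $\tau_0^2-a(t_0,x_0)|\xi_0|^2=0$ persists: $\tau(\sigma)^2=a(t(\sigma),x(\sigma))|\xi(\sigma)|^2$ for all $\sigma\in I$. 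Next I would show $\xi(\sigma)\neq0$ throughout $I$: writing $\frac{d}{d\sigma}|\xi|^2 = 2\xi\cdot\frac{\partial\xi}{\partial\sigma} = 2|\xi|\,\xi\cdot\nabla_x a$, one gets $\big|\frac{d}{d\sigma}|\xi|\big|\le |\nabla_x a(t,x)|\,|\xi| \le C|\xi|$ using the uniform bound on $\nabla_x a$ (which holds because $a\equiv1$ outside a compact $x$-set and $a\in C^\infty$); Gr\"onwall then gives $|\xi(\sigma)|\ge |\xi_0|e^{-C|\sigma|}>0$, so $\xi$ never vanishes. Combined with the null relation, this forces $\tau(\sigma)\neq0$ as well, so $\tau(\sigma)$ has a constant sign, equal to $\mathrm{sign}(\tau_0)$, on $I$.

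With $\tau$ of constant sign and bounded away from $0$ on compact subintervals, the equation $\frac{\partial t}{\partial\sigma}=2\tau$ shows $\sigma\mapsto t(\sigma)$ is strictly monotone, hence a $C^\infty$ diffeomorphism onto its image $J=t(I)\subset\R$, which is an open interval. The remaining point — the heart of the proposition — is to prove $J=\R$, i.e. that $t(\sigma)$ is not bounded on approach to either endpoint of $I$. Suppose, say, $\sigma^+=\sup I<+\infty$; I would argue the solution cannot blow up before reaching it only if $t$ stays bounded, and conversely if $J$ is bounded above, say $t(\sigma)\to t^*<\infty$ as $\sigma\to\sigma^+$. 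The strategy here is to use non-trapping (H1): since $|\tau|$ is bounded on the bounded $t$-range (as $a$ is bounded and we need an a priori bound on $|\xi|$), and since $\frac{\partial t}{\partial\sigma}=2\tau$ stays bounded away from $0$... Actually the cleaner route: one shows that on any finite $t$-interval the full bicharacteristic $(x,\xi,\tau)$ cannot escape to infinity in finite ``time'' because $|\dot x|=2a|\xi|$ and $|\dot\xi|\le C|\xi|$ give, via Gr\"onwall applied to $|\xi|$ and then to $|x|$, global bounds in terms of the $t$-elapsed; this rules out blow-up of $(x,\xi)$ while $t$ remains bounded. Hence if $\sigma^+<\infty$ with $t$ bounded, the solution extends past $\sigma^+$, contradicting maximality; so either $\sigma^+=+\infty$ or $t(\sigma)\to+\infty$. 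In the first case, because $\dot t=2\tau$ and one must rule out $\tau\to0$: here (H1) enters, since trapped-type behaviour where $|x|$ stays bounded forever is exactly what non-trapping forbids — I would show that if $t$ stayed bounded as $\sigma\to+\infty$ then $|x(\sigma)|$ would stay bounded for all large $\sigma$, contradicting (H1), which asserts $|x(\sigma)|>R$ for $|\sigma|\ge S_R$. Thus $t(\sigma)\to+\infty$, and symmetrically $t(\sigma)\to-\infty$ at the left end, giving $J=\R$.

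The main obstacle I anticipate is the clean handling of the ``escape in finite $\sigma$'' versus ``escape to infinite $t$'' dichotomy — that is, quantitatively ruling out that $(x,\xi,\tau)$ blows up while $t$ stays confined, and separately invoking (H1) in the correct form (noting (H1) is stated for bicharacteristics with $|x(0)|\le\rho$, so one may need to use the finite-speed/straight-line propagation of the flow outside $|x|\ge\rho$ to reduce to that case, or to observe the flow is free there, $a\equiv1$, so $x$ moves on a line and $t$ is an affine function of $\sigma$, making the claim transparent once the characteristic exits the ball). Once the characteristic leaves $\{|x|\le\rho\}$ it is a straight null line of the flat wave operator, along which $t$ is a nondegenerate affine function of $\sigma$ (slope $2\tau_0\neq0$), so $t$ ranges over all sufficiently large (or small) reals; the only real work is the interior estimate on the compact region $\{|x|\le\rho\}$ combined with (H1) to guarantee exit, which is precisely where the non-trapping hypothesis is used and which I expect to require the most care.
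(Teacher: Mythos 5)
Your argument is essentially sound but follows a genuinely different route from the paper's, the main divergence being your appeal to the non-trapping hypothesis (H1). The paper proves the proposition by pure ODE estimates with no reference to (H1): after showing $\xi(\sigma)\neq0$ (via uniqueness against the stationary solution with data $(t(\sigma_0),x(\sigma_0),0,0)$, where you instead use a Gr\"onwall lower bound --- both work), it reparametrizes the bicharacteristic by $t$, reduces (7.1) to the system (7.3) in $(x,\xi)$, and proves that the $t$-parametrized maximal solution is global in $t$ by combining the exponential upper bound (7.4) on $\Vert(x(t),\xi(t))\Vert$ with the explicit exponential lower bound (7.7) on $\vert\xi(t)\vert$ (needed because the reduced vector field is singular at $\xi=0$); surjectivity of $\sigma\mapsto t(\sigma)$ is then recovered through the change of variables $s_1(t)=\int_{t_0}^t \mathrm{d}t'/(2\tau_1(t'))$ and uniqueness. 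Your route instead runs the dichotomy $\sigma^+<\infty$ versus $\sigma^+=+\infty$ and, in the second case, invokes (H1) together with the fact that the flow is free (so $t$ is affine in $\sigma$ with nonzero constant slope) outside $\{\vert x\vert\le\rho\}$. This can be made to work, but it carries two costs: (i) you must actually perform the reduction you only mention, since (H1) is stated for bicharacteristics with $\vert x(0)\vert\le\rho$; and (ii), more seriously, Proposition 8 is invoked again in Section 8.1 precisely in order to \emph{verify} (H1) for the explicit metrics satisfying (8.1), so a proof of Proposition 8 that presupposes (H1) would render that later verification circular. The (H1)-free proof is therefore preferable, and your own Gr\"onwall-in-$t$ bounds already suffice for it: supplement them with the lower bound on $\vert\xi\vert$ along the $t$-parametrization and conclude global-in-$t$ existence of (7.3), then transfer back to $\sigma$ as the paper does.
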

\begin{proof}
Let $(t(\sigma),x(\sigma),\xi(\sigma),\tau(\sigma))$ be the maximal solution of (7.1) defined in $I$. 
Now, suppose  there exists $\sigma_0\in I$ such that $\xi(\sigma_0)=0$. Since $(t_1(\sigma),x_1(\sigma),\tau_1(\sigma),\xi_1(\sigma))$ defined by
\[(t_1(\sigma),x_1(\sigma),\tau_1(\sigma),\xi_1(\sigma))=(t(\sigma_0),x(\sigma_0),0,0),\quad  \sigma\in{\R}\]
is the solution on ${\R}$ of (7.1) with initial data $(t(\sigma_0),x(\sigma_0),0,0)$, the uniqueness of the solution of (7.1) implies that \[(t(\sigma),x(\sigma),\tau(\sigma),\xi(\sigma))=(t_1(\sigma),x_1(\sigma),\tau_1(\sigma),\xi_1(\sigma)),\quad  \sigma\in{\R}.\]
This leads to a contradiction since $\xi(0)=\xi_0\neq0$. It follows that $\xi(\sigma)\neq0$, $  \sigma\in I$
and 
\[\sqrt{a(t(\sigma),x(\sigma))}\vert\xi(\sigma)\vert>0.\]
We deduce that $s\mapsto t(\sigma)$ is strictly monotonous in $I$ and
 $s\mapsto t(\sigma)$ is a $C^\infty$ diffeomorphism from $I$ to  $ran(t)$. Thus, for $\xi_0\neq0$ we can parametrize $(x(\sigma),\tau(\sigma),\xi(\sigma))$ with respect to $t$ and the problem (7.1) becomes
 \begin{equation}\label{eq:1}
\left\{\begin{array}{l} \displaystyle\frac{\partial x}{\partial t}=-\frac{a(t,x)\xi}{\tau(t)},\\
\ \\
       \displaystyle \frac{\partial\tau}{\partial t}=\frac{a_t(t,x)\vert \xi\vert^2}{2\tau(t)},\ \ \frac{\partial\xi}{\partial t}=\frac{\vert\xi\vert^2}{2\tau(t)}\nabla_xa(t,x),  \\
       \ \\
(x(t_0),\tau(t_0),\xi(t_0))=(x_0,\tau_0,\xi_0), \textrm{ with } \tau_0^2-a(t_0,x_0)\vert\xi_0\vert^2=0.\\
       \end{array}\right.\end{equation}
Let $\tau_0=\sqrt{a(t_0,x_0)}\vert \xi_0\vert$. 
We deduce that $\tau(t)=\sqrt{a(t,x(t))}\vert \xi(t)\vert$ and (7.2) becomes
\begin{equation}\label{eq=1}
\left\{\begin{array}{l}\displaystyle \frac{\partial x}{\partial t}=- \sqrt{a(t,x(t))}\frac{\xi(t)}{\vert\xi(t)\vert},\\
\ \\
\displaystyle\frac{\partial \xi}{\partial t}=\frac{\vert\xi(t)\vert}{2\sqrt{a(t,x(t))}}\nabla_xa(t,x(t)),\\
\ \\
(x(t_0),\xi(t_0))=(x_0,\xi_0),\quad\xi_0\neq0.\\ \end{array}\right.\end{equation}
 Let $(x(t),\xi(t))$ be the maximal solution of (7.3) defined on $J$. 
 For all $t\in J$ we have
 \[\frac{\partial\vert x(t)\vert^2}{\partial t}= 2\sqrt{a(t,x(t))}\frac{\xi(t).x(t)}{\vert\xi(t)\vert}\]
 and
 \[\frac{\partial \vert\xi(t)\vert^2}{\partial t}=\frac{\vert\xi(t)\vert}{\sqrt{a(t,x(t))}}\nabla_xa(t,x(t)).\xi(t).\]
 Let 
 \[C=\sup_{(t,x)\in{\R}^{1+n}}2\sqrt{a(t,x)}+\frac{\vert \nabla_xa(t,x)\vert}{\sqrt{a(t,x)}}<+\infty.\]
 We obtain
 \[\frac{\partial}{\partial t}\Vert (x(t),\xi(t))\Vert^2\leq C(1+\Vert (x(t),\xi(t))\Vert^2),\]
and it follows
 \begin{equation}\label{eq=1}
 \Vert (x(t),\xi(t))\Vert^2\leq \left(1+\Vert (x_0,\xi_0)\Vert^2\right)e^{C \vert t-t_0\vert},\quad  t\in J.\end{equation}
Also (7.2) implies
  \[\frac{\partial \vert\xi(t)\vert^2}{\partial t}=-\frac{\vert\xi(t)\vert^2}{a(t,x(t))}\nabla_xa(t,x(t)).\frac{\partial x(t)}{\partial t}.\]
 
It follows
 \begin{equation}\label{eq=1}
 \vert\xi(t)\vert^2=\vert\xi_0\vert^2\exp\left(-\int_{t_0}^t\frac{\nabla_xa(t',x(t')).\frac{\partial x(t')}{\partial t}}{a(t',x(t'))}\textrm{d}t'\right).\end{equation}
Since we have
 \[\int_{t_0}^t\frac{\nabla_xa(t',x(t')).\frac{\partial x(t')}{\partial t}}{a(t',x(t'))}\textrm{d}t'=\ln\left(\frac{a(t,x(t))}{a(t_0,x(t_0))}\right)-\int_{t_0}^t\frac{a_t(t',x(t'))}{a(t',x(t'))}\textrm{d}t',\]
 we obtain
  \begin{equation}\label{eq=1}\vert\xi(t)\vert^2=\vert\xi_0\vert^2\left(\frac{a(t_0,x_0)}{a(t,x(t))}\right) \exp\left(\int_{t_0}^t\frac{a_t(t',x^(t'))}{a(t',x(t'))}\textrm{d}t'\right).\end{equation}
 Let 
 \[D=\sup_{(t,x)\in{\R}^{1+n}}\frac{\vert a_t(t,x)\vert}{a(t,x)}<+\infty.\]
We deduce from (7.6) and condition (i) of (1.2) that
\begin{equation}\label{eq=1}\vert\xi(t)\vert^2\geq\vert\xi_0\vert^2\left(\frac{c_0}{C_0}\right) e^{-D\vert t-t_0\vert},\quad   t\in J.\end{equation}
Conditions (7.4) and (7.7) imply  that $J={\R}$ and it follows that the maximal  solution $(x(t),\tau(t),\xi(t))$ of (7.2) is defined on ${\R}$.
It follows the same for $\tau_0=-\sqrt{a(t_0,x_0)}\vert \xi_0\vert$. 

Now let  
 $(x_1(t),\tau_1(t),\xi_1(t))$ be the  solution on ${\R}$ of (7.2) and $(t(\sigma),x(\sigma),\tau(\sigma),\xi(\sigma))$ the solution of (7.1) on ${\R}$. Let $s_1(t)$ be a function defined by 
 \[s_1(t)=\int_{t_0}^t\frac{\textrm{d}t'}{2\tau_1(t')},\]
 and notice that $s_1(t)$ is a diffeomorphism from ${\R}$ to $ran(s_1)$. Consider also $(t_3(\sigma), x_3(\sigma),\tau_3(\sigma),\xi_3(\sigma))$ defined on $ran(s_1)$  by
 \[(t_3(\sigma), x_3(\sigma),\tau_3(\sigma),\xi_3(\sigma))=(s_1^{-1}(\sigma),x_1(s_1^{-1}(\sigma)),\tau_1(s_1^{-1}(\sigma)),\xi_1(s_1^{-1}(\sigma))).\]
 We can easily see that $(t_3(\sigma), x_3(\sigma),\tau_3(\sigma),\xi_3(\sigma))$ is the solution on $ran(s_1)$ of (7.1). Thus  the uniqueness of the solution of (7.1) implies that for all $s\in ran(s_1)$
 \[(t_3(\sigma), x_3(\sigma),\tau_3(\sigma),\xi_3(\sigma))=(t(\sigma), x(\sigma),\tau(\sigma),\xi(\sigma)).\]
 Then, it follows that for all $h\in{\R}$
 \[t(s_1(h))=s_1^{-1}(s_1(h))=h\]
 and the range of $s\mapsto t(\sigma)$ is ${\R}$. In conclusion, $s\mapsto t(\sigma)$ is a $C^\infty$ diffeomorphism from $I$ to ${\R}$ and this completes the proof.
 
 \end{proof}

Let $P=\partial_t^2-\Div_x(a(t,x)\nabla_x)$ and let $R_1>\rho>0$.
\begin{prop}
Let $(t_0,x_0,\eta_0)\in[0,T]\times B(0,R_1)\times S^{n-1}$, $R_1>0$ and let $S(y,\xi)$ be a $C^\infty$ function with support in a sufficiently small neighborhood of $(x_0,\eta_0)$. There exist $\delta_{t_0}>0$, $r_{x_0}>0$ and a neighborhood $\omega_{\eta_0}\subset S^{n-1}$ of $\eta_0$ such that for every integer $N\geq1$ we can construct Fourier integral operators
\[(I^\pm(t)f)(t,x)=\int\int e^{i\phi^\pm(t,x,\xi)-iy.\xi}b^{\pm}(t,x,y,\xi)f(y)\textrm{d}y\textrm{d}\xi\]
so that 
\begin{equation}\label{eq=1}
\left\{\begin{array}{lll}
P(I^+(t)f+I^-(t)f)&=&R_N(t)f,\\
I^+(t_0)f+I^-(t_0)f&=& S(x,D_x)f+V_Nf,\\
\partial_t I^+(t_0)f+\partial_tI^-(t_0)f&=&W_Nf.
\end{array}\right.
\end{equation}
Here $V_N, W_N\in B^{-N}$ and $R_N(t)\in B^{-N}$ for $\vert t-t_0\vert<\delta_{t_0}$ and the dependence on $t$ is in an admissible fashion. Moreover, $R_N(t)$, $V_N$ and $W_N$ are bounded  uniformly with respect to $t_0$, $x_0$, $\eta_0$. In a similar way we can construct Fourier integral operators
\[(J^\pm(t)f)(t,x)=\int\int e^{i\phi^\pm(t,x,\xi)-iy.\xi}c^{\pm}(t,x,y,\xi)f(y)\textrm{d}y\textrm{d}\xi\]
with the properties 
\[\left\{\begin{array}{lll}
P(J^+(t)f+J^-(t)f)&=&R'_N(t)f,\\
J^+(t_0)f+J^-(t_0)f&=& V'_Nf,\\
\partial_t J^+(t_0)f+\partial_tJ^-(t_0)f&=&S(x,D_x)f+W'_Nf.
\end{array}\right.\]
$R'_N(t)$, $V'_N$ and $W'_N$ being smoothing operators having the same properties as $R_N(t)$, $V_N$ and $W_N$.

\end{prop}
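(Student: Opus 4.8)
\noindent\textit{Sketch.} The plan is to carry out the classical geometric-optics (WKB) parametrix for the hyperbolic operator $P=\partial_t^2-\Div_x(a\nabla_x)$, arranging that every object produced depends uniformly on the base point $(t_0,x_0,\eta_0)$ in the compact set $[0,T]\times\overline{B(0,R_1)}\times S^{n-1}$. \emph{First}, I would construct the phases: inserting the stated ansatz into $P(e^{i\phi^\pm(t,x,\xi)}b^\pm)$ and cancelling the terms homogeneous of degree $2$ in $\xi$ forces the eikonal equations
\[
(\partial_t\phi^\pm)^2=a(t,x)\,\vert\nabla_x\phi^\pm\vert^2,\qquad \phi^\pm(t_0,x,\xi)=x\cdot\xi ,
\]
the two signs corresponding to the factors $\partial_t\phi=\pm\sqrt{a}\,\vert\nabla_x\phi\vert$ of the principal symbol $\tau^2-a(t,x)\vert\xi\vert^2$. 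I would solve this Hamilton--Jacobi problem by the method of characteristics, observing that its bicharacteristic system is exactly (7.1)--(7.3), for which Proposition~8 supplies globally defined flows together with the uniform two-sided bounds (7.4)--(7.7) on $\vert x(t)\vert$ and $\vert\xi(t)\vert$. Since the initial Lagrangian $\{(x,\xi):\xi\ \text{fixed}\}$ is a graph over $x$ at $t=t_0$, these bounds ensure that for $\xi/\vert\xi\vert$ in a small cone $\omega_{\eta_0}$ around $\eta_0$, for $x$ near $x_0$, and for $\vert t-t_0\vert<\delta_{t_0}$ small enough, the flow-out stays a graph (no caustic forms in short time), so $\phi^\pm$ is a genuine single-valued $C^\infty$ phase, homogeneous of degree $1$ in $\xi$; compactness of the parameter set and the uniformity in (7.4)--(7.7) then let one take $\delta=\inf_{t_0}\delta_{t_0}>0$ with $\phi^\pm$ and all its derivatives bounded uniformly in $(t_0,x_0,\eta_0)$.

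\emph{Next}, I would solve the transport equations and fix the Cauchy data of the amplitudes. Writing $b^\pm\sim\sum_{j\ge0}b_j^\pm$ with $b_j^\pm$ homogeneous of degree $-j$ in $\xi$ (cut off to vanish for small $\vert\xi\vert$), compactly supported in $y$ in the support of $S$ and conically supported in $\omega_{\eta_0}$, and substituting into $P(e^{i\phi^\pm}b^\pm)$: the degree-$2$ part vanishes by the eikonal equation, the degree-$1$ part gives a linear first-order transport ODE for $b_0^\pm$ along the bicharacteristics (solvable because $\partial_t\phi^\pm\neq0$), and the degree-$(1-j)$ part gives the analogous ODE for $b_j^\pm$ with a source built from $b_0^\pm,\dots,b_{j-1}^\pm$. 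Integrating along the characteristics leaves the data $b_j^\pm(t_0,\cdot)$ free; since $\phi^\pm(t_0,x,\xi)=x\cdot\xi$,
\[
\bigl(I^+(t_0)+I^-(t_0)\bigr)f=\int\!\!\int e^{i(x-y)\cdot\xi}\,(b^++b^-)(t_0,x,y,\xi)\,f(y)\,dy\,d\xi ,
\]
while the leading part of $\partial_t I^\pm(t_0)$ is $i\,(\partial_t\phi^\pm)(t_0)\,b^\pm(t_0)$ plus lower-order terms. Because $(\partial_t\phi^+)(t_0,x,\xi)=-(\partial_t\phi^-)(t_0,x,\xi)=\sqrt{a(t_0,x)}\,\vert\xi\vert\neq0$, matching symbols order by order amounts at each order to an invertible $2\times2$ system for $\bigl(b_j^+(t_0),b_j^-(t_0)\bigr)$; taking it so that at top order $b_0^++b_0^-$ is the principal symbol of $S$ and $(\partial_t\phi^+)b_0^++(\partial_t\phi^-)b_0^-=0$, and recursively at lower orders, makes $(b^++b^-)(t_0,\cdot)=S$ and $\partial_t(I^++I^-)(t_0)=0$ to infinite order. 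Borel's lemma then produces honest symbols $b^\pm\sim\sum_j b_j^\pm$.

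\emph{Finally}, I would control the remainders and the uniformity. After truncating the amplitude expansions far enough, $P(I^++I^-)=R_N(t)$ is a Fourier integral operator whose amplitude has arbitrarily negative order; since on $\vert t-t_0\vert<\delta$ the phases are non-degenerate with canonical relations that are local canonical graphs, $R_N(t)\in B^{-N}$, and likewise the $t_0$-errors $V_N,W_N\in B^{-N}$. Differentiating the eikonal and transport equations in $t$ and using the uniform flow bounds of Proposition~8 gives $\partial_t^j R_N(t)\in B^{-N+j}$ with constants independent of the base point, so the $t$-dependence is admissible and all constants are uniform in $(t_0,x_0,\eta_0)$. The operators $J^\pm(t)$ are constructed identically, the top-order matching being replaced by $c_0^++c_0^-=0$ together with $(\partial_t\phi^+)c_0^++(\partial_t\phi^-)c_0^-$ equal to the principal symbol of $S$, which is again an invertible $2\times2$ system. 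The one genuinely delicate point is the first step: producing a \emph{single}, single-valued $C^\infty$ phase on a fixed neighborhood of $(t_0,x_0,\eta_0)$ valid up to a time $\delta>0$ chosen independently of the base point, i.e.\ excluding caustics uniformly; this is exactly what the global solvability and the uniform estimates (7.4)--(7.7) of Proposition~8, combined with the compactness of $[0,T]\times\overline{B(0,R_1)}\times S^{n-1}$, deliver, after which the remaining transport-equation bookkeeping is routine.
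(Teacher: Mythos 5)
Your proposal is correct and follows essentially the same route as the paper: factor the principal symbol, solve the eikonal equations by Hamilton--Jacobi/characteristics with uniformity coming from the bicharacteristic bounds (7.4)--(7.7) of Proposition~8 and compactness of $[0,T]\times \overline{B(0,R_1)}\times S^{n-1}$, then determine the homogeneous amplitudes by transport equations whose Cauchy data at $t_0$ solve an invertible $2\times 2$ system since $\partial_t\phi^+(t_0)\neq\partial_t\phi^-(t_0)$, with the remainder of order $-N$. The only cosmetic difference is that you invoke Borel summation where the paper simply truncates the expansion at order $N$ for each fixed $N$.
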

\begin{proof}
We will solve problem (7.8) using BKW method. First, $P$ is a strictly hyperbolic operator with principal symbol
\[\sigma(P)=\tau^2-a(t,x)\vert\xi\vert^2=(\tau-\sqrt{a(t,x)}\vert\xi\vert)(\tau+\sqrt{a(t,x)}\vert\xi\vert).\]
Consider the eikonal equations
\begin{equation}\label{eq=1}
\left\{\begin{array}{l}\displaystyle\frac{\partial \phi^\pm}{\partial t}\pm \sqrt{a(t,x)}\vert\nabla_x\phi^\pm\vert=0,\\
\ \\
\displaystyle\phi^\pm(t_0,x,\eta)=x.\eta.\\ \end{array}\right.\end{equation}
We can solve  (7.9) for $(t,x,\eta)\in[t_0-\delta_{t_0},t_0+\delta_{t_0}]\times B(x_0,r_{x_0})\times \omega_{\eta_0}$ by the classical Hamilton-Jacobi method (see [9], chapter V). We  extend $\phi^\pm$ to a smooth solution  which  is a positively homogeneous of degree 1 in $\eta$, by putting $\phi(t,x,\eta)=\vert\eta\vert\phi(t,x,\frac{\eta}{\vert\eta\vert})$. Let $\eta'\in S^{n-1}$. Consider
\[\Lambda^\pm_1(t_0,\eta')=\{(t_0,x,\mp\sqrt{a(t_0,x)},\eta')\ :\ \vert x\vert\leq R\}.\]
$\Lambda^\pm_1(t_0,\eta')$ is a $C^\infty$-submanifold of dimension $n$.
We introduce the Lagrangians 
\[\begin{array}{lll}\Lambda^\pm(t_0,\eta')=\{(t,x(t),\tau(t),\xi(t))&:&\ t\in[0,T+1],\ (t(\sigma),x(t(\sigma)),\tau(t(\sigma)),\xi(t(\sigma))) \textrm{ be the}  \\ \ &\ &\textrm{bicharacteristic with initial data  }\\ \ &\ &(t(0),x(t(0),\tau(t(0)),\xi(t(0)))\in\Lambda^\pm_1(t_0,\eta')\}.\\ \end{array}\]
 Let $(t^\pm(\sigma),x^\pm(\sigma),\tau^\pm(\sigma),\xi^\pm(\sigma))$ be the bicharacteristic with initial data
\[(t^\pm(0),x^\pm(0),\tau^\pm(0),\xi^\pm(0))=(t_0,x_0,\mp\sqrt{a(t_0,x_0)},\eta'),\]
with  $\vert x_0\vert\leq R_1$.
By the theory for the Hamilton-Jacobi equation we deduce
\[\left(\frac{\partial\phi^\pm}{\partial t},\nabla_x\phi^\pm\right)(t^\pm(\sigma),x^\pm(\sigma),\eta')=(\tau^\pm(\sigma),\xi^\pm(\sigma)).\]
Then $\Lambda^\pm(t_0,\eta')$ is the graph of  $\phi^\pm$ for $(t,x,\eta')\in[t_0-\delta_{t_0},t_0+\delta_{t_0}]\times B(x_0,r_{x_0})\times\omega_{\eta_0}$.  Now consider $\left(x^\pm(t,t',x,\eta'),\xi^\pm(t,t',x,\eta'),\tau^\pm(t,t',x,\eta')\right)$  the solution on ${\R}$ of the problem (7.2) with initial data\\
 $x^\pm(t',t',x,\eta')=x$,
 $\xi^\pm(t',t',x,\eta')=\eta'$ and $\tau^\pm(t',t',x,\eta')=\mp\sqrt{a(t',x)}\vert\eta'\vert$. We can see that $x^\pm(t,t',x,\eta')$ and $\xi^\pm(t,t',x,\eta')$ are continuous with respect to $t,t',x,\eta'$ on the compact set\\ 
 $[0,T]\times[0,T]\times B_F(0,R)\times S^{n-1}$ and there exists $C>0$ such that
\[\vert x(t,t',x,\eta')\vert\leq C,\quad \vert\xi(t,t',x,\eta')\vert\leq C,\quad  (t,t',x,\eta')\in[0,T]\times[0,T]\times B_F(0,R)\times S^{n-1}.\]
It follows that for all $t_0\in[0,T]$ and $\eta'\in S^{n-1}$ the Lagrangian $\Lambda^\pm(t_0,\eta')$ is included in a set bounded uniformly with respect to $(t,x,\eta)\in[t_0-\delta_{t_0},t_0+\delta_{t_0}]\times B(x_0,r_{x_0})\times \omega_{\eta_0}$ independently of $t_0$, $x_0$, $\eta_0$.
Consequently,  $\phi^\pm$ and  their derivatives are uniformly bounded with respect to\\
 $(t,x,\eta)\in[t_0-\delta_{t_0},t_0+\delta_{t_0}]\times B(x_0,r_{x_0})\times \omega_{\eta_0}$ independently of $t_0$, $x_0$, $\eta_0$.

By using a cut-off, we may assume that all symbols vanish for sufficiently small $\vert \eta\vert$. Now consider the asymptotic expansion of $b^\pm$
\[b^\pm \sim\sum_{k=0}^Nb_k^\pm\]
with $b_k^\pm$ homogeneous of degree $-k$ in $\eta$. To solve (7.8) (see [9] chapter VI), $b_0^\pm$ must be  solutions of the transport equation
\begin{equation}\label{eq=1}\left\{\begin{array}{l}
L^\pm(b_0^\pm)=0,\\
b^+_0(t_0,x,y,\eta)+b^-_0(t_0,x,y,\eta)=S(y,\eta)\\
\left(\partial_t\phi^+ b^+_0+\partial_t\phi^- b^-_0\right)(t_0,x,\eta)=0\\
\end{array}\right.\end{equation}
and $b^\pm_k$ for $k\in\{1,\cdots,N\}$ must be solutions of the transport equation
\begin{equation}\label{eq=1}\left\{\begin{array}{l}
L^\pm(b_k^\pm)=-P(b_{k-1}^\pm),\\
b^+_k(t_0,x,\eta)+b^-_k(t_0,x,\eta)=0\\
\left(\partial_t\phi^+ b^+_k+\partial_t\phi^- b^-_k\right)(t_0,x,\eta)=-\left(\partial_t b_{k-1}^++\partial_t b_{k-1}^-\right)(t_0,x,\eta)\\
\end{array}\right.\end{equation}
with
\[L^\pm(v)=\partial_t\phi^\pm\partial_tv-2a\nabla_x\phi^\pm.\nabla_xv+P(\phi^\pm)v.\]
Since 
\[\frac{\partial\phi^+}{\partial t}(t_0,x,\eta)=-\sqrt{a(t,x)}\vert \nabla_x \phi(t_0,x,\eta)\vert=-\sqrt{a(t,x)}\vert \eta\vert\]
and
\[\frac{\partial\phi^-}{\partial t}(t_0,x,\eta)=\sqrt{a(t,x)}\vert \eta\vert\]
we have
\[\frac{\partial\phi^-}{\partial t}(t_0,x,\eta)\neq\frac{\partial\phi^+}{\partial t}(t_0,x,\eta).\]
It follows that we can find $b_0^\pm$ and $b^\pm_k$ for $k\in\{1,\cdots,N\}$ as solutions of (7.10) and (7.11) on $[t_0-\delta_{t_0},t_0+\delta_{t_0}]\times B(x_0,r_{x_0})\times \omega_{\eta_0}$. Moreover, in a sufficiently small neighborhood of $(x_0,\eta_0)$  ,   $b^\pm_k$ will satisfy $\textrm{supp}_x(b^\pm_k(t,x,y,\eta))\subset B(x_0,r_{x_0})$ and $\textrm{supp}_\eta(b_k^\pm(t,x,y,\eta))\subset \omega_{\eta_0}$, and $b_0^\pm$, $b^\pm_k$ for $k\in\{1,\ldots,N\}$  solutions of (7.10) and (7.11) on $[t_0-\delta_{t_0},t_0+\delta_{t_0}]\times{\R}^n_x\times{\R}^n_\eta$ homogeneous of degree $-k$ in $\eta$.
Let $(t^\pm(\sigma),x^\pm(\sigma),\tau^\pm(\sigma),\xi^\pm(\sigma))$ be the bicharacteristic with initial data
\[(t^\pm(0),x^\pm(0),\tau^\pm(0),\xi^\pm(0))=(t_0,x,\mp\sqrt{a(t_0,x)},\frac{\eta}{\vert\eta\vert}).\]
Then $b_0^\pm(t(\sigma),x(\sigma),\frac{\eta}{\vert\eta\vert})$ is the solution of
\[\left\{\begin{array}{l}\partial_\sigma(b^\pm_0(t(\sigma),x(\sigma),\frac{\eta}{\vert\eta\vert}))+P(\phi)b_0^\pm(t(\sigma),x(\sigma),\frac{\eta}{\vert\eta\vert})=0,\\
b_0^\pm(t(\sigma),x(\sigma),\frac{\eta}{\vert\eta\vert})_{\vert \sigma=0}=b^\pm_0(t_0,x,\frac{\eta}{\vert\eta\vert}).
\end{array}\right.\]
Thus
\[b_0^\pm(t(\sigma),x(\sigma),\eta)=b_0^\pm(t_0,x,\eta)e^{-\int_0^\sigma P(\phi^\pm)(t(s'),x(s'),\frac{\eta}{\vert\eta\vert})\textrm{d}s'}\]
and since $\phi^\pm$ and all their derivatives are uniformly bounded independently  of $t_0$, $x_0$ and $\eta_0$ it follows the same for $b_0^\pm$. In the same way we prove that $b^\pm_1,\ldots,b_N^\pm$ are uniformly bounded on $[t_0-\delta_{t_0},\delta_{t_0}+t_0]\times B(x_0,r_{x_0})\times{\R}^n$ independently of $t_0$, $x_0$ and $\eta_0$. Finally, we find that
\[\begin{array}{l}(\partial_t^2-\Div_x(a(t,x)\nabla_x))I(t,x,y,t_0,x_0)f=\\
=\int_{{\R}^n}\int_{{\R}^n}(P(b_N^+)e^{i\phi^+}+P(b_N^-)e^{i\phi^-})(t,x,\eta)e^{-iy.\eta}f(y)\textrm{d}\eta \textrm{d}y  \end{array}\]
and
\[I(t_0,t_0,x,y)=\int_{{\R}^n}(S(y,\eta)+V_N(x,y,\eta))e^{i(x-y).\eta}\textrm{d}\eta,\]
\[\partial_tI(t_0,t_0,x,y)=\int_{{\R}^n}W_N(x,y,\eta)e^{i(x-y).\eta}\textrm{d}\eta\]
with $V_N(x,y,\eta),W_N(x,y,\eta)\in S^{-N}_{1,0}$.
The proof is complete since\\
 $(P(b_N^+)e^{i\phi^+}+P(b_N^-))\in S^{-N}_{1,0}$  is uniformly bounded on $[t_0-\delta_{t_0},t_0+\delta_{t_0}]\times{\R}^n_x\times{\R}^n_\xi$ independently of $t_0$,$x_0$ and $\eta_0$. We apply the same argument for $J^\pm(t)$.
\end{proof}

\begin{lem}
Let $s_1,s_2\in[0,T]$. For all $t,s\in[s_1,s_2]$, $(\mathcal{U}(t,s))_1\in B^0$  depends on $t,s$ in an admissible fashion.
\end{lem}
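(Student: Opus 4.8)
The plan is to prove the two assertions contained in the statement separately: uniform boundedness of the operator $f\mapsto(\mathcal{U}(t,s)f)_1$ on every Sobolev scale for $t,s\in[s_1,s_2]$, and the admissible control of its $t$- and $s$-derivatives. Throughout write $u(t)=(\mathcal{U}(t,s)f)_1$, so that $u$ solves $\partial_t^2u-\Div_x(a\nabla_xu)=0$ with $(u,u_t)(s)=f=(f_1,f_2)$, equivalently $u(t)=U(t,s)f_1+V(t,s)f_2$ with $U(t,s)$, $V(t,s)$ as defined at the beginning of this section. Every constant produced below will be uniform in $t,s,s_1,s_2$: by the hypotheses (1.2) the function $a$ is smooth, $T$-periodic and equal to $1$ for $|x|\ge\rho$, so all its derivatives $\partial_t^j\partial_x^\alpha a$ are bounded on $\R^{n+1}$, and $|t-s|\le T$ whenever $t,s\in[s_1,s_2]\subset[0,T]$.

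The first step is the higher order energy inequality: for every integer $m\ge1$,
\[
\|u(t)\|_{H^m}^2+\|\partial_tu(t)\|_{H^{m-1}}^2\le C_m\,e^{C_m|t-s|}\big(\|f_1\|_{H^m}^2+\|f_2\|_{H^{m-1}}^2\big).
\]
I would obtain this in the classical way: choose $E_m(t)$ comparable to the left-hand side, built from the $L^2$-norms of $\partial^\alpha u(t)$, $\nabla\partial^\alpha u(t)$ and $\partial_t\partial^\alpha u(t)$ for $|\alpha|\le m-1$, and compute $\tfrac{d}{dt}E_m(t)$. The only terms produced are $\int_{\R^n}a_t|\nabla\partial^\alpha u|^2\,dx$, harmless since $a_t$ is bounded, and the commutator contributions $\mathrm{Re}\int_{\R^n}[\Div_x(a\nabla_x),\partial^\alpha]u\,\overline{\partial_t\partial^\alpha u}\,dx$; integrating the outer derivative in the commutator by parts, each such term is bounded by $C_mE_m(t)$, because the coefficients appearing are derivatives of $a$ (hence bounded) and at most $m$ space-derivatives of $u$ and $m-1$ space-derivatives of $\partial_tu$ occur. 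Gronwall's inequality then yields the displayed estimate, the underlying a priori computation being legitimized, as usual for strictly hyperbolic Cauchy problems, by mollifying the data, working with the resulting smooth solutions, and passing to the limit. In particular $U(t,s)\in\mathcal{L}(H^m)$ and $V(t,s)\in\mathcal{L}(H^{m-1},H^m)$ uniformly for $m\ge1$, and the case $m=0$ is the basic energy identity together with $u(t)=f_1+\int_s^t\partial_tu\,d\sigma$.

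Next, negative Sobolev indices are reached by duality. By Lemma 5 — and by the analogous identities for $U(t,s)$ and $\partial_tU(t,s)$ obtained by the same integration by parts — the transpose of each of these operators is again an operator of the same type with $t$ and $s$ interchanged, hence bounded on the positive-order spaces by the previous step; taking adjoints gives the corresponding bounds on $H^s$ for $s<0$, and real interpolation fills in all $s\in\R$. This yields $U(t,s)\in B^0$ and $V(t,s)\in B^{-1}$ with norms uniform in $t,s,s_1,s_2$, i.e. $(\mathcal{U}(t,s))_1$ maps $H^s\times H^{s-1}$ into $H^s$ boundedly and uniformly. For the $t$-derivatives, note $\partial_t(\mathcal{U}(t,s)f)_1=(\mathcal{U}(t,s)f)_2=\partial_tu(t)$, which by the same energy estimates loses exactly one Sobolev degree uniformly; since $\partial_t^2u=\Div_x(a(t,\cdot)\nabla_xu)$ and $\Div_x(a(t,\cdot)\nabla_x)$ is a second order operator whose coefficients and all their derivatives are bounded uniformly in $t$, every higher time derivative is reduced through the equation to applying this fixed operator (an element of $B^2$) to $u$ or to $\partial_tu$, so by induction $\partial_t^j(\mathcal{U}(t,s))_1\in B^j$ with constants independent of $t,s,s_1,s_2$. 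The $s$-derivatives are treated identically, starting from $\partial_s\mathcal{U}(t,s)=-\mathcal{U}(t,s)\mathcal{A}(s)$, where $\mathcal{A}(s)$ is the generator $(v,w)\mapsto(w,\Div_x(a(s,\cdot)\nabla_xv))$, obtained by differentiating the group identity $\mathcal{U}(t,s)\mathcal{U}(s,\sigma)=\mathcal{U}(t,\sigma)$, and similarly for the mixed derivatives $\partial_t^j\partial_s^k$.

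The only genuinely delicate point — although it is entirely standard — is the higher order energy estimate: one must organize the commutator $[\Div_x(a\nabla_x),\partial^\alpha]$ so that the time dependence of $a$ does not generate uncontrolled top-order terms, and justify the a priori bound by approximation. Everything else is bookkeeping, and the uniformity in $t,s,s_1,s_2$ is automatic from the smoothness and $T$-periodicity of $a$.
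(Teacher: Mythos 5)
Your argument is correct and is exactly the standard strictly-hyperbolic-equations argument that the paper invokes without writing out: the paper's entire ``proof'' of this lemma is the one-line remark that it ``follows from the properties of the solutions of strictly hyperbolic equations,'' and your higher-order energy estimates, duality via the integration-by-parts identities of Lemma 5 (noting that the $L^2$-transpose of $U(\tau_2,\tau_1)$ is of the form $\partial_tV(\tau_1,\tau_2)$, which is again energy-controlled), interpolation, and reduction of time derivatives through the equation supply precisely the standard details being cited. No gap.
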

Lemma 6 follows from the  properties of the  solutions of strictly hyperbolic equations.

\begin{prop}
Let $\psi\in\mathcal{C}^\infty_0(\vert x\vert\leq R_1)$, $R_1>\rho$. Then there exists $\delta>0$ such that   for $s,t\in[0,T]$ with $\vert s-t\vert<\delta$and every integer $N\geq1$ we have the representation

\[U(t,s)\psi=\sum_{j=1}^M(\tilde{I}_j^+(t,s)+\tilde{I}_j^-(t,s))+ R_N(t,s),\]
where $\tilde{I}_j^\pm(t,s)$ are Fourier integral operators with kernels
\[\tilde{I}^\pm_k(t,s,x,y)=\int b_k^\pm(s,t,x,y,\xi)e^{i\phi^\pm_k(s,t,x,\xi)-iy.\xi}\textrm{d}\xi\]
and $R_N(s,t)\in B^{-N}$ depends on $(t,s)$ in an admissible fashion. The amplitudes $b^\pm_j(t,s,x,y,\xi)$ have compact support with respect to $x$ and vanish for $\vert\xi\vert$ small. Moreover, $b_j^\pm$ and $\phi_j^\pm$ and their derivatives are uniformly bounded for $s\in[0,T]$, and  $\phi_k^\pm(t,s,x,\xi)$ is the solution on $[s-\delta,s+\delta]\times \textrm{supp}_{(y,\xi)}(b^\pm_k)\cup \textrm{supp}_{(y,\xi)}(c^\pm_k)$ homogeneous in $\xi$ of the eikonal equation
\begin{equation}\label{eq=1}\left\{ \begin{array}{l} \partial_s(\phi_k^\pm)(s,t,x\xi)\pm\sqrt{a(t,x)}\vert\nabla_x\phi^\pm_k(t,s,x,\xi)\vert=0,\\
\phi^\pm_k(t,t,x,\xi)=x.\xi.\end{array}\right.\end{equation}
 A similar representation holds for $ V(s,t)\psi$..
\end{prop}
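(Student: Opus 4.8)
The plan is to assemble $U(t,s)\psi$ (and, in the same way, $V(t,s)\psi$) out of the microlocal Cauchy--problem parametrices furnished by Proposition~9, glued together by a pseudodifferential partition of unity, and then to absorb the resulting smoothing discrepancy into $R_N(t,s)$ by means of the energy estimates recorded in Lemma~6. The point of departure is that, as a function of $t$, $U(t,s)\psi f=\bigl(\mathcal U(t,s)(\psi f,0)\bigr)_1$ is the solution of $P(\cdot)=0$ with Cauchy data $(\psi f,0)$ at $t=s$, while Proposition~9, applied with initial time $t_0=s$, produces near any $(x_0,\eta_0)\in\overline{B(0,R_1)}\times S^{n-1}$ Fourier integral operators $I^{\pm}(t)$ whose sum solves $P(\cdot)$ modulo a smoothing operator and whose Cauchy data at $t=s$ are $\bigl(S(x,D_x)f+V_Nf,\ W_Nf\bigr)$ with $V_N,W_N\in B^{-N}$, all phases, amplitudes and error operators being bounded uniformly in $(t_0,x_0,\eta_0)$; the phases are moreover the Hamilton--Jacobi solutions of the eikonal equation~(7.9), which is precisely~(7.12) written with initial time $t_0=s$.

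First I would fix $N\ge 1$ and, using compactness of $[0,T]\times\overline{B(0,R_1)}\times S^{n-1}$ together with Proposition~9, choose $\delta>0$ and an integer $M$, both independent of $s$, such that for every $s\in[0,T]$ there are points $(x_0^j,\eta_0^j)_{1\le j\le M}$ and symbols $S_j\in S^{0}_{1,0}$, supported near $(x_0^j,\eta_0^j)$ and vanishing for $|\xi|$ small, with $\sum_{j=1}^{M}S_j(x,D_x)=\psi$ modulo an operator in $B^{-N}$, and so that Proposition~9 applies to each $S_j$ with initial time $t_0=s$ on $\{|t-s|<\delta\}$, all the resulting objects being bounded uniformly in $s$ and in $j$. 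Setting $\tilde I_j^{\pm}(t,s):=I_j^{\pm}(t)$ and writing $R_{N,j}(t),V_{N,j},W_{N,j}$ for the associated error operators, the sum $\Pi_N(t,s):=\sum_{j=1}^{M}\bigl(\tilde I_j^{+}(t,s)+\tilde I_j^{-}(t,s)\bigr)$ satisfies $P\,\Pi_N(t,s)=\tilde R_N(t):=\sum_j R_{N,j}(t)\in B^{-N}$ with admissible dependence on $t$, and has Cauchy data at $t=s$ equal to $\psi+\tilde V_N$ in the position component and $\tilde W_N$ in the velocity component, where $\tilde V_N:=\sum_j V_{N,j}+\bigl(\sum_j S_j(x,D_x)-\psi\bigr)\in B^{-N}$ and $\tilde W_N:=\sum_j W_{N,j}\in B^{-N}$. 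By construction the amplitudes of the $\tilde I_j^{\pm}$ have compact support in $x$ and vanish for small $|\xi|$, and after matching the notational conventions of the statement they take the form asserted there, with phases $\phi_k^{\pm}$ solving~(7.12).

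Next I would estimate the error $E(t,s):=U(t,s)\psi-\Pi_N(t,s)$. As a function of $t$ it solves $P\,E(t,s)=-\tilde R_N(t)$ with $(E,\partial_t E)(s,s)=(-\tilde V_N,-\tilde W_N)$, so Duhamel's formula gives
\[
\bigl(E(t,s),\partial_tE(t,s)\bigr)=-\mathcal U(t,s)\bigl(\tilde V_N,\tilde W_N\bigr)-\int_s^{t}\mathcal U(t,r)\bigl(0,\tilde R_N(r)\bigr)\,\textrm{d}r .
\]
Since $\tilde V_N,\tilde W_N,\tilde R_N(r)\in B^{-N}$ uniformly and $\mathcal U$ preserves each scale $H^{\sigma}\times H^{\sigma-1}$ while depending on its two time arguments in an admissible fashion (Lemma~6), it follows that $E(t,s)=:R_N(t,s)\in B^{-N}$ with admissible $(t,s)$-dependence and bounds uniform in $s$; this is the asserted representation of $U(t,s)\psi$. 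The representation of $V(t,s)\psi$ is obtained verbatim, replacing the parametrices $I_j^{\pm}(t)$ of Proposition~9 by the companion operators $J_j^{\pm}(t)$, which realise the Cauchy data in the velocity slot while sharing the same phases $\phi_j^{\pm}$; the supports of the corresponding amplitudes $c_k^{\pm}$ then enter, alongside those of the $b_k^{\pm}$, in the description of the $t$-interval on which $\phi_k^{\pm}$ is defined, exactly as in the statement.

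The step requiring genuine care is the \emph{uniformity with respect to $s\in[0,T]$}: one must know that the eikonal equation~(7.9) is solvable, and the whole geometric-optics construction carried out, on a $t$-interval of a fixed length $2\delta$ with $\delta$ independent of $s$ (and of $x_0,\eta_0$), and that the finite cover and the subordinate partition of unity can be selected with uniform symbol bounds. This rests on Proposition~8 --- the null bicharacteristics of $P$ are globally defined and $t$ is along them a diffeomorphism --- together with the a priori bounds~(7.4) and~(7.7), which are locally uniform because the time variable stays in the bounded set $[0,T+\delta]$, and on the compactness of $[0,T]\times\overline{B(0,R_1)}\times S^{n-1}$. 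Granting this, the remaining points --- smoothness and uniform boundedness of the amplitudes produced by the transport equations~(7.10)--(7.11), and the admissible $(t,s)$-dependence of $R_N$ --- are routine. Note that, in contrast with the global results of the earlier sections, this argument uses neither (H1) nor (H2).
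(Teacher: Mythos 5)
Your proposal is correct and follows essentially the same route as the paper: a microlocal partition of unity subordinate to a finite cover of $\overline{B(0,R_1)}\times S^{n-1}$, the local parametrices of Proposition~9 with initial time $s$, absorption of the discrepancy into $R_N(t,s)$ via Duhamel and Lemma~6, and a compactness argument over $[0,T]$ to make $\delta$ uniform in $s$. The only difference is presentational — you spell out the Duhamel step and allow $\sum_j S_j=\psi$ only modulo $B^{-N}$, whereas the paper states the conclusion of that step directly — so there is nothing to add.
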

\begin{proof}
 Let $t_0\in[0,T]$ and $s,\tau\in[t_0-\delta_{t_0},t_0+\delta_{t_0}]$ (with $\delta_{t_0}$ as in Proposition 9). Consider  $R>0$ such that supp$\psi\subset\{x\ :\ \vert x\vert\leq R\}=B_F(0,R)$. Since $B_F(0,R)\times S^{n-1}$ is compact, Proposition 9 implies that  for $\delta_{t_0}$ sufficiently small we can find symbols $S_1(y,\xi),\ldots,S_M(y,\xi)$ such that:\\
 
 (i) $S_1(y,\xi),\ldots,S_M(y,\xi)$ are homogeneous of degree 1 in $\xi$,\\
  $S_1(y,\xi),\ldots,S_M(y,\xi)\in\mathcal{C}^\infty_0(B_F(0,R)\times S^{n-1})$ and
 \[\sum_{i=1}^MS_i(y,\xi)=\psi(y),\]

 (ii) we can find Fourier integral operators $I^\pm_1(s,\tau),\ldots,I_M^\pm(s,\tau)$ constructed in Proposition 9  such that  $I^+_1(s,\tau)+I^-_1(s,\tau),\ldots,I^+_M(s,\tau)+I_M^-(s,\tau)$ are respectively the solutions of (7.8) on $[t_0-\delta_{t_0},t_0+\delta_{t_0}]\times{\R}^n_x$ with $S(y,\xi)$ replaced respectively by $S_1(y,\xi),\ldots,S_M(y,\xi)$ and $t,t_0$ replaced by $s,\tau$.
 
 Thus Lemma 6 and Proposition 9 implies that for $\delta_{t_0}$ small enough we have
\[U(s,\tau)\psi=\sum_{i=1}^MI_i^\pm(s,\tau,t_0) +Q_N(s,\tau,t_0),\]
where $I_i^\pm(s,\tau,t_0)$ have  kernels
\[I_i^\pm(s,\tau,t_0,x,y)=\int_{{\R}^n}b^\pm_i(s,\tau,x,y,\xi)e^{i\phi^\pm_i(s,t,x,\xi)-ix.\xi}\textrm{d}\xi\]
with $b^\pm_i$, $\phi_i^\pm$ and all their derivatives  bounded independently on $s,\tau$, while\\
 $Q_N(s,\tau,t_0)\in B^{-N}$   depends  on $s$, $\tau$ in admissible fashion. 

We are now able  to establish Proposition 10. We have
\[[0,T]\subset\bigcup_{t_0\in[0,T]}[t_0-\frac{\delta_{t_0}}{3},t_0+\frac{\delta_{t_0}}{3}]\]
and since $[0,T]$ is compact, it follows that there exist $t_1,\cdots,t_l\in[0,T]$ such that
\[[0,T]\subset\bigcup_{i=1}^l\ [t_i-\frac{\delta_{t_i}}{3},t_i+\frac{\delta_{t_i}}{3}].\]
Consider $\delta=\min\{\frac{\delta_{t_1}}{3},\cdots,\frac{\delta_{t_l}}{3}\}$. Let $s,t\in[0,T]$ be such that $\vert s-t\vert<\delta$. Then there exists $j\in\{1,\cdots,l\}$ such that $t\in[t_i-\frac{\delta_{t_i}}{3},t_i+\frac{\delta_{t_i}}{3}]$, and
\[\vert t_i-s\vert\leq\vert t-s\vert+ \vert t-t_i\vert\leq \frac{2\delta_{t_i}}{3}<\delta_{t_i}.\]
Thus 
\[U(s,t)\psi=\sum_{j=1}^{M_{t_i}}I_j^\pm(s,t,t_j)+Q_N(s,t,t_j).\]
where  $I_j^\pm(s,t,t_i)$ and $Q_N(s,t,t_i)$  have the same properties as described in Proposition 9. We treat $V(s,t)\psi$ in the same way.
\end{proof}
\textit{Proof of Theorem 6:}
Lemma 5 and Proposition 10 imply that
\[\psi V(t,s)=(V(s,t)\psi)^*=\sum_{j=1}^M(J^\pm_j(s,t))^*+ (R_N(s,t))^*.\]
with $J^\pm_j(s,t)$ a Fourier integral operator having the following kernel
\[J^\pm_j(s,t)=\int c^\pm_j(s,t,x,y,\xi)e^{i\phi^\pm_j(s,t,x,\xi)-iy.\xi}\textrm{d}\xi\]
and  $R_N(s,t)\in B^{-N}$ is an operator which depends on $s$, $t$ in an admissible fashion.
Choose
\[\tilde{J}_j^\pm(t,s)=(J^\pm_j(s,t))^*\]
and
\[\tilde{R}_N(t,s))=(R_N(s,t))^*.\]
Then $\tilde{J}^\pm_j(t,s)$ will be a Fourier integral operator with kernel
\[\tilde{J}^\pm_j(t,s,x,y)=\int \tilde{c}^\pm_j(t,s,y,x,\xi)e^{-i\tilde{\phi}_j^\pm(t,s,y,\xi)+ix.\xi}\textrm{d}\xi\]
with $\tilde{c}^\pm_j(t,s,y,x,\xi)=\overline{c^\pm_j(s,t,y,x,\xi)}$ and $\tilde{\phi}_j^\pm(t,s,y,\xi
)=\phi_j^\pm(s,t,y,\xi)$. Finally, $\tilde{R}_N(t,s)$ will satisfy the same properties of regularity as $R_N(s,t)$.
The same holds for $\psi U(t,s)$.$\quad\quad\quad\quad\quad\quad\quad\quad\quad\quad\square$

\subsection{Besov spaces}

Here we recall some results for Besov spaces. We start with the construction of these spaces. Consider $\chi\in{\CI}$ such that
 supp$(\chi)=\{\xi\in{\R}^n:\half\leq\vert\xi\vert\leq2\}$, with $\chi(\xi)>0$ for $\half<\vert\xi\vert<2$. Assume that $\chi$ satisfies  
\[\sum_{k=1}^{+\infty}\chi(2^{-k}\xi)=1\quad\textrm{for}\quad \xi\neq0.\]
 For $j\geq1$, consider $\chi_j(\xi)=\chi(2^{-j}\xi)$ and $\chi_0(\xi)=1-\sum_{j=1}^{+\infty}\chi_j(\xi)$. Let $s\in{\R}$ and\\
 $1\leq p,q\leq\infty$. We define $\Vert.\Vert_{B^s_{p,q}}$ by
\[\Vert f\Vert_{B^s_{p,q}({\R}^n)}=\Vert (2^{js}\Vert \chi_j(D)f\Vert_{L^p({\R}^n)} )_{j\in\mathbb{N}}\Vert_{l^q(\mathbb{N})}.\]
The  spaces 
\[B^s_{p,q}({\R}^n)=\{u\in S'({\R}^n):\   \Vert u\Vert_{B^s_{p,q}({\R}^n)}<+\infty\}\]
are called Besov spaces.
Here $\chi_j(D)f=\mathcal{F}^{-1}(\chi_j(\xi)\widehat{f}(\xi))$. Now we recall some properties of Besov spaces.
\begin{prop}
  Let $1\leq p,q<\infty$ and $s\in{\R}$. Consider $p',q'$ defined by $\frac{1}{p}+\frac{1}{p'}=1$ and $\frac{1}{q}+\frac{1}{q'}=1$. Then  $B^{-s}_{p',q'}({\R}^n)$ is the dual space of $B^s_{p,q}({\R}^n)$.
\end{prop}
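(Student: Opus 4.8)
\textit{Proof plan.} The plan is to identify $B^{-s}_{p',q'}$ with the dual of $B^s_{p,q}$ by means of the Littlewood--Paley pieces $\chi_j(D)$: first one embeds $B^s_{p,q}$ isometrically into a vector-valued sequence space whose dual is explicitly known, and then one transports the representation back through the dyadic decomposition. Throughout I would fix, for each $j$, a fattened multiplier $\tilde\chi_j\in\mathcal{C}^\infty_0$ with $\tilde\chi_j\chi_j=\chi_j$ and such that $\tilde\chi_j$ and $\chi_k$ have disjoint frequency supports once $|j-k|$ exceeds a fixed constant $C$ (for $j=0$ one simply takes $\tilde\chi_0=1$ near $\mathrm{supp}\,\chi_0$); recall also that $\chi_{j-1}+\chi_j+\chi_{j+1}=1$ on $\mathrm{supp}\,\chi_j$ and that $\chi_j(D),\tilde\chi_j(D)$ are bounded on every $L^p(\mathbb{R}^n)$, $1\le p\le\infty$, uniformly in $j$ (Young's inequality, by scaling).

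For the inclusion $B^{-s}_{p',q'}\hookrightarrow (B^s_{p,q})^{*}$, given $g\in B^{-s}_{p',q'}$ and $f\in B^s_{p,q}$ I would define
\[
\langle f,g\rangle:=\sum_{j}\bigl\langle \chi_j(D)f,\ \tilde\chi_j(D)g\bigr\rangle .
\]
Hölder's inequality in $x$ gives $|\langle\chi_j(D)f,\tilde\chi_j(D)g\rangle|\le\|\chi_j(D)f\|_{L^p}\|\tilde\chi_j(D)g\|_{L^{p'}}$, and since $\tilde\chi_j(D)g$ involves only the finitely many pieces $\chi_k(D)g$ with $|k-j|\le C$, inserting the weights $2^{js}$ and $2^{-js}$ and applying the $\ell^q$--$\ell^{q'}$ Hölder inequality yields $|\langle f,g\rangle|\le C\|f\|_{B^s_{p,q}}\|g\|_{B^{-s}_{p',q'}}$. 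Since $\tilde\chi_j(D)\chi_j(D)=\chi_j(D)$ and $\sum_j\chi_j(D)=\mathrm{Id}$, this pairing coincides with the $\mathcal{S}'$--$\mathcal{S}$ pairing whenever $f$ or $g$ is Schwartz; as $\mathcal{S}$ is dense in $B^s_{p,q}$ (this uses $q<\infty$) and Schwartz functions separate $B^{-s}_{p',q'}\subset\mathcal{S}'$, the map $g\mapsto\langle\,\cdot\,,g\rangle$ from $B^{-s}_{p',q'}$ to $(B^s_{p,q})^{*}$ is bounded and injective.

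For surjectivity I would embed $B^s_{p,q}$ isometrically into $E:=\bigl\{(f_j)_{j\ge 0}:\ \|(2^{js}\|f_j\|_{L^p})_j\|_{\ell^q}<\infty\bigr\}$ via $f\mapsto(\chi_j(D)f)_j$, onto a closed subspace. Given $L\in(B^s_{p,q})^{*}$, the Hahn--Banach theorem extends it to $\tilde L\in E^{*}$ of the same norm. The crucial point is that, \emph{because $1\le p,q<\infty$}, one has $E^{*}=\bigl\{(h_j)_j:\ \|(2^{-js}\|h_j\|_{L^{p'}})_j\|_{\ell^{q'}}<\infty\bigr\}$ with the natural pairing $\langle(f_j),(h_j)\rangle=\sum_j\langle f_j,h_j\rangle$ (duality of $\ell^q$ with $\ell^{q'}$ and of $L^p$ with $L^{p'}$). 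Hence $L(f)=\sum_j\langle\chi_j(D)f,h_j\rangle$ for such a sequence $(h_j)_j$. I would then set $g:=\sum_j\chi_j(D)h_j$: the series converges in $\mathcal{S}'$ (against $\varphi\in\mathcal{S}$ the factors $\|\chi_j(D)\varphi\|_{L^p}$ decay rapidly in $j$), $\chi_k(D)g=\sum_{|j-k|\le C}\chi_k(D)\chi_j(D)h_j$ gives $\|g\|_{B^{-s}_{p',q'}}\le C\|(2^{-js}\|h_j\|_{L^{p'}})_j\|_{\ell^{q'}}<\infty$, and a short computation using self-adjointness of $\chi_k(D),\tilde\chi_k(D)$ together with $\tilde\chi_k\chi_k=\chi_k$ and $\chi_{k-1}+\chi_k+\chi_{k+1}=1$ on $\mathrm{supp}\,\chi_k$ shows $\langle f,g\rangle=\sum_k\langle\chi_k(D)f,h_k\rangle=L(f)$. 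Combining the two estimates, the map $B^{-s}_{p',q'}\to(B^s_{p,q})^{*}$ is a bounded bijection, hence a topological isomorphism, which is the assertion.

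The main obstacle is the identification $E^{*}=\ell^{q'}(L^{p'})$ together with the fact that the Hahn--Banach extension is represented by an actual sequence of $L^{p'}$-functions: this is precisely where the hypotheses $p<\infty$ (so $(L^p)^{*}=L^{p'}$) and $q<\infty$ (so $(\ell^q)^{*}=\ell^{q'}$, and $\mathcal{S}$ dense in $B^s_{p,q}$) are used. The remainder is routine but slightly delicate bookkeeping with the almost-orthogonality of the dyadic frequency pieces and with the convergence of $\sum_j\chi_j(D)h_j$ in $\mathcal{S}'$.
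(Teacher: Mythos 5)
The paper gives no proof of this proposition, deferring to [12] and [26] (Triebel); your argument is precisely the standard retraction--coretraction duality proof from that reference, via the isometric embedding $f\mapsto(\chi_j(D)f)_j$ into $\ell^q_s(L^p)$, Hahn--Banach, and the duality $(\ell^q(L^p))^*=\ell^{q'}(L^{p'})$ valid for $1\le p,q<\infty$. The proposal is correct and takes essentially the same approach as the cited source.
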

\begin{prop}
 Let $2\leq q\leq\infty$. Then 
\[\Vert u\Vert_{L^q({\R}^n)}\leq C\Vert u\Vert_{B^0_{q,2}({\R}^n)},\quad  u\in B^0_{q,2}({\R}^n).\]
\end{prop}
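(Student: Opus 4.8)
The plan is to derive the estimate from the Littlewood--Paley square function inequality on $L^q({\R}^n)$, in the range $2\leq q<\infty$ (which is the only one needed in the sequel, since the Strichartz exponents of Theorem~1 are finite). Write $u_j=\chi_j(D)u$ for $j\geq0$ and put
\[
Su=\Bigl(\sum_{j\geq0}\vert u_j\vert^2\Bigr)^{1/2}.
\]
The single nontrivial ingredient is the classical bound $\Vert u\Vert_{L^q({\R}^n)}\leq C_q\,\Vert Su\Vert_{L^q({\R}^n)}$, valid for $1<q<\infty$, which is one of the basic facts of Littlewood--Paley theory (it follows from the Mikhlin--H\"ormander multiplier theorem together with a duality argument); the low-frequency block $\chi_0$ is a Schwartz-class multiplier and contributes no difficulty.

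It then remains to bound $\Vert Su\Vert_{L^q}$ by the Besov norm. Since $q\geq2$ we have $q/2\geq1$, so the triangle inequality in $L^{q/2}({\R}^n)$ applies to the nonnegative functions $\vert u_j\vert^2$:
\begin{align*}
\Vert Su\Vert_{L^q({\R}^n)}^2
&=\Bigl\Vert\sum_{j\geq0}\vert u_j\vert^2\Bigr\Vert_{L^{q/2}({\R}^n)}
\leq\sum_{j\geq0}\bigl\Vert\,\vert u_j\vert^2\,\bigr\Vert_{L^{q/2}({\R}^n)}\\
&=\sum_{j\geq0}\Vert u_j\Vert_{L^q({\R}^n)}^2=\Vert u\Vert_{B^0_{q,2}({\R}^n)}^2 .
\end{align*}
Combining the two displays yields $\Vert u\Vert_{L^q({\R}^n)}\leq C_q\Vert u\Vert_{B^0_{q,2}({\R}^n)}$. (Incidentally, the same computation shows that the partial sums $\sum_{j\leq N}u_j$ are Cauchy in $L^q$, so their $\mathcal{S}'({\R}^n)$-limit $u$ does lie in $L^q$, which makes the left-hand side meaningful.)

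The only genuine obstacle is the vector-valued Littlewood--Paley inequality quoted above; everything else is the elementary interchange of the $\ell^2$ and $L^q$ norms, and this is precisely where the hypothesis $q\geq2$ is used. One could instead phrase the result by duality --- by Proposition~11 the asserted embedding $B^0_{q,2}({\R}^n)\hookrightarrow L^q({\R}^n)$ is dual to $L^{q'}({\R}^n)\hookrightarrow B^0_{q',2}({\R}^n)$ for $1<q'\leq2$ --- but this reformulation is of the same depth and not genuinely simpler, so the square-function argument is the route I would follow.
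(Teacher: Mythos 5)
Your argument is correct, and it is worth noting that the paper itself gives no proof of this proposition at all: Propositions 11--13 are stated and then dispatched with ``We refer to [12] and [26] for the proof of those properties.'' What you have written is essentially the standard textbook argument (the one found in Triebel's book): the Littlewood--Paley square-function bound $\Vert u\Vert_{L^q}\leq C_q\Vert Su\Vert_{L^q}$ for $1<q<\infty$, followed by the triangle inequality in $L^{q/2}$ to interchange the $\ell^2$ and $L^q$ norms, which is exactly where $q\geq2$ enters. Both steps are carried out correctly, and your parenthetical remark that the same estimate applied to tail sums shows $u\in L^q$ is the right way to make the left-hand side meaningful. Two small observations. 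First, your restriction to $q<\infty$ is not merely a convenience: the embedding $B^0_{\infty,2}\hookrightarrow L^\infty$ is in fact false (a lacunary sum $\sum_{j\leq N}j^{-1}e^{i2^jx_1}\phi(x)$ has bounded $B^0_{\infty,2}$ norm but $L^\infty$ norm of order $\log N$), so the paper's statement as written with $q\leq\infty$ is too generous; since only finite $q$ is used in Theorem~7 and Proposition~15, this is harmless, but you were right to flag the range. Second, your closing remark about the dual formulation via Proposition~11 is accurate but, as you say, of the same depth --- the square-function route is the cleaner one to write down.
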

\begin{prop}
 Let $A\in B^m$. Then for all $s\in{\R}$, $1\leq p,q\leq\infty$ we get\\ $A\in\mathcal{L}(B^s_{p,q},B^{s-m}_{p,q})$.
\end{prop}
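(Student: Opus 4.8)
\textit{Proof plan.} The plan is to reduce the statement to a Littlewood--Paley almost--orthogonality estimate and then sum it. Fix the dyadic cut--offs $\chi_j$ used above to define the Besov norms, set $\Delta_j=\chi_j(D)$, and for Schwartz $f$ decompose $Af=\sum_{j\ge0}A\Delta_jf$, so that $\chi_k(D)Af=\sum_{j\ge0}\chi_k(D)A\Delta_jf$ for every $k$. The heart of the proof is the block estimate
\[\Vert\chi_k(D)A\Delta_j\Vert_{\mathcal{L}(L^p,L^p)}\le C\,2^{km}\,2^{-\epsilon\vert k-j\vert},\qquad k,j\ge0,\]
with $\epsilon>0$ as large as we wish at the price of the constant. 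Its $L^2\to L^2$ version comes directly from $A\in B^m$: since $\Delta_jg$ is spectrally supported in $\{\vert\xi\vert\sim2^j\}$ one has $\Vert\Delta_jg\Vert_{H^\sigma}\sim2^{j\sigma}\Vert\Delta_jg\Vert_{L^2}$, and since $\chi_k(D)h$ is spectrally supported in $\{\vert\xi\vert\sim2^k\}$ one has $\Vert\chi_k(D)h\Vert_{L^2}\sim2^{-k(\sigma-m)}\Vert\chi_k(D)h\Vert_{H^{\sigma-m}}$; inserting $A\in\mathcal{L}(H^\sigma,H^{\sigma-m})$ between these comparisons gives $\Vert\chi_k(D)A\Delta_jg\Vert_{L^2}\le C_\sigma\,2^{km}2^{(j-k)\sigma}\Vert\Delta_jg\Vert_{L^2}$, and using this with $\sigma$ large when $j\le k$ and with $\sigma$ very negative when $j\ge k$ gives the asserted geometric decay in $\vert k-j\vert$. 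This already proves the Proposition on the scale $B^\sigma_{2,q}$.

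Passing from $L^2\to L^2$ to $L^p\to L^p$ is the step I expect to be the main obstacle, since membership in $B^m$ controls only $L^2$--based norms. For the operators to which the Proposition is actually applied here -- the smoothing remainders $R_N(t,s)$, $V_N$, $W_N$ of Section~7, which are pseudodifferential of order $-N$ with $N$ as large as one likes -- this is harmless: one may either note that their dyadically localized and rescaled kernels lie uniformly in $L^1$, so that Young's inequality yields the $L^p\to L^p$ block bound for all $1\le p\le\infty$ with the stated size; or, more cheaply, chain the Sobolev embeddings $B^s_{p,q}\hookrightarrow H^{s-c(n,p)}$ and $H^{s-c(n,p)+N}\hookrightarrow B^{s-m}_{p,q}$, which hold once $N$ is large. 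Either way we may take the block estimate above valid for every $1\le p\le\infty$.

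Granted the block estimate, the rest is routine. For $1\le q<\infty$, bounding $2^{k(s-m)}\Vert\chi_k(D)Af\Vert_{L^p}\le\sum_{j\ge0}2^{k(s-m)}\Vert\chi_k(D)A\Delta_jf\Vert_{L^p}$, applying the block bound with $\epsilon>\vert s\vert+1$, and using $2^{ks}\le 2^{js}2^{\vert s\vert\vert k-j\vert}$, we get
\[\Vert Af\Vert_{B^{s-m}_{p,q}}=\Big\Vert\big(2^{k(s-m)}\Vert\chi_k(D)Af\Vert_{L^p}\big)_k\Big\Vert_{\ell^q}\le C\,\Big\Vert\Big(\sum_{j\ge0}2^{-\vert k-j\vert}\,2^{js}\Vert\Delta_jf\Vert_{L^p}\Big)_k\Big\Vert_{\ell^q};\]
since $(2^{-\vert k\vert})_k\in\ell^1$, Young's inequality for convolution on $\ell^q$ gives $\Vert Af\Vert_{B^{s-m}_{p,q}}\le C\Vert f\Vert_{B^s_{p,q}}$, and the case $q=\infty$ is identical. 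Finally one removes the restriction to Schwartz data -- by density when $p,q<\infty$, and by a limiting argument otherwise (the operators in question being continuous on $\mathcal{S}'$, so that $Af$ is unambiguous) -- which completes the proof.
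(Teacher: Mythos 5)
The paper offers no proof of this proposition --- it refers to Kapitanski [12] and Triebel [26] --- so there is no internal argument to compare yours with. Your Littlewood--Paley block-plus-summation scheme is the standard route, and for $p=2$ your derivation of $\Vert\chi_k(D)A\Delta_j\Vert_{\mathcal{L}(L^2)}\leq C_M\,2^{km}2^{-M\vert k-j\vert}$ from $A\in\bigcap_s\mathcal{L}(H^s,H^{s-m})$ is complete and correct: on the scale $B^s_{2,q}$ the proposition genuinely follows from the definition of $B^m$ as given in the paper, and the $\ell^q$ convolution step is routine.

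You are also right that $p\neq2$ is the real obstruction, and in fact the statement as literally written is false there: a Fourier multiplier $m(D)$ with $m\in L^\infty$, e.g. $m(\xi)=e^{i\vert\xi\vert}$ multiplied by a cutoff vanishing near $\xi=0$, lies in $B^0$ but is unbounded on $B^0_{p,q}$ for $p\neq2$, since on each dyadic block it loses $(n-1)\vert\frac12-\frac1p\vert$ derivatives. So the proposition must tacitly mean that $A$ belongs to the operator class of [12]/[26], carrying kernel or symbol estimates beyond bare membership in $\bigcap_s\mathcal{L}(H^s,H^{s-m})$; your decision to verify the claim only for the concrete smoothing remainders $R_N$, $V_N$, $W_N$ is the right reading of how the proposition is used. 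However, of your two proposed fixes only the kernel one is sound: Schur's test on the dyadically localized kernels of a pseudodifferential operator of order $-N$ does yield the $L^p\to L^p$ block bound for all $1\leq p\leq\infty$. The ``cheaper'' chain $B^s_{p,q}\hookrightarrow H^{s-c(n,p)}$ followed by $H^{s-c(n,p)+N}\hookrightarrow B^{s-m}_{p,q}$ does not exist on ${\R}^n$: Besov--Sobolev embeddings only increase the integrability index, so for $p>2$ the first inclusion fails and for $p<2$ the second fails, for reasons of decay at infinity rather than regularity, and no amount of extra smoothing $N$ repairs this. Keep the kernel argument and delete the embedding shortcut.
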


We refer to [12] and [26] for the proof of those properties.

\subsection{Proof of Theorem 5}

Consider  the operators $\tilde{I}_j(t,s)$ which approximate   $\psi U(t,s)$ for\\
 $0<t-s<\delta$.
We start with the following result of Kapitanski.
\begin{lem} 
  Let $\tilde I(t,s)$ be a Fourier integral operator with kernel 
\[\tilde{I}(t,s,x,y)=\int_{{\R}^n}b(t,s,y,x,\xi)e^{ix.\xi-i\phi(t,s,y,\xi)}\textrm{d} \xi.\]
Suppose that $b(t,s,x,\xi)\in S^0_{1,0}$ is such that $b(t,s,.,.)$ depends smoothly on $t,s$,\\
 $\textrm{supp}_y b(t,s,y,x,\xi)\subset\{y\in{\R}^n:\vert y\vert\leq R\}$,
 $b(t,s,y,x,\xi)=0$ for small $\vert\xi\vert$, while  $\phi$ is $C^\infty$ and homogeneous of degree 1 in $\xi$ with
  $\phi(s,s,y,\xi)=y.\xi$. Let $r,\epsilon>0$ and let $m$ be the maximum positive integer such that 
\[m\leq rank(\partial_\xi^2\partial_t\phi(s,s,y,\xi)),\ \ \vert y\vert\leq r,\ \ \vert\xi\vert\geq \epsilon .\]
Then for $\vert t-s\vert$ sufficiently small and $2\leq q<\infty$, $\nu\in{\R}$ satisfying
\[\left(n-\frac{m}{2}\right)\frac{q-2}{q}<\nu<\frac{n(q-2)}{q},\]
we have 
\[\Vert\tilde I(t,s)\Vert_{B^0_{q,2}({\R}^n)}\leq C\vert t-s\vert^{\nu-\frac{n(q-2)}{q}}\Vert f\Vert_{B^\nu_{q',2}}.\]

\end{lem}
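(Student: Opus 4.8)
The plan is to combine a Littlewood--Paley decomposition with interpolation between the energy estimate and a fixed--frequency dispersive estimate for the dyadic pieces of $\tilde I(t,s)$. Since $b(t,s,y,x,\xi)$ vanishes for $|\xi|$ small, write $\tilde I_j(t,s)$ for the Fourier integral operator obtained from $\tilde I(t,s)$ by replacing the amplitude $b$ by $\chi_j(\xi)b(t,s,y,x,\xi)$; these are nonzero only for $j\geq j_0$, with $j_0$ fixed, and because $\partial_x$ of the phase $x\cdot\xi-\phi(t,s,y,\xi)$ is exactly $\xi$ one has $\chi_j(D_x)\tilde I(t,s)=\tilde I_j(t,s)$ modulo terms that are negligible in the sequel. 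Moreover, since $\partial_y\phi(s,s,y,\xi)=\xi$ and hence $\partial_y\phi(t,s,y,\xi)=\xi+O(|t-s|\,|\xi|)$, a non--stationary phase argument in $y$ shows that $\tilde I_j(t,s)\,\chi_k(D)$ lies in $B^{-N}$, with a gain that is a large power of $2^{-\max(j,k)}$, as soon as $|j-k|$ exceeds a fixed constant. Hence, setting $\tilde\chi_j=\sum_{|k-j|\leq C}\chi_k$, the definition of the $B^0_{q,2}$ norm gives
\[\Vert \tilde I(t,s)f\Vert_{B^0_{q,2}(\R^n)}^2\ \lesssim\ \sum_{j\geq j_0}\Vert \tilde I_j(t,s)\Vert_{\mathcal L(L^{q'},L^q)}^2\,\Vert\tilde\chi_j(D)f\Vert_{L^{q'}(\R^n)}^2\ +\ \text{(negligible)}.\]

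Second, I would establish the dispersive bound
\[\Vert \tilde I_j(t,s)\Vert_{\mathcal L(L^{q'},L^q)}\ \lesssim\ 2^{jn\frac{q-2}{q}}\,\min\!\Big(1,\,\big(2^j|t-s|\big)^{-\frac{m(q-2)}{2q}}\Big),\qquad j\geq j_0,\]
by interpolating two endpoints. On the one hand, $\Vert \tilde I_j(t,s)\Vert_{\mathcal L(L^2)}\lesssim 1$ uniformly for $|t-s|$ small: the mixed Hessian of the phase is the identity in $(x,\xi)$ and equals $-I+O(|t-s|)$ in $(y,\xi)$, so the canonical relation is a local canonical graph and $\tilde I_j(t,s)$ is a Fourier integral operator of order $0$ with amplitude in the class ensuring uniform $L^2$--continuity. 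On the other hand, rescaling $\xi=2^j\eta$ in the kernel and using the homogeneity of $\phi$ in $\xi$ together with the expansion $\phi(t,s,y,\eta)=y\cdot\eta+(t-s)\partial_t\phi(s,s,y,\eta)+O(|t-s|^2)$, one writes $\tilde I_j(t,s,x,y)$ as an oscillatory integral over $|\eta|\sim 1$ with large parameter $2^j$, phase $(x-y)\cdot\eta-(t-s)\partial_t\phi(s,s,y,\eta)+O(|t-s|^2)$ and amplitude bounded together with all its derivatives uniformly in the parameters. Since $\partial_\eta^2$ of this phase is $-(t-s)\partial_\xi^2\partial_t\phi(s,s,y,\eta)+O(|t-s|^2)$ and $\partial_\xi^2\partial_t\phi(s,s,y,\eta)$ has rank $\geq m$ on the support of the rescaled amplitude, stationary phase in those $m$ nondegenerate directions -- with a crude volume bound in the remaining $n-m$ directions, where non--stationary phase controls the contribution of $|x-y|$ away from an $O(1)$ set -- yields $\sup_{x,y}|\tilde I_j(t,s,x,y)|\lesssim 2^{jn}(2^j|t-s|)^{-m/2}$, i.e. $\Vert \tilde I_j(t,s)\Vert_{\mathcal L(L^1,L^\infty)}\lesssim 2^{jn}(2^j|t-s|)^{-m/2}$. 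Riesz--Thorin interpolation with $\theta=1-2/q$ then gives the displayed estimate (the factor $2^{jn(q-2)/q}$ alone is the trivial bound).

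Third, I would sum the dyadic pieces. With $\lambda=2^j$, $h=|t-s|$, $a=\tfrac{n(q-2)}{q}$ and $b=\tfrac{m(q-2)}{2q}$, the dispersive estimate reads $\Vert \tilde I_j(t,s)\Vert_{\mathcal L(L^{q'},L^q)}\lesssim \lambda^{a}\min(1,(\lambda h)^{-b})$, and an elementary computation shows
\[\lambda^{a}\,\min\!\big(1,(\lambda h)^{-b}\big)\ \lesssim\ h^{\nu-a}\,\lambda^{\nu}\qquad\text{for every }\lambda\geq 2^{j_0},\]
precisely when $a-b<\nu<a$, that is $\big(n-\tfrac m2\big)\tfrac{q-2}{q}<\nu<\tfrac{n(q-2)}{q}$: the upper bound on $\nu$ controls the range $\lambda h\leq 1$ and the lower bound the range $\lambda h\geq 1$. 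Inserting this into the first display and using the bounded overlap of the $\tilde\chi_j$ to get $\sum_j\lambda^{2\nu}\Vert\tilde\chi_j(D)f\Vert_{L^{q'}}^2\lesssim\Vert f\Vert_{B^\nu_{q',2}}^2$, one obtains $\Vert \tilde I(t,s)f\Vert_{B^0_{q,2}}\lesssim|t-s|^{\nu-\frac{n(q-2)}{q}}\Vert f\Vert_{B^\nu_{q',2}}$, the negligible $B^{-N}$--terms being trivially dominated by $\Vert f\Vert_{B^\nu_{q',2}}$.

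The main obstacle is the dispersive $L^1\to L^\infty$ bound with the sharp exponent $(2^j|t-s|)^{-m/2}$: the stationary phase analysis must be run uniformly over $x$, $y$ and the direction $\xi/|\xi|$ under only a rank (not a full--rank) hypothesis on $\partial_\xi^2\partial_t\phi$, which requires controlling how the $m$--dimensional nondegenerate subspace and the $O(|t-s|^2)$ error in the phase depend on these parameters, and keeping all constants uniform as $t\to s$ -- where the phase degenerates to $(x-y)\cdot\xi$ and there is no dispersion at all, so the estimate must, and does, blow up. The remainder is routine Littlewood--Paley bookkeeping together with the $L^2$--theory of Fourier integral operators and real interpolation.
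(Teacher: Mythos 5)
Your argument is correct and follows the standard route. The paper itself gives no proof of this lemma --- it only remarks that it is Lemma 3.1 of Kapitanski [12] adapted to a phase independent of $x$ --- and Kapitanski's proof is precisely the dyadic decomposition, the uniform $L^2$ bound plus the $L^1\to L^\infty$ kernel bound obtained by stationary phase under the rank-$m$ hypothesis on $\partial_\xi^2\partial_t\phi$, interpolation, and summation over dyadic blocks that you describe, so your proposal reconstructs essentially the same proof.
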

Notice that this lemma  is a generalization of Lemma 3.1  in [12], but the proof is the same since the phase $\phi$ does not depend on $x$.

In our case we know that
\[\partial_t(\phi^\pm(s,s,y,\xi))=\mp\sqrt{a(s,y)}\vert\nabla_y\phi^\pm(s,y,\xi)\vert,\]
and $\phi^\pm(s,s,y,\xi)=y.\xi$. Thus, $\partial_t\phi^\pm(s,s,y,\xi)=\mp\sqrt{a(s,y)}\vert\xi\vert$ and we obtain that
\[\partial_\xi^2\partial_t\phi^\pm(s,s,y,\xi)=\mp\sqrt{a(s,y)}\partial_\xi^2(\vert\xi\vert).\]
\begin{prop}
Let $n\geq3$ and let $g$ be a function defined on ${\R}^n$ by $g:\xi\longmapsto\vert\xi\vert$. Then

 \[rank(\partial_\xi^2(g))(\xi)=n-1,\quad \xi\neq0.\]
\end{prop}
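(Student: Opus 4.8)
The plan is to compute the Hessian $\partial_\xi^2 g$ explicitly for $\xi\neq0$ and to recognize it, up to a positive scalar factor, as the orthogonal projection onto the hyperplane $\xi^\perp$; the rank of that projection is visibly $n-1$.

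First I would record that $g(\xi)=|\xi|=(\xi_1^2+\cdots+\xi_n^2)^{1/2}$ is smooth away from the origin, with $\partial_{\xi_i}g(\xi)=\xi_i/|\xi|$. Differentiating once more gives
\[\partial_{\xi_i}\partial_{\xi_j}g(\xi)=\frac{\delta_{ij}}{|\xi|}-\frac{\xi_i\xi_j}{|\xi|^3}=\frac{1}{|\xi|}\left(\delta_{ij}-\frac{\xi_i\xi_j}{|\xi|^2}\right),\]
so that $\partial_\xi^2 g(\xi)=|\xi|^{-1}\,Q(\xi)$, where $Q(\xi):=I_n-|\xi|^{-2}\,\xi\,\xi^{T}$.

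Next I would check that $Q(\xi)$ is precisely the orthogonal projection of ${\R}^n$ onto $\xi^\perp$. Indeed $Q(\xi)$ is symmetric, and using $\xi^{T}\xi=|\xi|^2$ one gets $Q(\xi)\xi=0$ and $Q(\xi)v=v$ for every $v\perp\xi$; equivalently $Q(\xi)^2=Q(\xi)$. Hence $0$ is an eigenvalue of $Q(\xi)$ with one-dimensional eigenspace ${\R}\xi$, while $1$ is an eigenvalue with eigenspace $\xi^\perp$ of dimension $n-1$, so $\textrm{rank}\,Q(\xi)=n-1$. Since multiplication by the positive scalar $|\xi|^{-1}$ does not change the rank, it follows that $\textrm{rank}(\partial_\xi^2 g)(\xi)=n-1$ for every $\xi\neq0$, which is the assertion.

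There is no genuine obstacle in this argument; the only step needing a word of justification is the idempotency of $Q(\xi)$ — equivalently, the explicit description of its kernel ${\R}\xi$ and its image $\xi^\perp$ — and this is immediate from $\xi^{T}\xi=|\xi|^2$. I would also remark that the hypothesis $n\geq3$ is not actually used in the statement: it merely guarantees $n-1\geq2$, which is what matters when this rank is fed into Kapitanski's Lemma (Lemma~7) in the proof of Theorem~5.
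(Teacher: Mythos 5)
Your proof is correct: the explicit Hessian computation $\partial_{\xi_i}\partial_{\xi_j}|\xi| = |\xi|^{-1}(\delta_{ij}-\xi_i\xi_j/|\xi|^2)$, identified as $|\xi|^{-1}$ times the orthogonal projection onto $\xi^{\perp}$, immediately gives rank $n-1$. The paper states Proposition~14 without proof, so there is no divergence to report; your argument is exactly the standard justification one would supply, and your closing remark that $n\geq3$ only matters downstream (to make $m=n-1$ large enough for Lemma~7) is accurate.
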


 It follows from Proposition 14 that for $\tilde{I}_j(t,s)$ we have $m=n-1$. 
\begin{prop}
Consider $2\leq q<\infty$ and $\frac{(n+1)(q-2)}{2q}<\nu<\frac{n(q-2)}{q}$ and a cut-off function $\psi\in{\CI}$. Then there exists $\delta>0$ such that for all
 $s,t\in[0,T]$, $0<t-s<\delta$ we have
\[\Vert \psi U(t,s)f\Vert_{B^0_{q,2}({\R}^n)}\leq C\vert t-s\vert^{\nu-\frac{n(q-2)}{q}}\Vert f\Vert_{B^\nu_{q',2}({\R}^n)},\]
\[\Vert \psi V(t,s)g\Vert_{B^0_{q,2}({\R}^n)}\leq C\vert t-s\vert^{\nu-\frac{n(q-2)}{q}}\left\Vert \frac{g}{\Lambda}\right\Vert_{B^\nu_{q',2}({\R}^n)},\]
with $C>0$  independent on $s$, $t$ and $f$.
\end{prop}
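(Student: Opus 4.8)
The plan is to deduce Proposition~15 directly from the Fourier integral operator representation of Theorem~6 together with Kapitanski's estimate (Lemma~7) and the mapping properties of smoothing operators on Besov spaces (Proposition~13). First I would fix $\delta>0$ as in Theorem~6 so that, for $s,t\in[0,T]$ with $0<t-s<\delta$, we have the decomposition $\psi U(t,s)=\sum_{j=1}^M(\tilde I_j^+(t,s)+\tilde I_j^-(t,s))+R_N(t,s)$, with the $\tilde I_j^\pm$ Fourier integral operators whose kernels are of exactly the form required by Lemma~7 (amplitude in $S^0_{1,0}$, compactly supported in the appropriate spatial variable, vanishing for small $|\xi|$, and phase $\phi_j^\pm$ homogeneous of degree $1$ in $\xi$ with $\phi_j^\pm(s,s,y,\xi)=y\cdot\xi$), and $R_N(t,s)\in B^{-N}$ depending admissibly on $(t,s)$.

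Next I would compute the rank appearing in Lemma~7 for each phase. Since $\partial_t\phi_j^\pm(s,s,y,\xi)=\mp\sqrt{a(s,y)}\,|\xi|$, we get $\partial_\xi^2\partial_t\phi_j^\pm(s,s,y,\xi)=\mp\sqrt{a(s,y)}\,\partial_\xi^2(|\xi|)$, and by Proposition~14 this matrix has rank $n-1$ for $\xi\neq0$; since the amplitudes vanish for small $|\xi|$ and are compactly supported in $y$, and $a$ is bounded below by $c_0>0$, we obtain $m=n-1$ uniformly. Feeding $m=n-1$ into the admissible range of Lemma~7 gives the condition $(n-\tfrac{n-1}{2})\tfrac{q-2}{q}<\nu<\tfrac{n(q-2)}{q}$, i.e. $\tfrac{(n+1)(q-2)}{2q}<\nu<\tfrac{n(q-2)}{q}$, which is precisely the hypothesis of Proposition~15. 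Thus Lemma~7 yields, for $\delta$ possibly shrunk further and uniformly in $s,t$, $\|\tilde I_j^\pm(t,s)f\|_{B^0_{q,2}}\le C|t-s|^{\nu-\frac{n(q-2)}{q}}\|f\|_{B^\nu_{q',2}}$.

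For the remainder term $R_N(t,s)\in B^{-N}$, I would invoke Proposition~13: choosing $N$ large enough (so that $B^{-N}\subset\mathcal L(B^\nu_{q',2},B^0_{q,2})$ and in fact the smoothing gains the weight), one has $\|R_N(t,s)f\|_{B^0_{q,2}}\le C\|f\|_{B^\nu_{q',2}}$ with $C$ uniform in $(t,s)$ by the admissible dependence; since $|t-s|<\delta$ and the exponent $\nu-\tfrac{n(q-2)}{q}$ is negative, $|t-s|^{\nu-\frac{n(q-2)}{q}}\ge \delta^{\nu-\frac{n(q-2)}{q}}$ is bounded below, so the remainder is absorbed into the same power of $|t-s|$. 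Summing over the finitely many $j$ and both signs gives the first estimate of Proposition~15. For the second estimate, I would apply the representation of Theorem~6 to $\psi V(t,s)$ rather than $\psi U(t,s)$; since $V(t,s)g=(\mathcal U(t,s)(0,g))_1$, and since $V(t,s)=U(t,s)\Lambda^{-1}\cdot(\text{something})$ heuristically, the cleaner route is to note that Theorem~6 already provides an analogous Fourier integral representation for $\psi V(t,s)$ directly, and then to use the identity $\psi V(t,s)g=\psi U(t,s)(\Lambda^{-1}\tilde g)$-type bookkeeping or, more simply, to run the same Lemma~7/Proposition~13 argument on the kernels of $\psi V(t,s)$ and observe that the natural data norm that comes out is $\|\Lambda^{-1}g\|_{B^\nu_{q',2}}=\|g/\Lambda\|_{B^\nu_{q',2}}$, reflecting the one-order difference in homogeneity between the $f$-data and $g$-data slots of $\mathcal U$.

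The main obstacle is the uniformity in $s,t\in[0,T]$: Lemma~7 is stated for $|t-s|$ sufficiently small with a constant that a priori could depend on the base point, so I must use the uniform boundedness (independently of $t_0,x_0,\eta_0$) of the phases $\phi_j^\pm$, the amplitudes $b_j^\pm$, $c_j^\pm$ and all their derivatives — which Theorem~6 and Propositions~9--10 were carefully set up to guarantee — to conclude that a single $\delta>0$ and a single constant $C$ work for all $s,t\in[0,T]$. A secondary technical point is checking that the rank $m=n-1$ is attained uniformly on the (compact, away from $\xi=0$) support of the amplitudes and that the lower bound $a\ge c_0>0$ prevents degeneration of $\sqrt{a(s,y)}\,\partial_\xi^2(|\xi|)$; this follows from condition (i) of (1.2) and Proposition~14 but should be stated explicitly.
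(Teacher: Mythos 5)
Your proposal is correct and follows essentially the same route as the paper: the paper likewise invokes the uniform Fourier integral representation of Theorem~6, computes $\partial_\xi^2\partial_t\phi^\pm(s,s,y,\xi)=\mp\sqrt{a(s,y)}\,\partial_\xi^2(|\xi|)$ so that Proposition~14 gives $m=n-1$ in Lemma~7, and then cites Kapitanski's argument with the uniformity in $s,t$ supplied by Theorem~6. Your explicit handling of the remainder $R_N(t,s)$ via Proposition~13 and the negativity of the exponent is a detail the paper leaves implicit, but it is consistent with its argument.
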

Kapitanski established  the result of Proposition 15 for $s=0$, in Theorem 1 of [12],  by applying Lemma 7  to the representation of the propagator with Fourier integral operators in a small neighborhood of $t=0$. In Theorem 6 we have shown that we can represent  $\psi U(t,s)$ and $\psi V(t,s)$ with a sum of Fourier integral operators with amplitude and phase uniformly bounded independently of $s,t\in[0,T]$, and a sufficiently regular operator bounded independently of $s,t\in[0,T]$. With this  argument we can apply the result of Kapitanski to obtain Proposition 15.

\begin{Thm}
 Let  $\psi\in{\CI}$. Then  for  $2\leq p,q<+\infty$ and $\gamma>0$ satisfying
\begin{equation}\label{eq:1}\frac{n(q-2)}{2q}-\gamma=\frac{1}{p}\leq\frac{(n-1)(q-2)}{4q},\end{equation}
there exists $\delta>0$ such that for all $s\in[0,T]$
 \[\int_s^{s+\delta} \Vert\psi (\mathcal{U}(t,s)f)_1\Vert_{B^0_{q,2}({\R}^n)}^p\textrm{d}t\leq C(T,\phi,p,q,n)\Vert f\Vert_{\dot{\mathcal{H}}_\gamma}^p\]
with $C>0$ independent on $s$, $f$.
\end{Thm}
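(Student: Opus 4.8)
\emph{Proof plan.} The strategy is to upgrade the fixed--time dispersive estimates of Proposition~15 to a space--time estimate by the $TT^{*}$/duality mechanism of Kapitanski~[12], the only point beyond Kapitanski's work being that every constant stays uniform in $s\in[0,T]$, which is precisely what Theorem~6 was built to ensure. First I would write $f=(f_{1},f_{2})$ and $(\mathcal{U}(t,s)f)_{1}=U(t,s)f_{1}+V(t,s)f_{2}$, so that it suffices to prove, uniformly in $s\in[0,T]$,
\[\Vert\psi U(\cdot,s)f_{1}\Vert_{L^{p}([s,s+\delta],B^{0}_{q,2})}\leq C\Vert f_{1}\Vert_{\dot{H}^{\gamma}},\]
together with the analogous bound for $\psi V(\cdot,s)f_{2}$ in terms of $\Vert f_{2}\Vert_{\dot{H}^{\gamma-1}}$; the $V$--bound has the same structure as the $U$--bound once one absorbs the factor $\Lambda^{-1}$ already carried by the $V$--version of Proposition~15, so I shall discuss only the $U$--bound. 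Observe that one cannot simply raise the pointwise bound of Proposition~15 to the power $p$ and integrate in $t$: its data space $B^{\nu}_{q',2}$ with $\nu>0$ is not controlled by $\dot{H}^{\gamma}$ (the relevant Besov--Sobolev embedding runs the wrong way when $q'\leq2$), so the time integration is genuinely used, and a $TT^{*}$ argument is needed.

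For the $U$--bound, set $T_{s}\colon L^{2}(\mathbb{R}^{n})\to L^{p}([s,s+\delta],B^{0}_{q,2})$, $T_{s}h=\psi U(\cdot,s)\Lambda^{-\gamma}h$, so that the claim is $\Vert T_{s}\Vert\leq C$ with $C$ independent of $s$. By Proposition~11 the dual of $B^{0}_{q,2}$ is $B^{0}_{q',2}$, and $T_{s}T_{s}^{*}$ acts on $L^{p'}([s,s+\delta],B^{0}_{q',2})$ with kernel $K(t,t')=\psi U(t,s)\,\Lambda^{-2\gamma}\,(\psi U(t',s))^{*}$ (for the companion $V$--estimate one also invokes the adjoint identities of Lemma~5). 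The heart of the matter is the kernel bound
\[\Vert K(t,t')\Vert_{\mathcal{L}(B^{0}_{q',2},\,B^{0}_{q,2})}\leq C\,\vert t-t'\vert^{-2/p},\qquad t\neq t',\quad t,t'\in[s,s+\delta],\]
with $C$ independent of $s$. Granting it, $2/p<1$ (i.e.\ $p>2$) and the Hardy--Littlewood--Sobolev inequality in the single time variable give $\Vert T_{s}T_{s}^{*}\Vert\leq C$, hence $\Vert T_{s}\Vert\leq C$. The admissibility hypothesis $\tfrac1p=\tfrac{n(q-2)}{2q}-\gamma\leq\tfrac{(n-1)(q-2)}{4q}$ is precisely what makes the decay exponent in the kernel bound equal $2/p$ and keeps it at or below the maximal wave dispersion rate $(n-1)(\tfrac12-\tfrac1q)$.

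To prove the kernel bound I would perform a Littlewood--Paley decomposition and, on each dyadic frequency block, interpolate the rescaled dispersive estimate of Proposition~15 --- applied to $\psi U(t,s)$ directly and, via Proposition~11, to $(\psi U(t',s))^{*}$ --- against the uniform energy bound of Lemma~6 ($U(t,s),V(t,s)\in B^{0}$ with norms bounded for $t,s\in[0,T]$); the factor $\Lambda^{-2\gamma}$ is what absorbs the regularity loss $\nu$ of Proposition~15 on the two sides, a step which cannot be carried out by a single Besov--Sobolev embedding because $q\geq2\geq q'$. Reassembling the blocks through the $\ell^{2}$ structure of $B^{0}_{q,2}$ produces the kernel bound with a constant depending only on the constants of Theorem~6, Proposition~15 and Lemma~6, all of which are uniform for $s\in[0,T]$ --- this uniformity, which Kapitanski's original treatment of $s=0$ does not address, is exactly what the FIO representation of Theorem~6 with $s$--uniform phases and amplitudes was set up to supply --- and it is therefore inherited by $T_{s}$ and by the final estimate. (The extension to all $s\geq0$ by periodicity, as in the proof of Lemma~3, is not needed here.)

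The step I expect to be the real obstacle is this frequency--localised interpolation: reconstructing Kapitanski's bookkeeping so that the block--wise time decay comes out exactly $2/p$ and the dyadic sum converges, and handling with care the adjoints of the non--self--adjoint propagators $U(t,s),V(t,s)$ through Lemma~5 and Proposition~11. A secondary, purely technical point is the endpoint $p=2$, where Hardy--Littlewood--Sobolev degenerates and one argues instead via Lorentz spaces (or via the Besov $\ell^{2}$--summation of~[12]); this causes no harm, since $p=2$ is compatible with $\tfrac1p\leq\tfrac{(n-1)(q-2)}{4q}$ only for $q$ large (and never when $n=3$), in agreement with the statement, while for $p>2$ the argument above already suffices.
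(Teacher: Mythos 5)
Your proposal is correct and follows essentially the same route as the paper, which proves Theorem 7 by invoking Proposition 15 and repeating Kapitanski's proof of Theorem 2 in [12] (the dispersive-estimate-to-Strichartz duality/$TT^{*}$ mechanism with Hardy--Littlewood--Sobolev and the Besov $\ell^{2}$ summation), the only new point being the $s$-uniformity of the constants supplied by Theorem 6. You have correctly identified both the mechanism and the role of the uniform FIO parametrix, so your reconstruction matches the intended argument.
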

Applying Proposition 15, the proof of  Theorem 7 is similar to the proof  of Theorem 2 in [12], but we must replace $U(t,s)$ by $\psi U(t,s)$ and change the definition given for $\Lambda$ by Kapitanski. It follows the same for $\psi V(t,s)$.
Theorem 5 follows directly from Theorem 7 and  Proposition 13.\\
Notice that we can establish, with the same method, a local Strichartz estimates without assuming that  $a(t,x)$ is periodic and $n\geq3$ is odd. Indeed, we obtain the following
\begin{cor}
 Assume $n\geq3$ and  $a(t,x)$ is a $C^\infty$ function on ${\R}^{n+1}$ satisfying conditions (i) and (ii) of (1.2). Let $2\leq p,q<+\infty$, $\gamma>0$ be such that
\begin{equation}\label{eq:1}\frac{1}{p}=\frac{n(q-2)}{2}-\gamma,\quad\frac{1}{p}\leq\frac{(n-1)(q-2)}{4q}.\end{equation}
Then   for all $u$ solution of (1.1) with $\tau=0$ we have
\[\Vert u\Vert_{L^p([0,\delta],L^q({\R}^n))}+ \Vert u(t)\Vert_{\mathcal{C}([0,\delta],\dot{H}^\gamma)}+\Vert \partial_t(u)(t)\Vert_{\mathcal{C}([0,\delta],\dot{H}^{\gamma-1})}\leq C(p,q,\rho,n)\Vert f\Vert_{\dot{\mathcal{H}}_\gamma}.\]
\end{cor}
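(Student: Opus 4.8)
\emph{Sketch of proof.} The plan is to run the argument of Sections 4 and 5 on the single time window $[0,\delta]$, the global ingredients of Sections 2--3 being replaced by the elementary finite-time energy inequality. The one point that really needs checking is that the local Strichartz estimate of Theorem 5 --- and hence Theorem 6, Propositions 8, 9, 10, 14, 15 and Theorem 7 on which it rests --- uses \emph{only} conditions (i) and (ii) of (1.2): the Hamiltonian-flow analysis of Proposition 8 needs the ellipticity bounds of (i) and the boundedness of $a$, $\nabla_x a$ and $a_t$, all consequences of (i)--(ii) and smoothness; the BKW construction of Proposition 9 uses only that $P=\partial_t^2-\Div_x(a\nabla_x)$ is strictly hyperbolic; and the Fourier integral operator representation, the rank computation of Proposition 14 and Kapitanski's estimates are insensitive to the parity of $n$, to the periodicity of $a$ and to (H1)--(H2). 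So Theorem 5 holds verbatim for every $n\geq3$ and every $a$ satisfying (i)--(ii): one just replaces $[0,T]$ by a fixed interval (say $[0,1]$) in the covering argument of Proposition 10 that produces $\delta$, so that for $\psi\in\CI$ and $2\leq p,q<\infty$, $\gamma>0$ as in the statement one has $\int_s^{s+\delta}\Vert\psi(\mathcal U(t,s)h)_1\Vert_{L^q(\R^n)}^p\,\textrm{d}t\leq C\Vert h\Vert_{\dot{\mathcal H}_\gamma}^p$ for all admissible $s$.

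I would then record the finite-time energy bound $\sup_{t\in[0,\delta]}\Vert(u,\partial_t u)(t)\Vert_{\dot{\mathcal H}_\gamma}\leq C\Vert f\Vert_{\dot{\mathcal H}_\gamma}$ --- the energy identity when $\gamma=1$ (using $a_t$ bounded), and in general a consequence of Lemma 6 together with the comparison of homogeneous and inhomogeneous Sobolev norms on compact sets recalled at the start of Section 3 (note $\gamma<n/2$ here, since $\frac1p=n(\frac12-\frac1q)-\gamma>0$) --- which already controls the $\mathcal C([0,\delta],\dot H^\gamma)$ and $\mathcal C([0,\delta],\dot H^{\gamma-1})$ terms of the conclusion. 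For the $L^pL^q$ term I would split $u=(1-\chi)u+\chi u$ with $\chi\in\CI$, $\chi=1$ on $B_{\rho+1/2}$ and $\textrm{supp}\,\chi\subset B_{\rho+1}$, exactly as at the end of Section 3. Since $a=1$ on $\textrm{supp}\,\nabla\chi$, $(1-\chi)u$ solves the free wave equation with data $(1-\chi)f$ and source $[\Delta_x,\chi]u$ of fixed compact support; the hypothesis on $(p,q,\gamma)$ is precisely the admissibility condition (1.3) for the free wave equation, the forbidden endpoint being excluded by $q<\infty$, so I would apply the global estimate (1.2) to the data terms and Minkowski's inequality together with (1.2) to the Duhamel term, obtaining $\Vert(1-\chi)u\Vert_{L^p([0,\delta],L^q)}\leq C\Vert f\Vert_{\dot{\mathcal H}_\gamma}+C\Vert[\Delta_x,\chi]u\Vert_{L^1([0,\delta],\dot H^{\gamma-1})}$; and since $[\Delta_x,\chi]u(s)$ is a first-order compactly supported expression in $u$, its $\dot H^{\gamma-1}$-norm is $\leq C\Vert u(s)\Vert_{\dot H^\gamma}$ (multiplication by the $\CI$ functions $\nabla\chi,\Delta\chi$ is bounded on $\dot H^{\gamma-1}$ since $|\gamma-1|<n/2$, and a compactly supported element of $\dot H^\gamma$ lies in $\dot H^{\gamma-1}$ with comparable norm), so the last term is $\leq C\delta\sup_{[0,\delta]}\Vert u(s)\Vert_{\dot H^\gamma}\leq C\Vert f\Vert_{\dot{\mathcal H}_\gamma}$ by the energy bound.

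For $\chi u$ --- which solves $\partial_t^2(\chi u)-\Div_x(a\nabla_x(\chi u))=-[\Div_x(a\nabla_x),\chi]u=:g$ with data $\chi f$, where $g$ is again first order and of fixed compact support --- I would pick $\beta\in\CI$ equal to $1$ on $\textrm{supp}\,\chi$, write $\chi u=\beta(\chi u)$, and use Duhamel's formula to get $\chi u(t)=(\beta\,\mathcal U(t,0)\chi f)_1+\int_0^t(\beta\,\mathcal U(t,s)(0,g(s)))_1\,\textrm{d}s$. Theorem 5 (with $\psi=\beta$, $h=\chi f$) bounds the first term in $L^p([0,\delta],L^q)$ by $C\Vert f\Vert_{\dot{\mathcal H}_\gamma}$, and the inhomogeneous version of Theorem 5 --- obtained exactly as in the proof of Proposition 6, by Minkowski's inequality and Theorem 5 applied to each $(0,g(s))\in\dot{\mathcal H}_\gamma$, but reading the source in $L^1([0,\delta],\dot H^{\gamma-1})$ rather than in $L^1_tL^2_x$ --- bounds the Duhamel term by $C\Vert g\Vert_{L^1([0,\delta],\dot H^{\gamma-1})}\leq C\Vert f\Vert_{\dot{\mathcal H}_\gamma}$, as above. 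Adding the two pieces completes the estimate. The step I expect to be the main obstacle is the first one: carefully verifying that Theorem 5 and all of its ingredients are genuinely independent of periodicity, of the parity of $n$ and of (H1)--(H2). Once that is granted the rest is strictly easier than the proof of Theorem 1, since on a single window the exponential decay of Section 2 and the $L^2$-integrability of Section 3 are not needed and --- the sources being integrable in time --- no Christ--Kiselev argument is required.
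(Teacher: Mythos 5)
Your proof is correct and follows the same overall strategy as the paper: first check that Theorem 5 (and the chain Theorem 6, Propositions 8--10, 14, 15, Theorem 7 behind it) uses only conditions (i)--(ii) of (1.2) and nothing about periodicity, the parity of $n$, or (H1)--(H2) --- the paper makes exactly this remark just before the corollary --- and then split $u=(1-\chi)u+\chi u$, treating $(1-\chi)u$ by the free Strichartz estimates plus a Duhamel term whose source is integrable in time, so that Minkowski replaces Christ--Kiselev. The one place where you genuinely diverge is the term $\chi u$: you set up a second Duhamel formula for the perturbed equation with source $-[\Div_x(a\nabla_x),\chi]u$ and appeal to an inhomogeneous version of Theorem 5, whereas the paper simply observes that $\chi u(t)=\chi\,(\mathcal{U}(t,0)f)_1$ is exactly the quantity Theorem 5 controls (take $\psi=\chi$, $s=0$), so this piece is done in one line, with no source term and no energy bound needed at that stage. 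Your detour is not wrong, but it is superfluous and makes the finite-time bound $\sup_{t\in[0,\delta]}\Vert (u,u_t)(t)\Vert_{\dot{\mathcal H}_\gamma}\leq C\Vert f\Vert_{\dot{\mathcal H}_\gamma}$ load-bearing in two places instead of one; that bound is in any case required for the commutator source $[\Delta_x,\chi]u$ in the $(1-\chi)u$ piece, and there your treatment (Minkowski's inequality, the free estimate for data $(0,h)$ with $h\in\dot H^{\gamma-1}$, and the multiplication and embedding facts valid for $0<\gamma<n/2$) supplies precisely the details that the paper compresses into the phrase ``the local Strichartz estimates for the free wave equation and the continuity with respect to $t$''.
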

\begin{proof}
Theorem 5 implies that
\[\Vert\chi u\Vert_{L^p([0,\delta],L^q)}\leq C\Vert f\Vert_{\dot{\mathcal{H}}_\gamma}.\]
Also, applying the local Strichartz estimates for the free wave equation (see Section 3) and the continuity with respect to $t$, we get
\[\Vert (1-\chi)u\Vert_{L^p([0,\delta],L^q)}\leq C(\delta)\Vert f\Vert_{\dot{\mathcal{H}}_\gamma}.\]
It follows that
\[\Vert  u\Vert_{L^p([0,\delta],L^q({\R}^n))}\leq C\Vert f\Vert_{\dot{\mathcal{H}}_\gamma}.\]
\end{proof}

\section{ Examples of Non-trapping Metrics $a(t,x)$ Satisfying Conditions (H1) and (H2)}
\renewcommand{\theequation}{\arabic{section}.\arabic{equation}}
\setcounter{equation}{0}
In this section we will give some examples of metrics $a(t,x)$. We start with a class of metrics  non-trapping $a(t,x)$.

\subsection{ Examples of non-trapping metric $a(t,x)$}

Consider $a(t,x)$ satisfies
\begin{equation}\label{eq:hyp1}
\frac{2a}{\rho} -\frac{\vert a_t\vert}{\sqrt{\inf a}}-\vert a_r\vert\geq\beta>0.
\end{equation}
We will show that if condition (8.1) is fulfilled, then $a(t,x)$ is a non-trapping metric.
We recall that the  bicharacteristics  $(t(\sigma),x(\sigma),\tau(\sigma),\xi(\sigma))$ of $\partial_t^2u-\Div_x(a\nabla_x u)=0$  are solutions of
\[\left\{\begin{array}{l}
\displaystyle\frac{\partial t}{\partial \sigma}=2\tau,\ 
\frac{\partial x}{\partial \sigma}=-2a(t,x)\xi,\\
\ \\
\displaystyle\frac{\partial \tau}{\partial \sigma}=a_t(t,x)\vert\xi\vert^2,\ 
\frac{\partial \xi}{\partial \sigma}=\vert\xi\vert^2\nabla_xa(t,x),\\
\ \\
   (x(0),t(0),\tau(0),\xi(0))=(x_0,t_0,\tau_0,\xi_0), \\     \end{array}\right.\]
with $H(t_0,x_0,\tau_0,\xi_0)=0$. We take $\vert x_0\vert\leq\rho$ and $\xi_0\neq0$.
In Proposition 8 we have established that  $s\longmapsto t(\sigma)$ is a diffeomorphism of ${\R}$ and $\tau(\sigma)=\pm\sqrt{a(t(\sigma),x(\sigma))}$. We find
\[\frac{\partial \vert x\vert^2}{\partial t}=\frac{\frac{\partial \vert x\vert^2}{\partial s}}{\frac{\partial t}{\partial s}}=-2a(t,x)\frac{\xi.x}{\tau}.\]
Also
\[\begin{array}{lll}\frac{\partial \left(\frac{\xi.x}{\tau}\right)}{\partial t}&=&\frac{\vert\xi\vert^2\nabla a.x}{2\tau^2}-\frac{a\vert\xi\vert^2}{\tau^2}+\frac{\xi.x}{2\tau}\left(-\frac{a_t\vert\xi\vert^2}{\tau^2}\right)\\
  \  &=&\frac{a_r}{2a}\vert x\vert-1-\frac{a_t}{2a}\frac{\xi.x}{\tau}.\\
  \end{array}\]
Since $a(t,x)=1$ for $\vert x\vert>\rho$ we have $a_t(t,x)=a_r(t,x)=0$ for $\vert x\vert>\rho$. It follows that
\[\vert a_t(t,x)\vert\vert x\vert\leq \vert a_t(t,x)\vert\rho,\quad\vert a_r(t,x)\vert\vert x\vert\leq \vert a_r(t,x)\vert\rho,\quad (t,x)\in{\R}^{n+1}\]
and
\[\frac{\partial \left(\frac{\xi.x}{\tau}\right)}{\partial t}\leq \frac{\vert a_r\vert}{2a}\rho+\frac{\vert a_t\vert}{2a\vert\tau\vert}\vert\xi\vert\rho-1.\]
Condition (8.1) implies 
\[\frac{\partial \left(\frac{\xi.x}{\tau}\right)}{\partial t}\leq-\frac{\rho}{2a}\left(\frac{2a}{\rho}-\frac{\vert a_t\vert}{\sqrt{a}}-\vert a_r\vert\right)\leq  -\left(\frac{\rho}{2a}\left[\frac{2a}{\rho}-\frac{\vert a_t\vert}{\sqrt{\inf a}}-\vert a_r\vert\right]\right),\]
and we obtain
\[\frac{\partial \left(\frac{\xi.x}{\tau}\right)}{\partial t}\leq -\frac{\rho}{2a}\beta\leq-\alpha<0.\]
Therefore, for $t>t_0$ we have 
\[ -2a\frac{x.\xi}{\tau}\geq -2a\frac{x_0.\xi_0}{\tau_0}+2a\alpha (t-t_0)\geq2\alpha C_0(t-t_0)-C_1\vert x_0\vert\]

We deduce
\[\vert x(t)\vert^2\geq \alpha C_0(t-t_0)^2-C_1\vert x_0\vert (t-t_0)+\vert x_0\vert^2\geq  \alpha C_0(t-t_0)^2-C_1\rho (t-t_0)- \rho^2.\]

Thus,  for all $R>\rho$, there exists $T_R>0$ which does not depend on   $(t_0,x_0,\tau_0,\xi_0)$  such that for $(t-t_0)>T_R$, $\vert x(t)\vert>R$.
Since $\frac{\textrm{d}t}{\textrm{d}s}=\tau$, by replacing $\frac{x.\xi}{\tau}$ with $\frac{x.\xi}{-\tau}$, we apply the same argument for $t<t_0$.
Thus, there exists $T_R>0$ which does not depend on  $(t_0,x_0,\tau_0,\xi_0)$  such that  for $\vert x_0\vert\leq\rho$ we have
 $\vert x(\sigma)\vert>R$ as $\vert t(\sigma)-t(0)\vert>T_R$. Consequently,  $a(t,x)$ is a non-trapping metric.

\subsection{Conditions for bounded global energy}
In this subsection we present some examples of metrics $a(t,x)$ such that if $u$ is the solution of (1.1)  for all $t>0$ we have  
\[\Vert (u,u_t)(t)\Vert_{\B}\leq C\Vert f\Vert_{\B},\]
with $C>0$ independent on  $t$,$f$ and $\tau$.
Let $\xi(r)$ be a $C^\infty$ function which depends only on the radius $r=\vert x\vert$ and satisfies the following conditions:
\begin{equation}\label{eq:hyp1}
 \xi''\leq0,\quad 0< r\xi'\leq\xi\leq \epsilon \inf a<\inf a,
\end{equation}
Moreover assume that $\xi$ and $a(t,x)$ satisfy
\begin{equation}\label{eq:hyp1}
 \xi'a-a_t-\xi a_r\geq 0
\end{equation}
and
\begin{equation}\label{eq:hyp1}
 \left(\frac{\xi}{r}-\xi'\right)\left(\frac{a(n-3)}{r}+a_r\right)-a\xi''\geq0.
\end{equation}

\begin{Thm}
Let $a(t,x)$ satisfy conditions (8.3) and (8.4). Then, there exists a constant $C\geq1$ which does not depend on $f,t,s$ such that for each $f\in{\B}$ and all $t\in{\R}$, $s\in{\R}$, $t\geq s$ we have
\[\Vert \mathcal{U}(t,s)f\Vert_{\B}\leq C\Vert f\Vert_{\B}.\]
 \end{Thm}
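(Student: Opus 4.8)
The plan is to prove the theorem by a Morawetz‑type multiplier argument, producing a \emph{modified energy} that is non‑increasing and uniformly equivalent to the usual one. By density it suffices to treat $f=(f_1,f_2)\in\CI\times\CI$, so that $u$ is smooth and decays in $x$ fast enough for all integrations by parts below to be legitimate with no boundary contributions; since the bound obtained is uniform, the general case follows by approximation. Fix $s$, let $t\geq s$ and write $(u,u_t)(t)=\U(t,s)f$. First I would record the two basic identities. With $E(t)=\half\int_{\R^n}(|u_t|^2+a(t,x)|\nabla_x u|^2)\,dx$, testing $u_{tt}-\Div_x(a\nabla_x u)=0$ against $u_t$ gives $E'(t)=\half\int_{\R^n}a_t(t,x)\,|\nabla_x u|^2\,dx$, which is not sign definite (this is exactly why the bare energy need not be bounded). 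Then, with $\xi=\xi(r)$ the function of condition (8.2), $r=|x|$, $\partial_r u=\frac{x}{r}\cdot\nabla_x u$, and $|\nabla_x u|^2=(\partial_r u)^2+|\nabla_\omega u|^2$ the radial/tangential splitting, I introduce the first‑order radial multiplier
\[\mathcal N u=\xi\,\partial_r u+\frac{n-1}{2}\,\frac{\xi}{r}\,u,\]
test the equation against it, integrate over $\R^n$, and integrate by parts in $x$ (using $u_{tt}\mathcal N u=\partial_t(u_t\mathcal N u)-u_t\mathcal N u_t$ together with $\int u_t\,\mathcal N u_t\,dx=-\half\int\xi'\,u_t^2\,dx$ and one further integration by parts of the term $-\int ac'\,u\,\partial_r u\,dx$ coming from the $u$–component of $\mathcal N u$, where $c=\tfrac{n-1}{2}\tfrac{\xi}{r}$).

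The next step is to set $\mathcal E(t)=E(t)+\int_{\R^n}u_t\,\mathcal N u\,dx$ and add the two identities. After the bookkeeping one is led to
\[\mathcal E'(t)=\int_{\R^n}\bigl(A_1\,(\partial_r u)^2+A_2\,|\nabla_\omega u|^2+A_3\,u^2+A_4\,u_t^2\bigr)\,dx,\]
\[A_1=-\tfrac12\bigl(\xi'a-a_t-\xi a_r\bigr),\quad A_4=-\tfrac12\xi',\quad A_3=-\tfrac{n-1}{4r}\bigl[(\tfrac{\xi}{r}-\xi')(\tfrac{a(n-3)}{r}+a_r)-a\xi''\bigr],\quad A_2=\tfrac12(a_t+\xi a_r+a\xi')-\tfrac{a\xi}{r},\]
where the form of $A_3$ rests on the identity $(ac')'+\tfrac{n-1}{r}ac'=-\tfrac{n-1}{2r}\bigl[(\tfrac{\xi}{r}-\xi')(\tfrac{a(n-3)}{r}+a_r)-a\xi''\bigr]$. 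Now all four coefficients are $\leq0$: $A_1\leq0$ by (8.3); $A_3\leq0$ by (8.4); $A_4\leq0$ because $\xi'>0$ by (8.2); and $A_2\leq a\xi'-\tfrac{a\xi}{r}=-a(\tfrac{\xi}{r}-\xi')\leq0$, using (8.3) in the form $a_t+\xi a_r\leq a\xi'$ and (8.2) in the form $\xi'\leq\tfrac{\xi}{r}$. Hence $\mathcal E'(t)\leq0$ for $t\geq s$.

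It then remains to show $\mathcal E(t)\sim E(t)$ uniformly. Splitting $\int u_t\,\mathcal N u\,dx=\int\xi\,u_t\,\partial_r u\,dx+\tfrac{n-1}{2}\int\tfrac{\xi}{r}\,u_t\,u\,dx$, the first integral is $\leq\tfrac{\epsilon\inf a}{2}\int(|u_t|^2+|\nabla_x u|^2)\,dx$ by $0<\xi\leq\epsilon\inf a$ from (8.2); for the second I would use Cauchy–Schwarz with a free weight and Hardy's inequality $\int\tfrac{u^2}{r^2}\,dx\leq\tfrac{4}{(n-2)^2}\int|\nabla_x u|^2\,dx$ (valid since $n\geq3$), obtaining, provided $\epsilon$ in (8.2) is small enough (in terms of $n$ and the ellipticity constants $c_0,C_0$ of (1.2)), the bound $\bigl|\int u_t\,\mathcal N u\,dx\bigr|\leq\tfrac12 E(t)$, so $\tfrac12 E(t)\leq\mathcal E(t)\leq\tfrac32 E(t)$. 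Combining with $\mathcal E'\leq0$ gives $E(t)\leq2\mathcal E(t)\leq2\mathcal E(s)\leq3E(s)$ for $t\geq s$; since $c_0\leq a\leq C_0$ we have $E(\cdot)\sim\|\U(\cdot,s)f\|_{\B}^2$ uniformly, whence $\|\U(t,s)f\|_{\B}\leq C\|f\|_{\B}$ with $C$ independent of $f,t,s$, first for smooth compactly supported $f$ and then, by density, for all $f\in\B$.

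The step I expect to be the main obstacle is the computation in the second paragraph: one has to carry the multiplier identity far enough to recognize that the coefficients $A_1$ and $A_3$ are, up to the harmless positive factors $\tfrac12$ and $\tfrac{n-1}{4r}$, exactly the left‑hand sides of (8.3) and (8.4) — in particular the zeroth‑order contributions generated by the $u$–part of $\mathcal N u$ must be shown to assemble precisely into the combination in (8.4), the factor $\tfrac{n-3}{r}$ there being the fingerprint of dimension $n$. The secondary delicate point is the equivalence $\mathcal E\sim E$, which is where both $n\geq3$ (through Hardy) and the smallness of $\epsilon$ in (8.2) are used.
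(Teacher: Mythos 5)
Your multiplier is exactly the one the paper uses: your $\mathcal N u$ is the spatial part of $M_\xi(u)=u_t+\xi u_r+\frac{\xi(n-1)}{2r}u$ from Lemma 8, your modified energy $\mathcal E(t)=E(t)+\int u_t\,\mathcal N u\,dx$ is the paper's $\int_{\R^n}X\,dx$, and your coefficients $A_1,\dots,A_4$ reassemble (up to sign) into the paper's $Z$, so the monotonicity step is the same argument. Two points, however, fall short of the paper's proof.

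First, the equivalence $\mathcal E\sim E$. You control $\frac{n-1}{2}\int\frac{\xi}{r}u_tu\,dx$ by Cauchy--Schwarz and Hardy, which forces you to assume $\epsilon$ in (8.2) is small in terms of $n$ and the ellipticity constants. Condition (8.2) only requires $\xi\leq\epsilon\inf a<\inf a$, i.e.\ any $\epsilon<1$, and the theorem is asserted under that hypothesis. The paper avoids Hardy entirely: it rewrites $X$ as $(1\mp\frac{\xi}{\inf a})e(u)$ plus the manifestly non-negative terms $\xi u^2\frac{(n-1)(n-3)}{8r^2}+\xi'u^2\frac{n-1}{4r}$, plus an exact divergence, plus $\xi$ times a bracket which is shown pointwise non-negative by completing the square in $u_t(u_r+\frac{n-1}{2r}u)$; this yields $\frac{1-\epsilon}{2}cE(t)\leq\int X(t)\,dx$ and $\int X(s)\,dx\leq\frac{1+\epsilon}{2}CE(s)$ for every $\epsilon<1$. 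So your argument proves a strictly weaker statement unless you replace the Hardy step by this rearrangement.

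Second, for $n=3$ your claim that all integrations by parts produce no boundary contributions is false at the origin: since $\xi(0)>0$ is allowed, the zeroth-order part of the multiplier behaves like $\xi(0)u/r$ and the flux term $Y$ (equivalently, your integration by parts of $-\int ac'u\,\partial_ru\,dx$ with $c=\frac{n-1}{2}\frac{\xi}{r}$) leaves a nonzero contribution at $r=0$. The paper computes it explicitly as $\frac{(n-1)\pi}{2}\xi(0)u(t,0)^2\geq0$, which fortunately has the favorable sign and preserves $\mathcal E'\leq0$; your write-up needs this check, since an unfavorable sign here would destroy the monotonicity.
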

Let $\xi(r)$ be a $C^\infty$ function which depends only on the radius $r=\vert x\vert$ and satisfies conditions (8.2) and let $a(t,x)$ satisfy conditions (8.3) and (8.4). Set
\[e(u)=\half(a\vert\nabla u\vert^2+u_t^2),\ \ M_\xi(u)=u_t+\xi u_r+\frac{\xi(n-1)}{2r}u,\]
where 
\[\frac{\partial}{\partial r}=\sum_{i=1}^{n}\frac{x_i}{r}\frac{\partial}{\partial x_i},\]
\begin{lem}
Let $u(t,x)\in C^2({\R}^{n+1})$. Then 
\begin{equation} \label{eq:2}
 M_\xi(u)(\partial_t^2 u-\Div_x(a\nabla_x u))=\frac{\partial X}{\partial t}+ \Div_x Y+Z,
\end{equation}
where
\[X=e(u)+\xi u_t(u_r+\frac{n-1}{r}u),\]
\[Y=-a\nabla u M_\xi(u) +\frac{x\xi}{2r}(a\vert\nabla u\vert^2-u_t^2)+\frac{n-1}{4r^2}(\xi'-\frac{\xi}{r})xu^2,\]
\[\begin{array}{lll}Z&=& \half \xi'u_t^2+a(\frac{\xi}{r}-\xi')(\vert\nabla u\vert^2-u_r^2)+\left(\frac{\xi}{r}-\xi'\right)
(\vert\nabla u\vert^2-u_r^2)\\
\ &\ &+\left[\left(\frac{\xi}{r}-\xi'\right)\left(\frac{a(n-3)}{r}+a_r\right)-a\xi''\right]\frac{n-1}{4r}u^2
+\half(a\xi'-a_t-\xi a_r)\vert\nabla u\vert^2.\end{array}\]
\end{lem}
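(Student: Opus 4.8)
The identity (8.6) is a pointwise differential identity, so the plan is to verify it by a direct computation, expanding both sides and matching terms according to type. The left-hand side $M_\xi(u)(\partial_t^2u - \Div_x(a\nabla_x u))$ is a sum of products of the multiplier $M_\xi(u) = u_t + \xi u_r + \frac{\xi(n-1)}{2r}u$ with the wave operator applied to $u$; the right-hand side is $\partial_t X + \Div_x Y + Z$ with the stated $X$, $Y$, $Z$. I would organize the verification around the three pieces of the multiplier separately: the contribution of $u_t$, the contribution of $\xi u_r$, and the contribution of $\frac{\xi(n-1)}{2r}u$.

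\textbf{Step 1: the $u_t$-multiplier (energy identity).} First I would treat $u_t(\partial_t^2 u - \Div_x(a\nabla_x u))$. Writing $u_t\,\partial_t^2 u = \partial_t(\tfrac12 u_t^2)$ and $-u_t\Div_x(a\nabla_x u) = -\Div_x(a u_t\nabla u) + a\nabla u\cdot\nabla u_t = -\Div_x(a u_t\nabla u) + \partial_t(\tfrac12 a|\nabla u|^2) - \tfrac12 a_t|\nabla u|^2$, this produces exactly $\partial_t e(u) - \Div_x(a u_t\nabla u) - \tfrac12 a_t|\nabla u|^2$, which accounts for the $e(u)$ term in $X$, part of $Y$, and the $-\tfrac12 a_t|\nabla u|^2$ term in $Z$.

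\textbf{Step 2: the radial-derivative multiplier.} Next I would handle $\xi u_r(\partial_t^2 u - \Div_x(a\nabla_x u))$. Here $\xi u_r\,\partial_t^2 u = \partial_t(\xi u_t u_r) - \xi u_t\partial_t u_r = \partial_t(\xi u_t u_r) - \partial_t(\tfrac12\xi u_t^2)_r + \ldots$; more carefully one uses $\xi u_t u_{rt} = \frac{x}{2r}\cdot\nabla(\xi u_t^2) - (\text{terms with }\nabla\tfrac{x\xi}{2r})$, converting the time-derivative-of-$u_r$ term into a spatial divergence plus lower-order terms. The term $-\xi u_r\Div_x(a\nabla u)$ is the classical Morawetz/Friedrichs computation: integrating the radial vector field against the divergence produces $\Div_x\big(-a u_r\nabla u\,\xi + \tfrac{x\xi}{2r}a|\nabla u|^2\big)$ plus the positive-definite angular term $a(\tfrac{\xi}{r}-\xi')(|\nabla u|^2 - u_r^2)$ and the term $\tfrac12\xi' a|\nabla u|^2$ (coming from differentiating $\xi$). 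This is where the combination $a\xi' - a_t - \xi a_r$ in $Z$ and most of $Y$ originate.

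\textbf{Step 3: the zeroth-order multiplier and assembly.} Finally I would compute $\frac{\xi(n-1)}{2r}u(\partial_t^2 u - \Div_x(a\nabla u))$, which generates the $\xi u_t\frac{n-1}{r}u$ term in $X$, the $u^2$ terms in $Y$ (via $\Div$ of $u\nabla u$ and of $\nabla u^2$ against the weight $\frac{\xi(n-1)}{2r}$), and the coefficient $\big[(\tfrac{\xi}{r}-\xi')(\tfrac{a(n-3)}{r}+a_r) - a\xi''\big]\frac{n-1}{4r}u^2$ in $Z$ — this last term requires carefully expanding $\Delta(\tfrac{1}{r})$-type expressions and using $\Div_x(\tfrac{x}{r}) = \tfrac{n-1}{r}$. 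After assembling all three contributions, I would collect terms by type ($u_t^2$, $|\nabla u|^2$, $u_r^2$, $u^2$, cross terms) and check that the gradient and divergence pieces match $\partial_t X$ and $\Div_x Y$ exactly, with the remainder being precisely $Z$. I expect the main obstacle to be bookkeeping in Step 3: the second-derivative terms of the radial weight $\xi(r)/r$ and the interplay of $\xi''$, $\xi'$, $\xi/r$ with the factor $n-3$ are where sign errors are easy, and one must use the assumption $0 < r\xi' \le \xi$ only implicitly (the identity itself is purely algebraic and holds without (8.2)–(8.4); those hypotheses are used only afterward to conclude $Z \ge 0$). Once the identity is checked, the lemma is proved; no further argument is needed.
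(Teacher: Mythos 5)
Your proposal is correct and takes the same route as the paper, which simply states ``The proof of (8.5) is a direct calculation and we omit it''; your decomposition of the multiplier $M_\xi(u)$ into its three pieces ($u_t$, $\xi u_r$, and $\frac{\xi(n-1)}{2r}u$) and your observation that the identity is purely algebraic, with (8.2)--(8.4) used only afterward to get $Z\geq0$, are exactly the standard Morawetz-type computation the author has in mind. In fact your outline supplies more structure than the paper does.
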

The proof of (8.5) is a direct calculation and we omit it.\\
\textit{Proof of Theorem 8:} It suffices to consider only real-valued solutions of (1.1) with initial data $f\in{\CI}\times{\CI}$. Let $\mathcal{U}(t,s)f=(u(t,x),u_t(t,x))$. For each $t$ the function $u(t,x)$ has compact support with respect to $x$. The conditions (8.2), (8.3), (8.4) imply $Z\geq0$. Therefore integrating (8.2) and assuming $n\geq4$, we get
\[\int_{{\R}^n}\frac{\partial X}{\partial t}\textrm{d}x+\int_{{\R}^n}Z\textrm{d}x+\int_{{\R}^n}\Div(Y)\textrm{d}x=0.\]
On the other hand,
\[\int_{{\R}^n}\Div(Y)\textrm{d}x=\lim_{\delta\to0,R\to\infty} \int_{\delta\leq\vert x\vert\leq R}\Div_x(Y)(x)\textrm{d}x=\lim_{\delta\to0}-\int_{S^{n-1}}\delta^{n-1}Y(\delta x).x\textrm{d}\sigma(x)=0.\]
Also, since $u(t,x)$ has compact support with respect to $x$, we have
\begin{equation}\label{eq:hyp1}\int_{{\R}^n}\frac{\partial X}{\partial t}\textrm{d}x=\frac{\partial\left(\int_{{\R}^n} X\textrm{d}x\right)}{\partial t}.\end{equation}
The equality (8.5) implies
\[\frac{\partial\left(\int_{{\R}^n} X\textrm{d}x\right)}{\partial t}=-\int_{{\R}^n}Z\textrm{d}x\leq0.\]
We obtain
\[\int_{{\R}^n}X(t,x)\textrm{d}x\leq\int_{{\R}^n}X(s,x)\textrm{d}x.\]
For $X$ we use the representation
\[ X=(1-\frac{\xi}{\inf a})e(u)+\xi u^2\frac{(n-1)(n-3)}{8r^2}+\xi'u^2\frac{(n-1)}{4r}\]
\[ \ \ +\Div(-\xi u^2\frac{n-1}{4r^2}x)\]
\[\ \ \ \ \ \ +\xi\left[\frac{e(u)}{\inf a}+\frac{n-1}{r}uu_r+\frac{(n-1)^2}{8r^2}u^2+u_t(u_r+\frac{n-1}{2r}u)\right].\]
Now, since $\inf a\leq1$, it is easy to obtain the inequality
\[u_t(u_r+\frac{n-1}{2r}u)\geq-\half u_t^2-\half\vert u_r+\frac{n-1}{2r}u\vert^2\geq-\left(\frac{e(u)}{\inf a}+\frac{(n-1)}{2r}uu_r+\frac{(n-1)^2}{8r^2}u^2\right).\]
Consequently, the last term in $X$ is non-negative and
\[\int_{{\R}^n}X(t,x)\textrm{d}x\geq\int_{{\R}^n}(1-\frac{\xi}{\inf a})e(u)(t,x)\textrm{d}x\geq\frac{(1-\epsilon)}{2}c\Vert (u,u_t)(t)\Vert_{\B},\]
where $c,C$ are  constants independent of $f$ and $t$ satisfying\\
 for all
 $f=(f_1,f_2)\in{\B}$ and $t>0$
\[c\Vert f\Vert_{{\B}}\leq \int_{{\R}^n}(a(t,x)\vert\nabla f_1\vert^2+(f_2)^2)\textrm{d}x\leq C\Vert f\Vert_{\B}.\]
In the same way, writing $X$ in the form
\[ X=(1+\frac{\xi}{\inf a})e(u)-\xi u^2\frac{(n-1)(n-3)}{8r^2}-\xi'u^2\frac{(n-1)}{4r}\]
\[ \ \ +\Div(\xi u^2\frac{n-1}{4r^2}x)\]
\[\ \ \ \ \ \ -\xi\left[\frac{e(u)}{\inf a}+\frac{n-1}{r}uu_r+\frac{(n-1)^2}{8r^2}u^2+u_t(u_r+\frac{n-1}{2r}u)\right],\]
we conclude that 
\[\int_{{\R}^n}X(s,x)\textrm{d}x\leq\frac{(1+\epsilon)}{2}C\Vert (u,u_t)(s)\Vert_{\B}.\]
Thus, we obtain the estimate
\[\Vert (u,u_t)(t)\Vert_{\B}\leq\frac{C(1+\epsilon)}{c(1-\epsilon)}\Vert (u,u_t)(s)\Vert_{\B}.\]
For $n=3$ the term $Y$ has a singularity at $r=0$ and integrating over $0<\delta\leq\vert x\vert\leq R$, we get
\[\lim_{\delta\to0,\ R\to\infty}\int_{\delta\leq\vert x\vert\leq R}\Div Y\textrm{d}x=\lim_{\delta\to0,\ R\to\infty}\int_{\delta\leq\vert x\vert\leq R}-\Div(\frac{\xi(n-1)}{4r^3}xu^2)\textrm{d}x.\]
Thus, we obtain
\[\int_{{\R}^n}\Div(Y)\textrm{d}x=\frac{n-1}{4}\lim_{\delta\to0}\int_{\mathbb S^2}\delta^2\frac{\xi}{\delta^3}u^2(t,\delta x)\delta x.x\textrm{d}\sigma=\frac{(n-1)\pi}{2}\xi(0)u(t,0)^2\geq0.\]
This expression is non-negative.
Finally, for $t\geq s$ we obtain our result by an approximation with functions with compact support.
$\quad\quad\quad\quad\quad\quad\quad\quad\quad\quad\quad\quad\quad\quad\quad\quad\quad\quad\quad\quad\quad\quad\quad\quad\quad\quad\quad\quad\square$

Notice that with such a metric we have
\[\sigma(Z^b(T,0))\subset\{ z\in\mathbb{C}\ :\ \vert z\vert\leq1\}.\]
and we may have eigenvalues of $Z^b(T,0)$ lying in $\mathbb S^1$. We will find stronger condition on $a(t,x)$ to eliminate eigenvalues  of $Z^b(T,0)$ on $\mathbb S^1$.

\subsection{Exponential decay for the operator $Z$ associate to time periodic non-trapping metric}
The purpose of this subsection is to apply the results for non-trapping metric which does not depend on $t$ to construct a time-periodic non-trapping metric which satisfies condition (H2). Consider a non-trapping metric $a(t,x)$,  $T$-periodic in t with $T>0$ to be determined such that for $T_1\leq t\leq T$,\\
 $a(t,x)=a(T_1,x)$ with $T_1<T$. Set $a_1(x)=a(T_1,x)$. Consider the following problem
\begin{equation} \label{eq:2}  \left\{\begin{array}{c}
v_{tt}-\Div_{x}(a_1(x)\nabla_{x}v)=0,\\
(v,v_{t})(0)=f,\end{array}\right.\end{equation}
and the propagator
\[\mathcal{V}(t):{\B}\ni f\longmapsto (v,v_t)(t)\in{\B}\]
associate to problem (8.7). 

Let $u$ be the solution of the problem (1.1). For $T1\leq t\leq T$ we find
\[\partial_t^2 u -\Div_x(a_1(x)\nabla_x u)=\partial_t^2 u -\Div_x(a(t,x)\nabla_x u)=0.\]
Thus, for all $T_1\leq s<t\leq T$ we have
\[\mathcal{U}(t,s)=\mathcal{V}(t-s)\quad\textrm{and}\quad Z^\rho(T,0)=P_+^\rho\mathcal{V}(T-T_1)\mathcal{U}(T_1,0)P_-^\rho.\]
Proposition 2 implies that $(id_{\B}-P^\rho_-)\mathcal{U}(T_1,0)P^\rho_-=0$,  and we get
\[Z^\rho(T,0)=P^\rho_+\mathcal{V}(T-T_1)P^\rho_-\mathcal{U}(T_1,0)P^\rho_-.\]
Since $a_1(x)$ is a non-trapping metric  which does not depend on $t$ and $n\geq3$ is odd it was established ( see [31] and [32]) that
\[\Vert P^\rho_+\mathcal{V}(t)P^\rho_-\Vert_{\mathcal{L}({\B})}\leq Ce^{-\delta t}\]
 with $C,\delta>0$ independent on $t$. It follows
 \[\Vert Z^\rho(T,0)\Vert_{\mathcal{L}({\B})}\leq Ce^{-\delta(T-T_1)}\Vert \mathcal{U}(T_1,0)\Vert_{\mathcal{L}({\B})}.\]
 Hence, for $T$ large enough we get
 \[r\left(Z^\rho(T,0)\right)\leq\Vert Z^\rho(T,0)\Vert_{\mathcal{L}({\B})}<1\]
 where $r\left(Z^\rho(T,0)\right)$ is the radius spectrum of $Z^\rho(T,0)$.
 For such a metric $a(t,x)$, $Z^\rho(T,0)$ satisfies the condition (H2).

{\footnotesize
}

\end{document}